\tikzset{>=stealth'} 
\newcommand{\Ob}{\mathrm{Ob}}
\newcommand{\Ih}{\mathrm{Ih}}
\newcommand{\IK}{\mathrm{IK}}
\newcommand{\Core}{\mathrm{Core}}
\newcommand{\lcm}{\mathrm{lcm}}
\newcommand{\into}{\hookrightarrow}
\newcommand{\tto}{\longrightarrow}
\newcommand{\iso}{\overset{\simeq}{\tto}}
\numberwithin{equation}{section}
\numberwithin{figure}{section}
\newcommand\qedsymbol{\hbox{$\Box$}}
\newcommand\qed{\relax\ifmmode\Box\else
  {\unskip\nobreak\hfil\penalty50\hskip1em\null\nobreak\hfil\qedsymbol
  \parfillskip=\z@\finalhyphendemerits=0\endgraf}\fi}
\newenvironment{proof-of}[1][{}]{\par\noindent \textbf{Proof of} {#1}. }{\qed}
\newenvironment{proof}[0]{\par\noindent \textbf{Proof}.}{\qed}
\DeclareMathOperator*{\Gal}{Gal}
\DeclareMathOperator*{\Aff}{Aff}
\newcommand{\GT}{\mathsf{GT}}
\newcommand{\GTSh}{\mathsf{GTSh}}
\newcommand{\GTh}{\widehat{\mathsf{GT}}}
\newcommand{\Zhat}{\widehat{\mathbb{Z}}}
\newcommand{\F}{\mathsf{F}}
\newcommand{\NFI}{\mathsf{NFI}}
\newcommand{\Dih}{\mathsf{Dih}}
\newcommand{\PB}{\mathrm{PB}} 
\newcommand{\B}{\mathrm{B}} 
\renewcommand{\H}{\mathsf{H}} 
\newcommand{\K}{\mathsf{K}} 
\newcommand{\N}{\mathsf{N}} 
\newcommand{\PR}{\mathscr{PR}}
\newcommand{\End}{\mathsf{End}}
\newcommand{\Aut}{\mathrm{Aut}}
\newcommand{\ord}{\mathrm{ord}}
\newcommand{\isom}{\mathrm{isom}}
\newcommand{\opp}{\mathrm{opp}}
\newcommand{\conn}{\mathrm{conn}}
\newcommand{\ti}[1]{{\tilde{#1}}}
\newcommand{\wh}[1]{{\widehat{#1}}}
\newcommand{\ol}[1]{{\overline{#1}}}
\newcommand{\e}[1]{{\textbf{#1}}}
\newcommand{\txt}[1]{{\textrm{#1}}}
\newcommand{\hs}{\heartsuit}
\newcommand{\mF}{{\mathfrak{F}}}
\newcommand{\cF}{\mathcal{F}}
\newcommand{\cZ}{\mathcal{Z}}
\newcommand{\cP}{\mathcal{P}}
\newcommand{\cR}{\mathcal{R}}
\newcommand{\cX}{\mathcal{X}}
\newcommand{\ML}{{\mathcal{ML}}}
\newcommand{\hcP}{\widehat{\mathcal{P}}}
\newcommand{\PP}{{\mathbb P}}
\newcommand{\CC}{{\mathbb C}}
\newcommand{\ZZ}{{\mathbb Z}}
\newcommand{\QQ}{{\mathbb Q}}
\newcommand{\al}{{\alpha}}
\newcommand{\vro}{{\varrho}}
\newcommand{\si}{{\sigma}}
\newcommand{\ga}{{\gamma}}
\newcommand{\vf}{{\varphi}}
\newcommand{\varka}{{\varkappa}}
\newcommand{\te}{\theta}
\newcommand{\de}{{\delta}}
\newcommand{\D}{{\Delta}}
\renewcommand{\mod}{~\mathrm{mod}~}
\newcommand{\lan}{\langle\,}
\newcommand{\ran}{\,\rangle}
\date{}
\newtheorem{thm}{Theorem}[section]
\newtheorem{defi}[thm]{Definition}
\newtheorem{lem}[thm]{Lemma}
\newtheorem{cor}[thm]{Corollary}
\newtheorem{conj}[thm]{Conjecture}
\newtheorem{prop}[thm]{Proposition}
\newtheorem{remark}[thm]{Remark}
\title{Accessing non-abelian quotients of the Grothendieck-Teichmueller group via elementary tools}
\author{Ivan Bortnovskyi, Vasily A. Dolgushev, Borys Holikov, Vadym Pashkovskyi}
\date{}
\begin{document}

\large

\maketitle

\begin{flushright}
{\it To our teachers with admiration}
\end{flushright}

\bigskip

\begin{abstract}
Many challenging questions about the Grothendieck-Teichmueller group, $\GTh$, are motivated 
by the fact that this group receives the injective homomorphism (called the Ihara embedding) 
from the absolute Galois group, $G_{\mathbb{Q}}$, of rational numbers. Although the question about 
the surjectivity of the Ihara embedding is a very challenging problem, in this paper, we construct a 
family of finite non-abelian quotients of $\GTh$ that receive surjective homomorphisms 
from $G_{\QQ}$. We also assemble these finite quotients into an infinite (non-abelian) 
profinite quotient of $\GTh$. We prove that the natural homomorphism from $G_{\QQ}$
to the resulting profinite group is also surjective. We describe this profinite group explicitly as a clopen 
subgroup of index $2$ in the group of affine transformations associated to the ring of $2$-adic integers.
 
To achieve these goals, we used the groupoid $\GTSh$ of $\GT$-shadows for the gentle version of 
the Grothendieck-Teichmueller group. This groupoid  was introduced in the recent paper 
by the second author and J. Guynee and the set $\Ob(\GTSh)$ of objects of $\GTSh$ is a poset
of certain finite index normal subgroups of the Artin braid group on $3$ strands. We introduce 
a sub-poset $\Dih$ of $\Ob(\GTSh)$ related to the family of dihedral groups and call it the dihedral 
poset. We show that each element $\K$ of $\Dih$ is the only object of its connected 
component in $\GTSh$. Using the surjectivity of the cyclotomic character, we prove 
that, if the order of the dihedral group corresponding to $\K$ is a power of $2$, then the natural 
homomorphism from $G_{\QQ}$ to the finite group $\GTSh(\K, \K)$ is surjective.  
We introduce the Lochak-Schneps conditions on morphisms of $\GTSh$ and prove that 
each morphism of $\GTSh$ with the target $\K \in \Dih$ satisfies the Lochak-Schneps conditions. 
Finally, we conjecture that the natural homomorphism from $G_{\QQ}$ to the finite group 
$\GTSh(\K, \K)$ is surjective for every object $\K$ of the dihedral poset. 
\end{abstract}

\section{Introduction}
\label{sec:intro}
Many questions \cite{LochakSchneps-Open}, \cite{PopSurvey} about the profinite version $\GTh$ of the 
Grothendieck-Teichmueller group introduced in \cite[Section 4]{Drinfeld} are motivated 
by its link to the absolute Galois group $G_{\QQ} := \Gal(\ol{\QQ}/\QQ)$ of rational numbers. 
In his famous paper \cite{Ihara}, Y. Ihara constructed 
a group homomorphism 
\begin{equation}
\label{Ih-intro}
\Ih : G_{\QQ}  \to \GTh.
\end{equation}
Belyi's theorem \cite{Belyi} implies\footnote{See also Theorems 4.7.6, 4.7.7 and 
Fact 4.7.8 in \cite{Szamuely}.} that the homomorphism $\Ih$ is injective and we call 
it the Ihara embedding. Although the question of surjectivity of $\Ih$ (see \cite{Ihara-ICM}) is
a very challenging one, in this paper, we use rather elementary tools to 
produce examples of non-abelian quotients 
of $\GTh$ that receive surjective homomorphisms from $G_{\QQ}$.
More precisely, we construct a family of finite non-abelian quotients of 
$\GTh$ using the groupoid $\GTSh$ of $\GT$-shadows for 
the gentle version\footnote{As far as we know, the group $\GTh_{gen}$ 
was introduced in paper \cite{Harbater-Schneps} by D. Harbater and L. Schneps.
In their paper, the authors denote $\GTh_{gen}$ by $\GTh_0$.} 
$\GTh_{gen}$ of the Grothendieck-Teichmueller group $\GTh$
\cite{GTgentle}, \cite{JXthesis}. Using this family, we also construct 
an infinite non-abelian profinite quotient of $\GTh$ that receives a surjective 
homomorphism from $G_{\QQ}$. 

Let $\Zhat$ (resp. $\wh{\F}_2$) be the profinite completion of the ring $\ZZ$ 
(resp. the free group $\F_2$ on two generators). The group $\GTh_{gen}$ consists of 
pairs $(\hat{m}, \hat{f}) \in \Zhat \times \wh{\F}_2$ satisfying the hexagon relations 
(see equations (1.1) and (1.2) in \cite[Introduction]{GTgentle}) and the technical condition 
$\hat{f} \in [\wh{\F}_2, \wh{\F}_2]^{top. cl.}$.
For the definition of the multiplication in $\GTh_{gen}$, we refer the reader to 
\cite[Section 2]{GTgentle}, \cite[Section 1.1]{Leila-survey}, \cite[Section 3.1]{JXthesis}.

The original version $\GTh$ of the Grothendieck-Teichmueller group \cite[Section 4]{Drinfeld}
is a subgroup of $\GTh_{gen}$. Hence $\GTh_{gen}$ also receives an injective homomorphism 
from $G_{\QQ}$ and we use the same notation $\Ih$ for the homomorphism from $G_{\QQ}$ to 
$\GTh_{gen}$.

\subsection{The groupoid $\GTSh$ of $\GT$-shadows in a nutshell}
Let $\B_3$ be the Artin braid group \cite{Birman}, \cite{Braids} on $3$ strands:
$$
\B_3 := \lan \si_1, \si_2 ~|~  \si_1 \si_2 \si_1 =  \si_2 \si_1 \si_2 \ran 
$$
and $\PB_3$ be the kernel of the standard homomorphism $\rho$ from $\B_3$ to the symmetric 
group $S_3$ on $3$ letters. It is known \cite[Section 1.3]{Braids} that 
$$
\PB_3 \cong \lan  x_{12}, x_{23} \ran \times \lan c \ran,  
$$
where $x_{12} := \si_1^2$, $x_{23} := \si_2^2$ and $c : =  (\si_1 \si_2 \si_1)^2$.  
We identify the free group $\F_2$ on 
two generators with the subgroup $\lan  x_{12}, x_{23} \ran$ of $\PB_3$ generated 
by $x_{12}$ and $x_{23}$.

Just as in \cite{GTgentle}, we denote by $\GTSh$ the groupoid 
whose set  $\Ob(\GTSh)$ of objects is the poset 
$$
\NFI_{\PB_3}(\B_3) : = \{\N \unlhd \B_3 ~|~ \N \le \PB_3, ~~ |\PB_3: \N| < \infty \}.  
$$ 

For $\N \in \NFI_{\PB_3}(\B_3)$, we consider the finite set 
\begin{equation}
\label{ZZ-mod-Nord-F2-mod-NF2}
\ZZ/ N_{\ord} \ZZ ~\times~ \F_2 / \N_{\F_2}\,,
\end{equation}
where $\N_{\F_2} : =  \N \cap \lan  x_{12}, x_{23} \ran$ and 
$N_{\ord}$ is the least common multiple of the orders of the cosets 
$x_{12} \N$,  $x_{23} \N$ and $c \N$ in the finite group $\PB_3/\N$. 

We denote by $\GT(\N)$ the set of morphisms of the groupoid 
$\GTSh$ with the target $\N$. These are elements of the finite set 
\eqref{ZZ-mod-Nord-F2-mod-NF2} that satisfy the hexagon relations 
(see \eqref{eq:fhex}, \eqref{eq:shex}) modulo $\N$ and additional technical 
conditions. We call morphisms of the groupoid $\GTSh$ 
$\GT$-{\bf shadows}.

Let $(m, f)$ be a pair in $\ZZ \times \F_2$ that represents a $\GT$-shadow 
with the target $\N$. Hexagon relations \eqref{eq:fhex}, \eqref{eq:shex} imply that
the formulas 
$$
T_{m, f} (\si_1) := \si^{2m + 1}_1 \, \N, \qquad 
T_{m, f} (\si_2) := f^{-1} \si^{2m + 1}_2 f \, \N
$$
define a group homomorphism $T_{m, f} : \B_3 \to \B_3/\N$. 
It is convenient to denote by $[m, f]$ the element of $\GT(\N)$ represented 
by a pair $(m, f) \in \ZZ \times \F_2$.  

For $\K, \N \in \NFI_{\PB_3}(\B_3)$, the set $\GTSh(\K, \N)$ of morphisms in $\GTSh$ from 
$\K$ to $\N$ is 
$$
\GTSh(\K, \N) : = \{[m, f]  \in \GT(\N)~|~ \ker(T_{m, f}) = \K \}. 
$$ 

For the definition of the composition of morphisms in $\GTSh$, we refer the reader 
to Theorem \ref{thm:GTSh} of this paper (see also \cite[Theorem 3.10]{GTgentle}).  

The groupoid $\GTSh$ is highly disconnected. Indeed, due to Proposition 
\ref{prop:quotients-isomorphic}, if $\GTSh(\K, \N)$ is non-empty, then the quotient 
groups $\B_3/\N$ and $\B_3/\K$ are isomorphic. However, using the finiteness of the 
set $\GT(\N)$, it is easy to show that, for every $\N \in  \NFI_{\PB_3}(\B_3)$, the connected 
component $\GTSh_{\conn}(\N)$ of $\N$ in $\GTSh$ is a finite groupoid. 

It is certainly easier to work with a connected component of $\GTSh$ that 
has exactly one object. Thus, if $\N$ is the only object of its connected component 
$\GTSh_{\conn}(\N)$, then we say that $\N$ is an {\bf isolated object} of $\GTSh$. 
In this case, $\GT(\N) = \GTSh(\N, \N)$ and hence $\GT(\N)$ is a (finite) group.  

Let $\H, \K$ be elements of $\NFI_{\PB_3}(\B_3)$ such that $\H \le \K$. Furthermore, let 
$(m, f)$ be a pair in $\ZZ \times \F_2$ that represents a $\GT$-shadow with the target $\H$.
Due to \cite[Proposition 3.12]{GTgentle}, the same pair $(m, f)$ also represents a $\GT$-shadow
with the target $\K$ and we get a natural map 
$$
\cR_{\H, \K} : \GT(\H) \to \GT(\K). 
$$
It is not hard to show that, if $\H \le \K$ are isolated objects of $\GTSh$, then the map $\cR_{\H, \K}$
is a group homomorphism. In this paper, we call $\cR_{\H, \K}$ the {\bf reduction map} and, sometimes, 
the {\bf reduction homomorphism}. 

\subsection{A link between $\GTh_{gen}$ and the groupoid $\GTSh$}
\label{link}
For a group $G$ and a finite index normal subgroup $\N \unlhd G$, 
we denote by $\hcP_{\N}$ the standard group homomorphism 
$$
\hcP_{\N} : \wh{G} \to G/\N
$$
from the profinite completion $\wh{G}$ of $G$ to the finite group $G/\N$. Moreover, for a positive 
integer $N$, we set $\hcP_{N} : = \hcP_{N \ZZ}$, i.e. $\hcP_{N}$ is the standard ring
homomorphism from $\Zhat$ to the finite ring $\ZZ / N\ZZ$. 

Given $\N \in \NFI_{\PB_3}(\B_3)$ and $(\hat{m}, \hat{f}) \in \GTh_{gen}$, the pair 
$$
\big( \hcP_{N_{\ord}}(\hat{m}), \hcP_{\N_{\F_2}}(\hat{f}) \big)
$$
is a $\GT$-shadow with the target $\N$. In other words, the formula 
\begin{equation}
\label{PR-N}
\PR_{\N}(\hat{m}, \hat{f}) := \big( \hcP_{N_{\ord}}(\hat{m}), \hcP_{\N_{\F_2}}(\hat{f}) \big)
\end{equation} 
defines a natural map $\PR_{\N} :  \GTh_{gen} \to \GT(\N)$. If a $\GT$-shadow $[m,f] \in \GT(\N)$ 
belongs to the image of $\PR_{\N}$, then we say that $[m,f]$ is {\bf genuine}; otherwise 
$[m, f]$ is called {\bf fake}.

Due to \cite[Corollary 5.4]{GTgentle}, a $\GT$-shadow $[m, f] \in \GT(\N)$ is genuine if and only if 
$[m, f]$ belongs to the image of the reduction map $\cR_{\H, \N}$ for every $\H \in \NFI_{\PB_3}(\B_3)$ 
such that $\H \le \N$. At the time of writing, the authors do not know a single
example of a fake $\GT$-shadow.  

\begin{remark}  
\label{rem:ML}
Using the reduction maps, one can construct \cite[Section 5]{GTgentle} a functor $\ML$ from the subposet 
$$
\NFI^{isolated}_{\PB_3}(\B_3) \subset \NFI_{\PB_3}(\B_3)
$$
of isolated objects of the groupoid $\GTSh$ to the category of finite groups. Moreover, one can show 
\cite{GTgentle} that the natural group homomorphism $\GTh_{gen} \to \lim(\ML)$ is an isomorphism 
of (topological) groups. 
\end{remark}  
\begin{remark}  
\label{rem:WhatAre}
$\GT$-shadows for the original version of $\GTh$ \cite[Section 4]{Drinfeld}
were introduced in paper \cite{GTShadows}. Note that, in paper \cite{GTShadows}, 
the notation $\GTSh$ is used for the groupoid of $\GT$-shadows for $\GTh$ 
and the set of objects of this groupoid is $\NFI_{\PB_4}(\B_4)$. In this paper, 
$\GTSh$ denotes the groupoid of $\GT$-shadows for $\GTh_{gen}$ and, here, 
$\Ob(\GTSh) := \NFI_{\PB_3}(\B_3)$. 
\end{remark}

\subsection{The Ihara homomorphism and its version for $\GT$-shadows}
\label{sec:Ihara-Ih-N}

Recall \cite{SGA1}, \cite[Section 4.7]{Szamuely} that we have the short exact sequence
of profinite groups: 
\begin{equation}
\label{ses-pi1-GQQ}
1 \tto 
\pi_1\big(\PP^1_{\ol{\QQ}} -  \{0,1,\infty\}\big) 
\tto
\pi_1\big(\PP^1_{\QQ} - \{0,1,\infty\}\big) 
\tto G_{\QQ} \to 1,
\end{equation}
where $\pi_1\big(\PP^1_{\ol{\QQ}} -  \{0,1,\infty\}\big)$ (resp. 
$\pi_1\big(\PP^1_{\QQ} -  \{0,1,\infty\}\big)$) is the algebraic fundamental 
group of the variety $\PP^1_{\ol{\QQ}} -  \{0,1,\infty\}$ (resp. the variety 
$\PP^1_{\QQ} -  \{0,1,\infty\}$). 

In \cite[Section 1]{Ihara}, Y. Ihara constructed a splitting of \eqref{ses-pi1-GQQ} 
that gives us an action of $G_{\QQ}$ on $\wh{\F}_2$ of the form 
$$
g(x)  = x^{\chi(g)}, \qquad g(y)  = f_g^{-1} y^{\chi(g)} f_g, \qquad g \in G_{\QQ}\,,
$$
where $\chi: G_{\QQ} \to \wh{\ZZ}^{\times}$ is the cyclotomic character and the 
construction of the element $f_g \in \wh{\F}_2$ is described in great detail in \cite[Section 1.4]{Ihara}.
(See also \cite[Corollary 4.7.3]{Szamuely} and \cite[Example 4.7.4]{Szamuely}.)
 
It is known \cite[Section 4]{Drinfeld}, \cite[Section 1]{Ihara},  
\cite[Theorem 4.7.7]{Szamuely}, \cite[Fact 4.7.8]{Szamuely} that, 
\begin{itemize}
 
\item the formula
\begin{equation}
\label{Ih-formula}
\Ih(g) := \big(\, (\chi(g)-1)/2, f_g \big)  \in \wh{\ZZ} \times \wh{F}_2 
\end{equation}
defines a group homomorphism 
\begin{equation}
\label{GQ-to-GTh}
G_{\QQ} ~\to~ \GTh
\end{equation}
and 
 
\item Belyi's theorem \cite{Belyi} implies that homomorphism \eqref{GQ-to-GTh} is injective.
 
\end{itemize}
In this paper, we call the homomorphism in \eqref{GQ-to-GTh} the \e{Ihara embedding}.
Since $\GTh$ is a subgroup of $\GTh_{gen}$, formula \eqref{Ih-formula}
also defines an injective homomorphism $G_{\QQ} \into \GTh_{gen}$. We use the same notation 
$\Ih$ for the injective homomorphism $G_{\QQ} \into \GTh_{gen}$.  

Recall \cite[Remark 2.9]{GTgentle} that the formula
\begin{equation}
\label{chi-virtual}
\chi_{vir}(\hat{m}, \hat{f}) : = 2 \hat{m} + 1 
\end{equation}
defines a group homomorphism $\chi_{vir} : \GTh_{gen} \to \wh{\ZZ}^{\times}$
and it is called the \e{virtual cyclotomic character}. 

\begin{remark}
\label{rem:chi-virtual-onto}
Using \eqref{Ih-formula} and \eqref{chi-virtual}, it is easy to see that the diagram 
\begin{equation}
\label{chi-chi-virtual}
\begin{tikzpicture}
\matrix (m) [matrix of math nodes, row sep=1.5em, column sep=1.5em]
{G_{\QQ}  &  ~ & \GTh_{gen} \\
~ &  ~~\wh{\ZZ}^{\times} &~ \\};
\path[->, font=\scriptsize]
(m-1-1) edge node[above] {$\Ih$} (m-1-3)
edge node[left] {$\chi~$} (m-2-2)  
(m-1-3) edge node[right] {$~\chi_{vir}$} (m-2-2);
\end{tikzpicture}
\end{equation}
commutes. Hence the surjectivity of the cyclotomic character implies 
the surjectivity of the virtual cyclotomic character 
$\chi_{vir} : \GTh_{gen} \to \wh{\ZZ}^{\times}$. 
As far as we know, this is the only way to prove the 
surjectivity of $\chi_{vir}$. See also the comment made by P. Guillot right 
before Lemma 3 in \cite{Guillot-dihedral}. 
\end{remark}

\subsubsection{The version of the Ihara homomorphism for $\GT$-shadows}
\label{sec:Ih-N}
Recall that, for every isolated object $\N$ of the groupoid $\GTSh$, the set 
$\GT(\N)$ is a finite group (in fact, $\GT(\N) = \GTSh(\N, \N)$) and the 
formula in \eqref{PR-N} defines a group homomorphism 
$\PR_{\N} :  \GTh_{gen} \to \GT(\N)$. Using the formula for 
the composition of morphisms in $\GTSh$ (see \eqref{composition}), it is easy 
to see that the equation
\begin{equation}
\label{chi-vir-N}
\chi_{vir, \N} ([m, f]) : = 2 m + 1 + N_{\ord} \ZZ
\end{equation}
defines a group homomorphism $\chi_{vir, \N} : \GT(\N) \to (\ZZ/N_{\ord} \ZZ)^{\times}$. 
Moreover, the homomorphism $\chi_{vir, \N}$ fits into the following commutative diagram: 
\begin{equation}
\label{chi-vir-chi-vir-N}
\begin{tikzpicture}
\matrix (m) [matrix of math nodes, row sep=1.5em, column sep=1.5em]
{ \GTh_{gen}  &  \GT(\N) \\
\wh{\ZZ}^{\times} &  ~(\ZZ/N_{\ord} \ZZ)^{\times} \\};
\path[->, font=\scriptsize]
(m-1-1) edge node[above] {$\PR_{\N}$} (m-1-2)
edge node[left] {$\chi_{vir}$} (m-2-1)  
(m-1-2) edge node[right] {$\chi_{vir, \N}$} (m-2-2)
(m-2-1) edge node[above] {$\hcP_{N_{\ord}}$}  (m-2-2);
\end{tikzpicture}
\end{equation}
Thus, since the homomorphisms $\chi_{vir}$ and 
$\hcP_{N_{\ord}} : \wh{\ZZ}^{\times} \to (\ZZ/N_{\ord} \ZZ)^{\times}$
are surjective, we conclude that the homomorphism $\chi_{vir, \N}$
is also surjective. 

Composing $\PR_{\N}$ with the Ihara embedding $\Ih : G_{\QQ} \into  \GTh_{gen}$ we 
get the following version of the Ihara homomorphism 
\begin{equation}
\label{Ih-N}
\Ih_{\N} : = \PR_{\N} \circ \Ih :  G_{\QQ} \to \GT(\N)
\end{equation}
for $\GT$-shadows. 

A $\GT$-shadow $[m, f] \in \GT(\N)$ that belongs to the image of $\Ih_{\N}$ is 
called\footnote{This term was suggested to us by Florian Pop.} \e{arithmetical}.
It is clear that, for every isolated object $\N$ of the groupoid $\GTSh$, the subset
of arithmetical $\GT$-shadows with the target $\N$ form a subgroup $\GT(\N)$ 
and we denote this group by $\GT_{arith}(\N)$, i.e. 
\begin{equation}
\label{GT-arith-N}
\GT_{arith}(\N) : = \Ih_{\N}(G_{\QQ}).
\end{equation}
It is also clear that every arithmetical $\GT$-shadow is genuine. 
Moreover, the existence of genuine $\GT$-shadows that are not arithmetical 
would imply that the Ihara homomorphism $\Ih : G_{\QQ} \to \GTh_{gen}$ 
is not surjective.  

Combining \eqref{chi-chi-virtual} with \eqref{chi-vir-chi-vir-N} we see that, 
for every isolated object $\N$ of $\GTSh$, the homomorphism $\Ih_{\N}$ and 
$\chi_{vir, \N}$ fit into the following commutative diagram: 
\begin{equation}
\label{Ih-N-chi-vir-N}
\begin{tikzpicture}
\matrix (m) [matrix of math nodes, row sep=1.5em, column sep=1.5em]
{ G_{\QQ}  &  \GT(\N) \\
\wh{\ZZ}^{\times} &  ~(\ZZ/N_{\ord} \ZZ)^{\times} \\};
\path[->, font=\scriptsize]
(m-1-1) edge node[above] {$\Ih_{\N}$} (m-1-2)
edge node[left] {$\chi$} (m-2-1)  
(m-1-2) edge node[right] {$\chi_{vir, \N}$} (m-2-2)
(m-2-1) edge node[above] {$\hcP_{N_{\ord}}$}  (m-2-2);
\end{tikzpicture}
\end{equation}
\begin{remark}  
\label{rem:Ih-N-for-any-N}
One may certainly consider the composition 
$$
\Ih_{\N} : =  \PR_{\N} \circ \Ih : G_{\QQ} \to \GT(\N)
$$
even if the object $\N$ is not isolated. In this case, $\GT(\N)$ does not have a natural 
group structure, so $\Ih_{\N}$ is not a group homomorphism. It does make
sense to call a $\GT$-shadow $[m, f] \in \GT(\N)$ 
\e{arithmetical} (for an arbitrary $\N \in \NFI_{\PB_3}(\B_3)$) if $[m, f]$ belongs 
to the image of $\Ih_{\N}$. We also set $\GT_{arith}(\N) := \Ih_{\N}(G_{\QQ})$
and $\GT_{arith}(\N)$ is merely a subset of $\GT(\N)$.  
\end{remark}
\begin{remark}  
\label{rem:cR-H-N-and-Ihara}
Let $\H, \N \in \NFI_{\PB_3}(\B_3)$ with $\H \subset \N$.
Since the reduction map $\cR_{\H, \N}$ satisfies 
the property $\cR_{\H, \N} \circ \PR_{\H} = \PR_{\N}$, the diagram 
$$
\begin{tikzpicture}
\matrix (m) [matrix of math nodes, row sep=1.5em, column sep=2.1em]
{G_{\QQ}  &   \GT(\H) \\
~~ & \GT(\N)   \\};
\path[->, font=\scriptsize]
(m-1-1) edge node[above] {$\Ih_{\H}$} (m-1-2)
edge node[left] {$~~~~\Ih_{\N}$} (m-2-2)  
(m-1-2) edge node[right] {$\cR_{\H, \N}$} (m-2-2);
\end{tikzpicture}
$$
commutes. Therefore, 
\begin{equation}
\label{cR-H-N-to-arithmetical}
\cR_{\H, \N} \big( \GT_{arith}(\H) \big)  =  \GT_{arith}(\N). 
\end{equation}
\end{remark}  

\subsection{The Lochak-Schneps conditions on $\GT$-shadows}
\label{sec:LS-conditions}

Using the cohomological interpretation of the defining relations of the Gro\-then\-dieck-Teich\-muel\-ler group,
P. Lochak and L. Schneps proved in \cite{LochakSchneps-CohomInt} the following remarkable statement: 
\begin{thm}[Introduction, \cite{LochakSchneps-CohomInt}]  
\label{thm:LS-conditions-hat}
Let $\te$ and $\tau$ be the automorphisms of $\wh{\F}_2$ defined in \eqref{theta} and 
\eqref{tau}, respectively. For every $(\hat{m}, \hat{f}) \in \GTh_{gen}$, there exists 
$\hat{g} \in \wh{\F}_2$ and $\hat{h} \in \wh{\F}_2$ such that 
\begin{equation}
\label{LS-conditions-hat}
\hat{f} = \te(\hat{g})^{-1} \hat{g}, \qquad 
\hat{f} x^{\hat{m}} =
\begin{cases}
\tau(\hat{h})^{-1} \hat{h}     & \textrm{if} ~~  2 \hat{m} +1  \equiv 1 \mod 3,   \\[0.18cm]
\tau(\hat{h})^{-1} x y \hat{h}     &  \textrm{if} ~~  2 \hat{m} +1  \equiv -1 \mod 3.
\end{cases} 
\end{equation}
Recall that $2\hat{m}+1$ is a unit in the ring $\Zhat$ and hence its image in 
$\ZZ/3\ZZ$ is either $\ol{1}$ or $-\ol{1}$. 
\end{thm}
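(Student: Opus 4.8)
The plan is to translate the two defining (hexagon) relations of $\GTh_{gen}$ into $1$-cocycle conditions for two finite cyclic groups acting on $\wh{\F}_2$, and then to compute the resulting nonabelian $H^1$ pointed sets. Write $\lan\te\ran\cong\ZZ/2\ZZ$ and $\lan\tau\ran\cong\ZZ/3\ZZ$ for the cyclic subgroups of $\Aut(\wh{\F}_2)$ generated by the automorphisms of \eqref{theta} and \eqref{tau}. One of the hexagon relations reads $\te(\hat f)\,\hat f = 1$, which is exactly the $1$-cocycle condition for $\hat f$ with respect to $\lan\te\ran$; the other, after the substitution $\hat\varphi := \hat f\, x^{\hat m}$, takes the cyclic form $\tau^2(\hat\varphi)\,\tau(\hat\varphi)\,\hat\varphi = 1$ (since $\tau$ cyclically permutes $x$, $y$, and $z := y^{-1}x^{-1}$), which is exactly the $1$-cocycle condition for $\hat\varphi$ with respect to $\lan\tau\ran$. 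Under this dictionary the asserted identity $\hat f = \te(\hat g)^{-1}\hat g$ is the vanishing of the class $[\hat f]$ in $H^1(\lan\te\ran,\wh{\F}_2)$, while $\hat\varphi = \tau(\hat h)^{-1}\hat h$ (resp. $\hat\varphi = \tau(\hat h)^{-1}(xy)\hat h$) is the statement that $[\hat\varphi]$ in $H^1(\lan\tau\ran,\wh{\F}_2)$ is trivial (resp. equals the class represented by the $1$-cocycle $xy$). So it suffices to prove: $H^1(\lan\te\ran,\wh{\F}_2)$ is a point; $H^1(\lan\tau\ran,\wh{\F}_2)$ is a two-element pointed set whose non-base point is represented by $xy$; and $[\hat\varphi]$ is trivial exactly when $2\hat m+1\equiv 1\mod 3$.

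The two cohomology computations rest on identifying the semidirect products geometrically. Since $\lan\te\ran$ and $\lan\tau\ran$ are finite, $\wh{\F}_2\rtimes\lan\te\ran$ and $\wh{\F}_2\rtimes\lan\tau\ran$ are the profinite completions of the discrete groups $\F_2\rtimes\ZZ/2\ZZ$ and $\F_2\rtimes\ZZ/3\ZZ$, and these are orbifold fundamental groups: the quotient of $\PP^1-\{0,1,\infty\}$ by the puncture-transposition has two punctures and one $\ZZ/2\ZZ$ cone point, so $\F_2\rtimes\ZZ/2\ZZ\cong\ZZ*\ZZ/2\ZZ$; the quotient by the $3$-cycle of punctures has one puncture and two $\ZZ/3\ZZ$ cone points, so $\F_2\rtimes\ZZ/3\ZZ\cong\ZZ/3\ZZ*\ZZ/3\ZZ$ (the latter group being the index-$2$ subgroup of $\mathrm{PSL}_2(\ZZ)=\B_3/\lan c\ran$ lying over the $3$-cycle subgroup of $S_3=\B_3/\PB_3$). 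Hence $\wh{\F}_2\rtimes\lan\te\ran\cong\Zhat\amalg\ZZ/2\ZZ$ and $\wh{\F}_2\rtimes\lan\tau\ran\cong\ZZ/3\ZZ\amalg\ZZ/3\ZZ$ as free profinite products. Now invoke the classification of torsion in free profinite products of finitely many profinite groups (the profinite Kurosh subgroup theorem; work of Herfort--Ribes, Haran, and Zalesskii): every finite subgroup is conjugate into a free factor. For $\Zhat\amalg\ZZ/2\ZZ$ this forces every involution lying over the generator of $\lan\te\ran$ to be conjugate to the generator of the $\ZZ/2\ZZ$ factor, and a short double-coset count (using that $\wh{\F}_2$ has index $2$) shows all such involutions are $\wh{\F}_2$-conjugate; hence $H^1(\lan\te\ran,\wh{\F}_2)$ is a point and $\hat f$ is automatically a coboundary. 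For $\ZZ/3\ZZ\amalg\ZZ/3\ZZ$ the order-$3$ elements lying over $\tau$ are conjugate into the two factors; exactly one torsion element of each factor maps to $\tau$, these two are non-conjugate because they lie in distinct free factors, and each of the two ambient conjugacy classes is a single $\wh{\F}_2$-conjugacy class (again a double-coset count, now using index $3$); hence $H^1(\lan\tau\ran,\wh{\F}_2)$ has exactly two elements, and since $xy$ is a non-coboundary $1$-cocycle it represents the non-base point.

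To decide which of the two classes contains $\hat\varphi=\hat f\,x^{\hat m}$, I pass to the abelianization, and this is where the dichotomy modulo $3$ arises. The natural map $H^1(\lan\tau\ran,\wh{\F}_2)\to H^1(\lan\tau\ran,\wh{\F}_2^{ab})$ is injective on the two-element set in question, and $H^1(\lan\tau\ran,\Zhat^2)\cong\Zhat^2/(1-\tau)\Zhat^2\cong\ZZ/3\ZZ$, where $\tau$ acts on $\Zhat^2=\Zhat x\oplus\Zhat y$ by $x\mapsto y\mapsto -x-y\mapsto x$. Since $\hat f\in[\wh{\F}_2,\wh{\F}_2]^{top.cl.}$ dies in $\wh{\F}_2^{ab}$, the image of $[\hat\varphi]$ is the class of $\hat m\,x$; in $\Zhat^2/(1-\tau)\Zhat^2$ this class is trivial exactly when $\hat m\in 3\Zhat$, i.e. when $2\hat m+1\equiv 1\mod 3$, and it equals the class of $xy$ (which is $2\bar x$, since $\bar y=\bar x$ in the quotient) exactly when $\hat m\equiv 2\mod 3$, i.e. when $2\hat m+1\equiv -1\mod 3$. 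These are precisely the two cases of the statement, and unwinding the coboundary formulas produces the required $\hat g$ and $\hat h$.

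I expect the main obstacle to be the orbifold bookkeeping behind the two structural identifications: determining the number and orders of the cone points, and tracking how the three puncture-loops of $\PP^1-\{0,1,\infty\}$ are identified in the quotient, since this is exactly what governs whether $H^1(\lan\tau\ran,\wh{\F}_2)$ has one or two elements and hence whether the $\mod 3$ dichotomy occurs at all. Further care is needed in passing from discrete free products to free profinite products and in the precise form of the profinite Kurosh theorem invoked, and in matching the sign and ordering conventions of \eqref{theta}, \eqref{tau} and of the hexagon relations so that the cocycle and coboundary formulas come out verbatim as stated --- in particular, so that the non-trivial class is normalized by the $1$-cocycle $xy$ rather than, say, $yx$ or $z$.
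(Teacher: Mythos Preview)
Your sketch is essentially correct and follows exactly the cohomological argument of Lochak and Schneps that the paper cites. Note, however, that the paper does \emph{not} supply its own proof of this theorem: it is stated as a result from \cite{LochakSchneps-CohomInt}, with Remark~\ref{rem:LS-proof} pointing out that the original argument (given on page~582 of that reference) applies verbatim to $\GTh_{gen}$ and not only to $\GTh$. So there is nothing in the paper to compare your argument against beyond the citation itself.

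Your reconstruction of the Lochak--Schneps method is accurate: the two simplified hexagon relations (Proposition~\ref{prop:simple-hexa}) are precisely the $1$-cocycle conditions for $\hat f$ under $\lan\te\ran$ and for $\hat f\,x^{\hat m}$ under $\lan\tau\ran$; the structural identifications $\F_2\rtimes\lan\te\ran\cong\ZZ*\ZZ/2\ZZ$ and $\F_2\rtimes\lan\tau\ran\cong\ZZ/3\ZZ*\ZZ/3\ZZ$ are correct (the latter is the index-$2$ subgroup of $\mathrm{PSL}_2(\ZZ)$, as you note); the profinite torsion theorem of Herfort--Ribes then gives $|H^1(\lan\te\ran,\wh{\F}_2)|=1$ and $|H^1(\lan\tau\ran,\wh{\F}_2)|=2$; and your abelianization computation correctly singles out the class via $\hat m\bmod 3$. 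One small point of bookkeeping: the paper's simplified second hexagon relation \eqref{s-hex11} is written for $y^m f$ rather than $f x^m$, but the two are equivalent cocycles (one checks directly that $\tau^2(fx^m)\tau(fx^m)(fx^m)=1$ follows from \eqref{s-hex11}), so your substitution $\hat\varphi=\hat f\,x^{\hat m}$ is legitimate and matches the form of the theorem's conclusion.
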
  
\begin{remark}  
\label{rem:LS-proof}
The statement we formulated here was proved for an arbitrary element 
of the original Grothendieck-Teichmueller group $\GTh$. However, the argument 
given on page 582 in \cite{LochakSchneps-CohomInt} works for any element of $\GTh_{gen}$. 
\end{remark} 

Theorem \ref{thm:LS-conditions-hat} gives us a useful tool for detecting 
fake $\GT$-shadows:
\begin{cor}  
\label{cor:LS-conditions}
Let $\N \in \NFI_{\PB_3}(\B_3)$ for which $3 \mid N_{\ord}$. 
If a $\GT$-shadow $[m, f] \in \GT(\N)$ is genuine, then there exist 
elements $g \in \F_2$ and  $h \in \F_2$ such that 
\begin{equation}
\label{LS-conditions-corollary}
f \,\N_{\F_2} = \te(g)^{-1}  g \, \N_{\F_2} \qquad 
f x^{m}\,\N_{\F_2} =
\begin{cases}
\tau(h)^{-1} h \, \N_{\F_2}   & \textrm{if} ~~  2 m +1  \equiv 1 \mod 3,   \\[0.18cm]
\tau(h)^{-1} x y h \, \N_{\F_2}    &  \textrm{if} ~~  2 m +1  \equiv -1 \mod 3.
\end{cases} 
\end{equation}
Equivalently, a $\GT$-shadow $[m, f] \in \GT(\N)$ is fake if the first equation 
or the second equation in \eqref{LS-conditions-corollary} cannot be solved 
for $g$ or $h$, respectively.
\end{cor}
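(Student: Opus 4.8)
The plan is to reduce the statement directly to Theorem \ref{thm:LS-conditions-hat} by lifting a genuine $\GT$-shadow to an honest element of $\GTh_{gen}$ and then pushing the Lochak--Schneps identities from $\wh{\F}_2$ down to the finite quotient $\F_2/\N_{\F_2}$. First I would unwind the definition of ``genuine'': since $[m,f]\in\GT(\N)$ lies in the image of $\PR_{\N}$, there is a pair $(\hat{m},\hat{f})\in\GTh_{gen}$ with $\PR_{\N}(\hat{m},\hat{f})=[m,f]$, i.e. $\hcP_{N_{\ord}}(\hat{m})=m+N_{\ord}\ZZ$ and $\hcP_{\N_{\F_2}}(\hat{f})=f\,\N_{\F_2}$. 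In particular $\hat{m}\equiv m \mod N_{\ord}$, and since $3\mid N_{\ord}$ by hypothesis we get $2\hat{m}+1\equiv 2m+1 \mod 3$. This is precisely where the assumption $3\mid N_{\ord}$ enters: it guarantees that the case distinction in \eqref{LS-conditions-corollary} is determined by the data $[m,f]\in\GT(\N)$ and coincides with the case distinction of \eqref{LS-conditions-hat}.

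Next I would apply Theorem \ref{thm:LS-conditions-hat} to $(\hat{m},\hat{f})$, obtaining $\hat{g},\hat{h}\in\wh{\F}_2$ satisfying \eqref{LS-conditions-hat}. To descend the first identity, consider $\Psi:\wh{\F}_2\to\wh{\F}_2$, $\Psi(u):=\te(u)^{-1}u$. Because $\te$ is a continuous automorphism of $\wh{\F}_2$ given by word formulas — in particular it restricts to an automorphism of the discrete free group $\F_2$ — the map $\Psi$ is continuous and carries $\F_2$ into $\F_2$. Hence $\hcP_{\N_{\F_2}}\circ\Psi:\wh{\F}_2\to\F_2/\N_{\F_2}$ is continuous with finite discrete target, so the fiber over $\hcP_{\N_{\F_2}}\big(\Psi(\hat{g})\big)$ is open and contains $\hat{g}$; by density of $\F_2$ in $\wh{\F}_2$ we may choose $g\in\F_2$ in this fiber. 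Then $\te(g)^{-1}g\in\F_2$ and
\[
\te(g)^{-1}g\,\N_{\F_2}=\hcP_{\N_{\F_2}}\big(\Psi(g)\big)=\hcP_{\N_{\F_2}}\big(\Psi(\hat{g})\big)=\hcP_{\N_{\F_2}}(\hat{f})=f\,\N_{\F_2},
\]
which is the first equation of \eqref{LS-conditions-corollary}. Note that this argument uses only the images in $\F_2/\N_{\F_2}$, so it does not require $\te$ to preserve $\N_{\F_2}$.

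For the second identity I would first record that $\hcP_{\N_{\F_2}}(x^{\hat{m}})=x^{m}\,\N_{\F_2}$: the inclusion $\F_2\hookrightarrow\PB_3$ induces an embedding $\F_2/\N_{\F_2}\hookrightarrow\PB_3/\N$, so the order of $x\,\N_{\F_2}$ equals the order of $x\,\N$ in $\PB_3/\N$, which divides $N_{\ord}$; since $\hat{m}\equiv m\mod N_{\ord}$ this gives $(x\,\N_{\F_2})^{\hat{m}}=(x\,\N_{\F_2})^{m}$, whence $\hcP_{\N_{\F_2}}(\hat{f}x^{\hat{m}})=fx^{m}\,\N_{\F_2}$. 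Now, according to which case of \eqref{LS-conditions-hat} occurs — which, as observed above, is the same as the case of \eqref{LS-conditions-corollary} — I would repeat the argument of the previous paragraph with $\Psi$ replaced by the continuous map $u\mapsto\tau(u)^{-1}u$, respectively $u\mapsto\tau(u)^{-1}xy\,u$ (both restricting to maps $\F_2\to\F_2$, since $\tau$ is a continuous automorphism of $\wh{\F}_2$ restricting to $\F_2$), evaluated at $\hat{h}$, to produce $h\in\F_2$ satisfying the second equation of \eqref{LS-conditions-corollary}. Finally, the ``equivalently'' statement is the contrapositive of what has been shown: if either displayed equation of \eqref{LS-conditions-corollary} admits no solution $g$ or $h$ in $\F_2$, then $[m,f]$ cannot be genuine and is therefore fake. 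The whole proof is a fairly formal reduction to Theorem \ref{thm:LS-conditions-hat}; the only points that need attention are the alignment of the $\mod 3$ cases (where $3\mid N_{\ord}$ is used) and the identity $\hcP_{\N_{\F_2}}(x^{\hat{m}})=x^{m}\,\N_{\F_2}$, and I do not anticipate a real obstacle beyond this bookkeeping.
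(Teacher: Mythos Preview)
Your proof is correct and is precisely the reduction to Theorem \ref{thm:LS-conditions-hat} that the paper intends; the paper does not spell out a proof of this corollary, but your argument—lifting to $(\hat{m},\hat{f})\in\GTh_{gen}$, applying \eqref{LS-conditions-hat}, and using density of $\F_2$ in $\wh{\F}_2$ to replace $\hat{g},\hat{h}$ by elements of $\F_2$—is the natural way to fill in the details. The two bookkeeping points you flag (that $3\mid N_{\ord}$ aligns the $\bmod\ 3$ cases, and that $\ord(x\,\N_{\F_2})\mid N_{\ord}$ so $\hcP_{\N_{\F_2}}(x^{\hat m})=x^{m}\,\N_{\F_2}$) are exactly what is needed, and your treatment of them is sound.
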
  
\begin{remark}  
\label{rem:divisible-by-3}
The technical condition in Corollary \ref{cor:LS-conditions}, $3 \mid N_{\ord}$, is not very 
restrictive: one can show that for every $\N \in \NFI_{\PB_3}(\B_3)$ there exists  
$\K \in \NFI_{\PB_3}(\B_3)$ such that $\K \le \N$ and $3 \mid K_{\ord}$. 
If $3 \nmid N_{\ord}$ and $[m, f] \in \GT(\N)$, then one may still try to use 
Corollary \ref{cor:LS-conditions} to test whether $[m, f]$ is fake 
as follows: 

\begin{enumerate}

\item if $[m, f]$ does not belong to the image of the reduction map 
$\cR_{\K, \N} : \GT(\K) \to \GT(\N)$, then $[m, f]$ is fake (see \cite[Corollary 5.4]{GTgentle});

\item if the first equation or the second equation in \eqref{LS-conditions-corollary} cannot be 
solved for every $\GT$-shadow in $\cR_{\K, \N}^{-1} ([m, f]) \subset \GT(\K)$, then 
the $\GT$-shadow $[m, f]$ is fake. 

\end{enumerate}
\end{remark}
Let $[m, f] \in \GT(\N)$ with $3 \mid N_{\ord}$. If there exist 
elements $g \in \F_2$ and $h \in \F_2$ such that 
\eqref{LS-conditions-corollary} holds, then we say that
the $\GT$-shadow $[m, f] \in \GT(\N)$ satisfies the \e{Lochak-Schneps conditions}.

\begin{remark}  
\label{rem:complex-conjugation}
Due to \cite[Theorem 1]{LochakSchneps-CohomInt}, the image of the complex conjugation
with respect to the Ihara embedding is the pair\footnote{It should be mentioned that P. Lochak and L. Schneps 
use $2 \hat{m} + 1$ instead of $\hat{m}$. Luckily, $2(-1) + 1 = -1$.} $(-1_{\Zhat},1_{\wh{\F}_2})$. Thus the pairs 
$(0,1)$ and $(-1,1)$ represent arithmetical (and hence genuine) $\GT$-shadows with a target $\N$ for 
every $\N \in \NFI_{\PB_3}(\B_3)$. The pair $(0, 1)$ represents the $\GT$-shadow $\Ih_{\N}(1_{G_{\QQ}}) \in \GT(\N)$
and the pair $(-1, 1)$ represents the image of the complex conjugation with respect to the map
$\Ih_{\N} : G_{\QQ} \to \GT(\N)$.
\end{remark}

\subsection{Organization of the paper}
\label{sec:organization}
In Section \ref{sec:GTSh}, we give a brief reminder of the groupoid $\GTSh$ of $\GT$-shadows \cite{GTgentle}
for the gentle version $\GTh_{gen}$ of the Grothendieck-Teichmueller group. 
In Section \ref{sec:Dih}, we consider the family of group homomorphisms 
$\psi_n : \PB_3 \to D_n \times D_n \times D_n$ defined by the formulas 
$$
\psi_n(x_{12}) := (r, s, s), \qquad 
\psi_n(x_{23}) := (rs, r, rs), \qquad \psi_n(c) := (1,1,1),
$$
where $D_n$ is the dihedral group $\lan r , s ~|~ r^n, ~s^2,~ rsrs^{-1} \ran$ of order $2 n$ and 
$n \in \ZZ_{\ge 3}$. Due to Proposition \ref{prop:K-n-in-NFI}, the subgroup $\K^{(n)} := \ker(\psi_n)$
is an element of the poset $\NFI_{\PB_3}(\B_3)$. So we set 
$$
\Dih := \{\K^{(n)} ~|~ n \in \ZZ_{\ge 3} \}
$$
and call $\Dih$ the \e{dihedral poset}. 

In Section \ref{sec:GT-Kn}, we describe 
the set $\GT(\K)$ for an arbitrary element $\K \in \Dih$. This description is
presented in Theorem \ref{thm:GT-Kn-set}. Due to the same theorem, every $\K \in \Dih$ 
is the only object of its connected component in the groupoid $\GTSh$.
Therefore, for every $\K \in \Dih$, $\GT(\K) = \GTSh(\K, \K)$ and $\GT(\K)$ is a (finite) group.
In this section, we also prove that the reduction homomorphism 
$\cR_{\H, \K}: \GT(\H) \to \GT(\K)$ is surjective for every pair $\H, \K \in \Dih$ with $\H \le \K$ 
(see Theorem \ref{thm:GTKn-redmap-strong}) and give an explicit description of the (finite) group 
$\GT(\K)$ for every $\K \in \Dih$ (see Proposition \ref{prop:pair-multiplication} and 
Theorem \ref{thm:GT-Kn-group}).  

Section \ref{sec:arithmetical} contains the main result of this paper (see Theorem \ref{thm:main}).
First, we prove that all $\GT$-shadows with the target $\K \in \Dih$ satisfy the Lochak-Schneps conditions
(see Theorem \ref{thm:LS-conditions}). Second, Theorem \ref{thm:main} gives an explicit bound on the number 
of arithmetical $\GT$-shadows with the target $\K \in \Dih$. We use this bound to prove that, for every $\al \in \ZZ_{\ge 2}$, 
the version of the Ihara homomorphism $\Ih_{\K^{(2^{\al})} } : G_{\QQ} \to \GT(\K^{(2^{\al})})$ is surjective. 
Finally, we construct an explicit example of a non-abelian (infinite) profinite 
quotient of $\GTh$ that receives the surjective homomorphism from $G_{\QQ}$ 
(see Theorem \ref{thm:infinite-quotient} and Proposition \ref{prop:lim-ML-Dih2}).

Appendix \ref{app:commutator-subgroup} contains the proof of the technical proposition 
about the commutator subgroup $[\F_2/\K_{\F_2} , \F_2/\K_{\F_2}]$ of 
$\F_2/\K_{\F_2}$ for $\K \in \Dih$.  

\subsection{Comments about the existing literature}
\label{sec:existing-lit}
The results of papers \cite{Coleman-Anderson-Ihara}, \cite{Ichimura-Kaneko} and \cite{Ihara1986} 
are probably the closest to the present\footnote{We are thankful to the anonymous referee for 
mentioning to us two very important ``pre-$\GTh$'' papers \cite{Ichimura-Kaneko}, \cite{Ihara1986}.} 
article. 

Let $l$ be a prime integer. In \cite[Parts II, III]{Ihara1986}, Y. Ihara uses certain automorphisms 
of the pro-$l$ completion of $\F_2$ and the ring  $\ZZ_l[[u, v]]$ of formal Taylor 
power series to construct an infinite non-abelian profinite group $\Psi^-$
and a group homomorphism 
\begin{equation}
\label{psi-QQ}
\psi_{\QQ} : G_{\QQ} \to \Psi^{-}\,.
\end{equation}
See the Corollary after Theorem 8 on page 84 of \cite{Ihara1986}.

Let $\QQ(\mu_{l^{\infty}})$ be the field obtained from $\QQ$ by adjoining all roots of $1$ of 
orders $l^t$, $t \ge 1$. Let us also assume that $l$ is an odd prime.  
In paper \cite{Ichimura-Kaneko}, H. Ichimura and M. Kaneko construct
a group homomorphism $\IK$ from the absolute Galois group $G_{\QQ(\mu_{l^{\infty}})}$ of 
$\QQ(\mu_{l^{\infty}})$ to the subgroup $\Psi^{-}_1 \le \Psi^-$ (in paper  \cite{Ichimura-Kaneko},  
the group $\Psi^{-}_1$ is denoted by $\mF$). In paper \cite{Ichimura-Kaneko}, the authors use 
Coleman's Theorem \cite[Theorem 7.3]{Coleman-Anderson-Ihara} to prove 
that the image of $\IK$ coincides with the subgroup $\Psi^-_1$ if and only if 
the Kummer-Vandiver conjecture \cite[Section 8.3]{Washington_Cyclotomic} holds for $l$. 
(At the time of writing, the Kummer-Vandiver conjecture is verified \cite{Vandiver-check} for all 
odd primes $l < 2^{31}$.) Combining these statements with the surjectivity of the $l$-adic cyclotomic character 
$G_{\QQ} \to \ZZ_{l}^{\times}$, we conclude that the homomorphism in \eqref{psi-QQ} is surjective for every 
odd prime $l< 2^{31}$. 

Using the constructions developed in \cite[Section 4]{Drinfeld}, \cite{Ihara-ICM}, \cite{Ihara}, it is not hard to see that the 
homomorphism in \eqref{psi-QQ} is the composition of the Ihara embedding $\Ih : G_{\QQ} \to \GTh$ with the 
natural homomorphisms $\GTh \to \GT_l$ and $\GT_l \to \Psi^{-}$. 

Thus, for every odd prime $l < 2^{31}$, $\Psi^{-}$ is an infinite (non-abelian) profinite quotient of $\GTh$, 
and $\Psi^{-}$ receives a surjective homomorphism from $G_{\QQ}$. Thanks to R. Sharifi's note \cite{Sharifi}, a similar 
statement can be proved for $l = 2$.

For a prime $p$, we denote by $\NFI^{p}_{\PB_3}(\B_3)$ the following subposet of 
$\NFI^{isolated}_{\PB_3}(\B_3)$
$$
\NFI^{p}_{\PB_3}(\B_3) : = \{\N \in \NFI^{isolated}_{\PB_3}(\B_3) ~|~ \PB_3/\N  \txt{ is a }p\txt{-group}\}. 
$$
In paper \cite{Andre}, Y. Andr\'e introduces a $p$-adic version $\GT^{temp}_p$ of $\GTh$ and connects it to 
the absolute Galois group $\Gal(\ol{\QQ}_p/\QQ_p)$ of $\QQ_p$. Since  Andr\'e's definition 
of $\GT^{temp}_p$ involves the tempered fundamental group of a rigid-analytic manifold 
obtained from $\PP^1_{\QQ} - \{0, 1, \infty\}$, and there is no obvious link between 
$\GT^{temp}_p$ and 
$$
\lim\big( \ML \big|_{\NFI^{p}_{\PB_3}(\B_3)} \big),
$$
we do not see how the main result of \cite{Andre} (in the case $p = 2$) can be connected to the results of our paper.  

In paper \cite{CombeKalugin}, N.C. Combe and A. Kalugin considered the dihedral symmetry 
relation in the fundamental groupoids of the configuration spaces of points in $\CC$ and 
they proved that the action of the pro-unipotent version $\GTh^{un}$ of the Grothendieck-Teichmueller group 
preserves this dihedral symmetry. We do not see any link between the dihedral symmetry 
relation considered in \cite{CombeKalugin} and the dihedral poset $\Dih$ introduced in this paper.

In papers \cite{Guillot0} and \cite{Guillot-dihedral}, P. Guillot
investigated a similar construction related to the group $\GTh_{gen}$. 
He used an equivalent but quite different definition of $\GTh_{gen}$ 
(see \cite[Main Theorem, (a)]{Harbater-Schneps}) to get the ``set-up of finite quotients''. 
For this reason, it is not easy to compare the groupoid $\GTSh$ to Guillot's construction.
In \cite[Section 3]{Guillot-dihedral}, the author computes the kernels of his version 
of the virtual cyclotomic character for a family of characteristic subgroups of $\F_2$
related to the dihedral groups $D_n$. However, as far as we understand, the author only
considers the cases when $n = 2^{\al} n_0$, where $n_0$ is odd and $\al \in \{0, 1, 2\}$.

\subsection{Notational conventions}
For a set $X$ with an equivalence relation and $a \in X$ we will denote by $[a]$
the equivalence class which contains the element $a$. The notation $\gcd$ (resp. $\lcm$) 
is reserved for the greatest common divisor (resp. the least common multiple). 
The notation $\phi$ is reserved for Euler's phi function (a.k.a. the totient function).   
For a group $G$, $\wh{G}$ denotes the profinite completion of $G$. Every 
finite group is considered as the topological group (with the discrete topology).  

The notation $\B_n$ (resp. $\PB_n$) is reserved for the Artin braid group 
on $n$ strands (resp. the pure braid group on $n$ strands). $S_n$ denotes the 
symmetric group on $n$ letters. We denote by $\si_1$ and $\si_2$ the standard 
generators of $\B_3$. Furthermore, $x_{12}$, $x_{23}$ and $c$ are
the following generators of $\PB_3$:
$$
x_{12} := \si_1^2, \qquad x_{23}:=\si_2^2, \qquad  c := (\si_1 \si_2 \si_1)^2\,.
$$
We recall that the element  $c := (\si_1 \si_2 \si_1)^2$ belongs to the center $\cZ(\PB_3)$ of 
$\PB_3$ (and the center $\cZ(\B_3)$ of $\B_3$). Moreover, $\cZ(\B_3) = \cZ(\PB_3) = \lan c \ran \cong \ZZ$. 

We set $\D :=  \si_1 \si_2 \si_1$ and observe that 
\begin{equation}
\label{si-D-si}
\si_1 \D = \D \si_2, \qquad \si_2 \D = \D \si_1, 
\qquad 
\si_1^{-1} \D = \D \si_2^{-1}, 
\qquad 
\si_2^{-1} \D = \D \si_1^{-1},
\end{equation}
\begin{equation}
\label{Delta-2-c}
\D^2 = c.
\end{equation}

Using identities \eqref{si-D-si} and \eqref{Delta-2-c}, it is easy to see that the adjoint 
action of $\B_3$ on $\PB_3$ is given on generators by the formulas: 
\begin{equation}
\label{conj-by-si1}
\si_1 x_{12} \si_1^{-1}  = \si_1^{-1} x_{12} \si_1 = x_{12}, \qquad 
\si_1 x_{23} \si_1^{-1} = x_{23}^{-1} x_{12}^{-1} c, \qquad 
\si_1^{-1} x_{23} \si_1 = x_{12}^{-1} x_{23}^{-1}  c,
\end{equation} 
\begin{equation}
\label{conj-by-si2}
\si_2 x_{12} \si_2^{-1} = x^{-1}_{12} x^{-1}_{23} c, \qquad 
\si_2^{-1} x_{12} \si_2 = x_{23}^{-1} x_{12}^{-1} c \qquad
\si_2 x_{23} \si_2^{-1} = \si_2^{-1} x_{23} \si_2 =  x_{23}\,.
\end{equation}  
Moreover, 
\begin{equation}
\label{conj-by-D}
\D x_{12} \D^{-1} = x_{23}, \qquad 
\D x_{23} \D^{-1} = x_{12}\,.
\end{equation}

It is known \cite[Section 1.3]{Braids} that $\lan x_{12}, x_{23} \ran$ is isomorphic to the free group $\F_2$ on two 
generators, and $\PB_3  \cong \lan x_{12}, x_{23} \ran \times \lan c \ran$. In this paper, we tacitly identify $\F_2$ with the subgroup 
$\lan x_{12}, x_{23} \ran \le \PB_3$ and we often use the following 
notation for $x_{12}$, $x_{23}$ and $(x_{12} x_{23})^{-1}$:
$$
x := x_{12}, \qquad y := x_{23}, \qquad  z := y^{-1} x^{-1}\,.
$$

We denote by $\te$ and $\tau$ the automorphisms of $\F_2:= \lan x,y \ran$
defined by the formulas
\begin{equation}
\label{theta}
\te(x):= y, \qquad \te(y):= x,
\end{equation}
\begin{equation}
\label{tau}
\tau(x):= y, \qquad \tau(y):= y^{-1} x^{-1}\,.
\end{equation}
We use the same letters $\te$ and $\tau$ for the corresponding continuous 
automorphisms of $\wh{\F}_2$ (see \cite[Corollary A.2]{GTgentle}). 

For a group $G$, $\End(G)$ is the monoid of endomorphisms $G \to G$ and
the notation $[G, G]$ is reserved for the commutator subgroup of $G$. 
For a subgroup $H\le G$, the notation $|G:H|$ is reserved for the index of $H$ in $G$. 
For a finite group $G$, $|G|$ denotes the order of $G$.
For a normal subgroup $H\unlhd G$ of finite index, we denote by $\NFI_{H}(G)$ the poset 
of finite index normal subgroups $\N$ in $G$ such that $\N \le H$. Moreover, 
$\NFI(G) := \NFI_G(G)$, i.e. $\NFI(G)$ is the poset of normal finite index subgroups of a group $G$.
For a subgroup $H\le G$, $\Core_G(H)$ denotes the normal core of $H$ in $G$, i.e. 
$$
\Core_G(H) : = \bigcap_{g \in G} g H g^{-1}\,.
$$ 

For $\N \in \NFI_{\PB_3}(\B_3)$ we set 
$$
N_{\ord} := \lcm(\ord(x_{12}\N),\ord(x_{23}\N), \ord(c\N)), \qquad 
\N_{\F_2}:=\N\cap \F_2\,.
$$
We denote by $\GT_{pr}(\N)$ the set of $\GT$-pairs with the target $\N$, by $\GT_{pr}^\hs(\N)$ the set of charming $\GT$-pairs 
with the target $\N$ and by $\GT(\N)$ the set of $\GT$-shadows with the target $\N$ (see Definitions \ref{dfn:GT-pair} and
\ref{dfn:GT-shadow}.) For a pair $(m, f) \in \ZZ \times \F_2$, we denote by $[m, f]$ the corresponding 
element in $\ZZ / N_{\ord}\ZZ \times \F_2/\N_{\F_2}$, i.e. 
$[m, f] : = \big( m +  N_{\ord}\ZZ, f \, \N_{\F_2} \big)$. Note that we slightly abuse 
the notation: the positive integer $N_{\ord}$ depends on the choice of the finite 
index normal subgroup $\N \unlhd \PB_3$. Later, we introduce a family of finite index normal 
subgroups $(\K^{(n)})_{n \ge 3}$ of $\PB_3$. For each $n \ge 3$, the integer $K^{(n)}_{\ord}$ 
is the least common multiple of the orders of the cosets of $x_{12}$, $x_{23}$ and $c$ in 
the finite group $\PB_3/\K^{(n)}$.

For a commutative ring $R$, the notation $R^{\times}$ is reserved for the (multiplicative) group 
of units of $R$. We denote by $\Aff(R)$ the group of affine transformations of $R$, i.e. 
$$
\Aff(R) := R \rtimes R^{\times}
$$
with the obvious action of $R^{\times}$ on the underlying group $(R, +)$: 
$(u, r) \mapsto u \, r : R^{\times} \times R \to R$. 

Given $n, q\in \ZZ_{\ge 1}$ with $n\mid q$ and residue classes $\ol{k}_1, \ol{k}_2 \in \ZZ/ q\ZZ$, we 
occasionally write $k_1 \equiv k_2 \mod n$ meaning that the images of $\ol{k}_1, \ol{k}_2$ in 
$\ZZ/ n\ZZ$ coincide.

Given $n, q\in \ZZ_{\ge 1}$ with $n\mid q$ we also consider the following group
\begin{equation}
\label{Zn-rtimes-units-Zq}
\ZZ/n\ZZ \rtimes (\ZZ/q\ZZ)^{\times},
\end{equation}
where the action of $(\ZZ/q\ZZ)^{\times}$ on $(\ZZ/n\ZZ, +)$ comes 
from the natural surjective group homomorphism $(\ZZ/q\ZZ)^{\times} \to (\ZZ/n\ZZ)^{\times}$ 
and the above action of $(\ZZ/n\ZZ)^{\times}$ on $(\ZZ/n\ZZ, +)$. 
It is obvious that, for every $n \in \ZZ_{\ge 1}$, $\ZZ/n\ZZ \rtimes (\ZZ/n\ZZ)^{\times} = \Aff(\ZZ/n\ZZ)$.\\

The cyclotomic character $G_{\QQ} \to \Zhat^{\times}$ is denoted by $\chi$. 


\bigskip
\noindent
{\bf Acknowledgements.} We are thankful to Ihor Pylaiev for his support, for his contribution to the project and for working 
with us in the Yulia's Dream program. 
We are thankful to Pavel Etingof, 
Slava Gerovitch, and Dmytro Matvieievskyi for giving us the opportunity to participate in Yulia's Dream program. 
We are thankful to Jacob Guynee for his help with presenting selected proofs of this paper. 
We are thankful to Leila Schneps for her suggestion that prompted us to formulate the main 
result of this paper differently. We are thankful to Frauke Bleher, Ted Chinburg, Benjamin Collas, 
Benjamin Enriquez, Pavel Etingof, Pierre Guillot, David Harbater, Jaclyn Lang, 
Florian Pop, Aniruddha Sudarshan, Dmitry Tamarkin, 
John Voight, Alexander Voronov for their suggestions and 
discussions.  I.B., B.H. and V.P. are thankful to their parents for their help and support. 
I.B. and V.P. are thankful to the United Kingdom for its hospitality. B.H. acknowledges 
the Ukraine Support Program of Bar-Ilan University for their generous support. 
V.A.D. is thankful to Thomas Willwacher who suggested him (during a walk in October of 2016) 
to find an {\it interesting} family of objects of the groupoid of $\GT$-shadows.    
V.A.D. acknowledges a partial support from the project ``Arithmetic and Homotopic Galois Theory''. This project is, in turn,  
supported by the CNRS France-Japan AHGT International Research Network between the RIMS Kyoto University, 
the LPP of Lille University, and the DMA of ENS PSL. V.A.D. also acknowledges Temple University 
for 2021 Summer Research Award. We would like to thank the anonymous referee for her/his very useful feedback and 
for mentioning to us about paper \cite{Ichimura-Kaneko} by H. Ichimura and M. Kaneko.

\section{Reminder of the groupoid GTSh}
\label{sec:GTSh}
In this section, we list all the necessary definitions and statements. The statements are presented without proofs, 
as those can be found in \cite[Section 3]{GTgentle}. 
Recall that, for $\N \in \NFI_{\PB_3}(\B_3)$, $N_{\ord} := \lcm(\ord(x_{12}\N),\ord(x_{23}\N), \ord(c\N))$ 
and $\N_{\F_2}:=\N \cap \F_2$.
\begin{defi}
\label{dfn:GT-pair}
A $\GT$-\textbf{pair with the target} $\N$ is a pair 
$$
(m + N_\ord \ZZ,f\N_{\F_2})\in \ZZ / N_\ord \ZZ \times \F_2/\N_{\F_2}
$$ 
satisfying the {\bf hexagon relations} modulo $\N$:
\begin{equation}
\label{eq:fhex}
\sigma_1^{2m+1}f^{-1}\sigma_2^{2m+1}f\, \N=f^{-1}\sigma_1\sigma_2x_{12}^{-m}c^m\, \N
\end{equation}
and
\begin{equation}
\label{eq:shex}
f^{-1}\sigma_2^{2m+1}f\sigma_1^{2m+1}\, \N=\sigma_2\sigma_1x_{23}^{-m}c^m f\,\N.
\end{equation}
If, in addition, the $\GT$-pair satisfies
$$
\gcd(2m+1, N_\ord) = 1  \quad \text{and} \quad f\N_{\F_2} \in [\F_2/\N_{\F_2},\F_2/\N_{\F_2}],
$$
then we call it \textbf{charming}. We denote by $\GT_{pr}(\N)$ the set of $\GT$-pairs with 
the target $\N$ and by $\GT_{pr}^\hs(\N)$ the set of charming $\GT$-pairs 
with the target $\N$.
\end{defi}
It is easy to see from the definitions of $N_\ord$ and $\N_{\F_2}$ that, if a pair 
$(m, f) \in \ZZ \times \F_2$ satisfies \eqref{eq:fhex}, \eqref{eq:shex} then all elements of the coset 
$(m + N_\ord\mathbb{Z}, f\N_{\F_2})$ satisfy \eqref{eq:fhex}, \eqref{eq:shex}.

If a pair $(m, f) \in  \ZZ \times \F_2$ satisfies an additional condition, then,
due to the following proposition, relations  \eqref{eq:fhex}, \eqref{eq:shex}
are equivalent to the \textbf{simplified hexagon relations} (see \eqref{s-hex1} and 
\eqref{s-hex11}).
\begin{prop}
\label{prop:simple-hexa}
Let $\N \in \NFI_{\PB_3}(\B_3)$ and $\theta$ and $\tau$ be the automorphisms of $\F_2$ defined 
in \eqref{theta} and \eqref{tau}, respectively. 
A pair $(m,f) \in \mathbb{Z}\times [\F_2,\F_2]$ satisfies the hexagon relations modulo $\N$ if and only if
\begin{equation}
\label{s-hex1}
f \te(f) \in \N_{\F_2}
\end{equation}
and
\begin{equation}
\label{s-hex11}
\tau^2(y^m f)\tau(y^m f)y^m f \in \N_{\F_2}.
\end{equation}
\end{prop}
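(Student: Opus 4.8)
The plan is to rewrite each hexagon relation as a single membership statement in the pure braid group $\PB_3$, to evaluate the relevant element of $\PB_3 \cong \F_2 \times \lan c\ran$ by a direct computation with the braid relation and the conjugation formulas \eqref{conj-by-si1}--\eqref{conj-by-D}, and finally to use the hypothesis $f \in [\F_2,\F_2]$ to see that all powers of $c$ drop out. For the first step, observe that in $S_3$ one has $\rho(\sigma_i^{2m+1}) = \rho(\sigma_i)$ (since $\rho(\sigma_i)^2 = 1$) and $\rho(f) = 1$; hence the two sides of \eqref{eq:fhex}, and likewise of \eqref{eq:shex}, have equal images in $S_3$. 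Consequently \eqref{eq:fhex} holds modulo $\N$ if and only if $W_1 \in \N$, and \eqref{eq:shex} holds modulo $\N$ if and only if $W_2 \in \N$, where
$$W_1 := f\,\sigma_1^{2m+1}f^{-1}\sigma_2^{2m+1}f\,c^{-m}x_{12}^{m}\sigma_2^{-1}\sigma_1^{-1}, \qquad W_2 := f^{-1}\sigma_2^{2m+1}f\,\sigma_1^{2m+1}f^{-1}c^{-m}x_{23}^{m}\sigma_1^{-1}\sigma_2^{-1}$$
are obtained by collecting each hexagon to one side (possibly conjugating by $f$, which does not affect membership in the normal subgroup $\N$); a short check of symmetric-group images shows $W_1, W_2 \in \PB_3$.

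The heart of the argument is the evaluation of $W_1$ and $W_2$ in $\PB_3$. The admissible moves are: $\sigma_i^{2m+1} = x_i^m\sigma_i = \sigma_i x_i^m$ (writing $x_1 := x_{12}$, $x_2 := x_{23}$); the identities \eqref{si-D-si} together with $\D := \sigma_1\sigma_2\sigma_1$ and \eqref{Delta-2-c}, which let one push all the $\sigma_i^{\pm1}$ past the pure-braid letters and merge them into powers of $\D$ (e.g. $\sigma_1\sigma_2 = \sigma_2^{-1}\D = \D\sigma_1^{-1}$); and the conjugation rules for $w \in \F_2$ read off from \eqref{conj-by-si1}--\eqref{conj-by-D}: conjugating $w$ by $\sigma_1$ applies the substitution $x_{12}\mapsto x_{12}$, $x_{23}\mapsto x_{23}^{-1}x_{12}^{-1}$ and multiplies by $c^{e}$ with $e$ the $x_{23}$-exponent sum of $w$; conjugating by $\sigma_2$ applies $x_{12}\mapsto x_{12}^{-1}x_{23}^{-1}$, $x_{23}\mapsto x_{23}$ and multiplies by $c^{e'}$ with $e'$ the $x_{12}$-exponent sum of $w$; and conjugating by $\D$ applies $x_{12}\leftrightarrow x_{23}$ with no power of $c$ --- that is, conjugation by $\D$ acts on $\F_2$ exactly as $\te$. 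Since $f \in [\F_2,\F_2]$ all exponent sums of $f$ vanish, and one checks that the powers of $c$ thereby produced cancel --- partly against the explicit $c^{\pm m}$ in $W_1, W_2$ and partly against one another --- so that $W_1$ and $W_2$ become honest elements of $\F_2 = \lan x, y\ran$. Following the letters $x$, $y$, $z = y^{-1}x^{-1}$ through the substitutions one sees $\te$ appear (from $\D$) and, from a $\sigma$-conjugation composed with a $\D$-conjugation, the order-three automorphism $\tau$ (recall $\tau(x) = y$, $\tau(y) = z$, $\tau(z) = x$, so $\tau^3 = \id$).

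After this bookkeeping --- and one more rearrangement that recombines the two resulting $\F_2$-identities --- the system $\{W_1 \in \N,\ W_2 \in \N\}$ becomes the system $\{\, f\,\te(f) \in \N,\ \tau^2(y^m f)\,\tau(y^m f)\,y^m f \in \N \,\}$; the cyclic shape of the second identity reflects $\tau^3 = \id$ and $xyz = 1$, which make $x^m\tau^2(f)\,z^m\tau(f)\,y^m f$ and all its cyclic rotations trivial simultaneously. Since $\te, \tau \in \Aut(\F_2)$ and $f \in \F_2$, the elements $f\,\te(f)$ and $\tau^2(y^m f)\tau(y^m f)y^m f$ lie in $\F_2$, whence membership in $\N$ is equivalent to membership in $\N \cap \F_2 = \N_{\F_2}$; this is precisely \eqref{s-hex1} and \eqref{s-hex11}. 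As every step is a logical equivalence, the converse implication comes along for free.

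The main obstacle is the computation in the second paragraph: the substitutions induced by $\sigma_1$- and $\sigma_2$-conjugation do not commute with each other or with the powers $x_i^m$, so the reductions must be carried out in a carefully chosen order, and the whole equivalence hinges on the exact cancellation of the $c$-powers. This is exactly where $f \in [\F_2,\F_2]$ is used: for a general $f \in \F_2$ one only recovers \eqref{s-hex1} and \eqref{s-hex11} up to a correction power of $c$ depending on $m$ and on the abelianization of $f$, so the clean equivalence genuinely needs $f$ to lie in the commutator subgroup. A subsidiary combinatorial point is to verify that the surviving $\F_2$-words collapse to the compact $\tau$-cyclic expression of \eqref{s-hex11} and not merely to some longer equivalent word.
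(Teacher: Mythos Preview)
The paper does not actually prove this proposition: Section~\ref{sec:GTSh} explicitly states that ``the statements are presented without proofs, as those can be found in \cite[Section 3]{GTgentle}.'' So there is no proof in the present paper to compare against; the proposition is quoted from \cite{GTgentle}.

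Your outline is the standard route and is correct in spirit: rewrite each of \eqref{eq:fhex}, \eqref{eq:shex} as a membership $W_i\in\N$ with $W_i\in\PB_3$, push all braid letters through the pure-braid letters using \eqref{si-D-si}--\eqref{conj-by-D}, and observe that the hypothesis $f\in[\F_2,\F_2]$ kills the exponent sums that control the emerging powers of $c$, so that one lands in $\F_2$ and hence in $\N_{\F_2}$. The identifications of $\D$-conjugation with $\te$ and of the $(\si_1\si_2)$-twist with $\tau$ are exactly the right observations.

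That said, your write-up is a sketch rather than a proof: the sentence ``After this bookkeeping --- and one more rearrangement that recombines the two resulting $\F_2$-identities --- the system \ldots becomes \ldots'' hides the entire computation, including the precise way in which the two hexagons interact (one typically obtains \eqref{s-hex1} from one hexagon and then uses it to simplify the other into \eqref{s-hex11}). None of this is a wrong idea or a fatal gap, but as written a reader cannot verify the claimed cancellations or the final cyclic form $\tau^2(y^mf)\tau(y^mf)y^mf$ without redoing the calculation. If you want a self-contained argument, you should actually display the reduction of at least one of $W_1,W_2$ to an explicit word in $x,y,c$ and exhibit the $c$-cancellation.
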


Due to the hexagon relations, the map $T_{m, f}$ defined by 
\begin{equation}
T_{m,f}(\sigma_1)=\sigma_1^{2m+1}\, \N, \qquad T_{m,f}(\sigma_2)=f^{-1}\sigma_2^{2m+1}f\, \N
\end{equation}
is a group homomorphism from $\B_3$ to $\B_3/\N$. 
The homomorphisms $T_{m,f}^{\PB_3} : \PB_3\rightarrow \PB_3/\N$ and 
$T_{m,f}^{\F_2} : \F_2 \rightarrow \F_2/\N_{\F_2}$ are the restrictions of 
the homomorphism $T_{m,f}$ to $\PB_3$ and $\F_2$, respectively.
Notice that 
$$
\ker(T_{m,f})=\ker(T_{m,f}^{\PB_3}) \in \NFI_{\PB_3}(\B_3).
$$

From now on, $[m, f]$ denotes the $\GT$-pair $(m + N_\ord\mathbb{Z}, f\N_{\F_2})$
represented by $(m, f) \in \ZZ \times \F_2$. Recall that, for every 
charming $\GT$-pair $[m, f]$ with the target $\N$, the surjectivity of $T_{m,f}$ is equivalent 
to that of $T_{m,f}^{\PB_3}$ and to that of $T_{m,f}^{\F_2}$. 

We can now define a $\GT$-shadow with the target $\N$.
\begin{defi}
\label{dfn:GT-shadow}
A charming $\GT$-pair $[m, f]$ is called a $\GT$-\e{shadow with the target} $\N$ 
if $T_{m,f}$ is surjective. The set of $\GT$-shadows with the target $\N$ is denoted by $\GT(\N)$.
\end{defi}
Since for every $[m, f] \in \GT(\N)$, the group homomorphisms 
$T_{m, f} : \B_3 \to \B_3/\N$, $T^{\PB_3}_{m, f} : \PB_3 \to \PB_3/\N$ and 
$T^{\F_2}_{m, f} : \F_2 \to \F_2/\N_{\F_2}$ are onto, $T_{m, f}$, $T^{\PB_3}_{m, f}$, 
and $T^{\F_2}_{m, f}$ induce the isomorphisms 
$$
T^{\isom}_{m, f} :  \B_3/\K \iso \B_3/\N, \quad
T^{\PB_3, \isom}_{m, f} :  \B_3/\K \iso \B_3/\N, \quad
T^{\F_2, \isom}_{m, f} :  \F_2/\K_{\F_2} \iso \F_2/\N_{\F_2},
$$
respectively, where $\K := \ker(T_{m, f})$.

This observation implies the first three statements of the following 
proposition\footnote{See also \cite[Proposition 3.8]{GTgentle}.}: 
\begin{prop}
\label{prop:quotients-isomorphic}
Let $\K, \N \in \NFI_{\PB_3}(\B_3)$. If there exists $[m,f] \in \GT(\N)$ such that $\K = \ker(T_{m,f})$ then
\begin{enumerate}

\item the finite groups $\B_3/\N$ and $\B_3/\K$ are isomorphic (and hence $|\B_3:\N|=|\B_3:\K|$);

\item the finite groups $\PB_3/\N$ and $\PB_3/\K$ are isomorphic (and hence $|\PB_3:\N|=|\PB_3:\K|$);

\item the finite groups $\F_2/\N_{\F_2}$ and $\F_2/\K_{\F_2}$ are isomorphic 
(and hence $|\F_2: \N_{\F_2}| = |\F_2:\K_{\F_2}|$);

\item $K_\ord = N_\ord$.

\end{enumerate}
\end{prop}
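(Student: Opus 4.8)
The plan is to split the statement into the three isomorphism claims (1)--(3), which come essentially for free from the induced isomorphisms recorded just before the proposition, and the equality $K_{\ord}=N_{\ord}$ in (4), which needs one small extra ingredient.

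For (1)--(3) I would just unwind that observation. Since $[m,f]\in\GT(\N)$ is a $\GT$-shadow, the homomorphisms $T_{m,f}$, $T^{\PB_3}_{m,f}$ and $T^{\F_2}_{m,f}$ are all surjective; by construction $\ker(T_{m,f})=\ker(T^{\PB_3}_{m,f})=\K$ and $\ker(T^{\F_2}_{m,f})=\K\cap\F_2=\K_{\F_2}$. Hence these maps descend to the isomorphisms $T^{\isom}_{m,f}:\B_3/\K\iso\B_3/\N$, $T^{\PB_3,\isom}_{m,f}:\PB_3/\K\iso\PB_3/\N$ and $T^{\F_2,\isom}_{m,f}:\F_2/\K_{\F_2}\iso\F_2/\N_{\F_2}$, which gives the three stated group isomorphisms; the index equalities follow because isomorphic finite groups have equal order, with $|\B_3:\N|=|\B_3/\N|$, $|\PB_3:\N|=|\PB_3/\N|$ and $|\F_2:\N_{\F_2}|=|\F_2/\N_{\F_2}|$ (all finite, since $\N\le\PB_3$ has finite index in $\PB_3$, which has index $6$ in $\B_3$).

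For (4) my plan is to track the generators $x_{12}$, $x_{23}$, $c$ of $\PB_3$ through the isomorphism $T^{\PB_3,\isom}_{m,f}:\PB_3/\K\iso\PB_3/\N$ and compare orders. Straight from the definition of $T_{m,f}$ one gets $T_{m,f}(x_{12})=x_{12}^{2m+1}\N$ and $T_{m,f}(x_{23})=f^{-1}x_{23}^{2m+1}f\N$, so the classes $x_{12}\K$ and $x_{23}\K$ are sent to $x_{12}^{2m+1}\N$ and $f^{-1}x_{23}^{2m+1}f\N$. The remaining ingredient I would need is the identity
\[
T_{m,f}(c)\equiv c^{2m+1}\pmod{\N},
\]
which should fall out of the hexagon relation \eqref{eq:fhex} (to rewrite $T_{m,f}(\si_1\si_2)$), the relation $c=(\si_1\si_2)^3$, the centrality of $c$, and the conjugation formulas \eqref{conj-by-si1}--\eqref{conj-by-si2}; if one does not want to grind through this, it can be cited from the corresponding computation in \cite{GTgentle}. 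Granting it, $c\K$ is sent to $c^{2m+1}\N$. Then I would finish as follows: by definition of $N_{\ord}$ the orders $\ord(x_{12}\N)$, $\ord(x_{23}\N)$, $\ord(c\N)$ all divide $N_{\ord}$, while the charming hypothesis gives $\gcd(2m+1,N_{\ord})=1$, so $2m+1$ is coprime to each of these three orders; raising to an exponent coprime to the order of a cyclic group is a bijection on it, and conjugation preserves order, hence $\ord(x_{12}^{2m+1}\N)=\ord(x_{12}\N)$, $\ord(f^{-1}x_{23}^{2m+1}f\N)=\ord(x_{23}\N)$, $\ord(c^{2m+1}\N)=\ord(c\N)$. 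Since an isomorphism preserves the order of an element, $\ord(x_{12}\K)=\ord(x_{12}\N)$, $\ord(x_{23}\K)=\ord(x_{23}\N)$ and $\ord(c\K)=\ord(c\N)$, and taking least common multiples yields $K_{\ord}=N_{\ord}$.

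The main obstacle is the identity $T_{m,f}(c)\equiv c^{2m+1}\pmod{\N}$: everything else is bookkeeping with the induced isomorphisms together with the coprimality already built into the ``charming'' condition. That identity is also the only place where the braid relation and the structure of $c=(\si_1\si_2)^3=\D^2$ genuinely enter, so I expect it to be the one step worth writing out carefully (or citing).
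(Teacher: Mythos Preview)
Your proposal is correct and matches the paper's approach for (1)--(3): the paper simply says these follow from the induced isomorphisms $T^{\isom}_{m,f}$, $T^{\PB_3,\isom}_{m,f}$, $T^{\F_2,\isom}_{m,f}$ (recorded just before the proposition) and otherwise refers to \cite[Proposition 3.8]{GTgentle}. For (4) the paper gives no argument at all here, deferring to \cite{GTgentle}; your proof via the identity $T_{m,f}(c)\equiv c^{2m+1}\pmod{\N}$ together with $\gcd(2m+1,N_{\ord})=1$ is the standard one and is correct, and you are right that this identity is the only nontrivial step (it is indeed the computation carried out in \cite{GTgentle}).
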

Just as in \cite[Section 3]{GTgentle}, for $(m,f) \in \ZZ \times \F_2$, we 
denote by $E_{m, f}$ the following endomorphism of $\F_2$:
$$
E_{m,f}(x)=x^{2m+1}, \quad E_{m,f}(y)=f^{-1}y^{2m+1}f.
$$
We recall that $\GT$-shadows form a groupoid $\GTSh$ with 
$\Ob(\GTSh):=\NFI_{\PB_3}(\B_3)$ and 
$$
\GTSh(\K,\N):=\{[m,f] \in \GT(\N)~|~ \ker(T_{m,f}) = \K\}.
$$
The following theorem describes the composition of morphisms in $\GTSh$:
\begin{thm}
\label{thm:GTSh}
Let $\N^{(1)}, \N^{(2)}, \N^{(3)} \in \NFI_{\PB_3}(\B_3)$,
$$
[m_1, f_1] \in \GTSh(\N^{(2)}, \N^{(1)}), \qquad [m_2, f_2] \in \GTSh(\N^{(3)},\N^{(2)})
$$
and  $N_{\ord} := N^{(1)}_{\ord} = N^{(2)}_{\ord} = N^{(3)}_{\ord}$. Then
\begin{enumerate}
\item the formula
\begin{equation}
\label{composition}
[m_1,f_1] \circ [m_2,f_2] = [2m_1m_2+m_1+m_2,f_1E_{m_1,f_1}(f_2)]
\end{equation}
defines the composition of morphisms in $\GTSh$; 

\item for every $\N \in \NFI_{\PB_3}(\B_3)$, the pair $(0,1_{\F_2})$ represents the identity morphism in $\GTSh(\N,\N)$;

\item finally, for every $[m, f] \in \GTSh(\K,\N)$, the formulas
$$
\tilde{m} + N_\ord\mathbb{Z} := 
-(2\overline{m}+1)^{-1}\overline{m},\quad \tilde{f}\K_{\F_2}:=(T_{m,f}^{\F_2, \isom})^{-1}(f^{-1}\N_{\F_2})
$$
define the inverse $[\tilde{m},\tilde{f}]\in\GTSh(\N,\K)$ of the morphism $[m,f]$.
\end{enumerate}
\end{thm}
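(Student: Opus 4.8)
The plan is to verify Theorem \ref{thm:GTSh} by checking each of the three assertions directly against the definitions of $\GT$-shadow, the homomorphism $T_{m,f}$, and the groupoid structure. The underlying principle is that a $\GT$-shadow $[m,f] \in \GTSh(\K,\N)$ is really a package encoding the isomorphism $T^{\isom}_{m,f} : \B_3/\K \iso \B_3/\N$ together with the ``automorphism-like'' data $(m,f)$, and composition of $\GT$-shadows should correspond to composition of these isomorphisms. So first I would make this precise: given a pair $(m,f)\in\ZZ\times\F_2$ representing a $\GT$-shadow, the endomorphism $E_{m,f}$ of $\F_2$ (and its extension to an endomorphism-like map on $\B_3$ sending $\si_i$ to the appropriate conjugate of $\si_i^{2m+1}$) is the lift of $T_{m,f}$; the composite of two such lifts $E_{m_1,f_1}\circ E_{m_2,f_2}$ should again be of the form $E_{m_3,f_3}$, and a short computation on the generators $x,y$ pins down $m_3 = 2m_1m_2+m_1+m_2$ and $f_3 = f_1 E_{m_1,f_1}(f_2)$. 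This is the computational heart of part (1): one evaluates $E_{m_1,f_1}(E_{m_2,f_2}(x))$ and $E_{m_1,f_1}(E_{m_2,f_2}(y))$ and matches them against $E_{m_3,f_3}(x)$, $E_{m_3,f_3}(y)$, using $\chi_{vir}$-multiplicativity $2m_3+1 = (2m_1+1)(2m_2+1)$ for the exponents.

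For part (1) I would then argue that the formula \eqref{composition} genuinely lands in $\GTSh(\N^{(3)}, \N^{(1)})$: that $[m_3,f_3]$ is a charming $\GT$-pair with target $\N^{(1)}$ (the hexagon relations for the composite follow from functoriality of the construction, or can be re-derived via Proposition \ref{prop:simple-hexa} once one knows the composite of the lifts is again a valid lift), that $T_{m_3,f_3}$ is surjective (composite of two surjections $\B_3/\N^{(3)} \to \B_3/\N^{(2)} \to \B_3/\N^{(1)}$ up to the identifications), and that $\ker(T_{m_3,f_3}) = \N^{(3)}$. The kernel computation is the key point: $T_{m_3,f_3}$ should factor as $T^{\isom}_{m_1,f_1}\circ T^{\isom}_{m_2,f_2}$ composed with the quotient map, so its kernel is the preimage of $\ker(T^{\isom}_{m_2,f_2})$, which is $\N^{(3)}$. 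The hypothesis $N^{(1)}_{\ord} = N^{(2)}_{\ord} = N^{(3)}_{\ord}$ (which is automatic by Proposition \ref{prop:quotients-isomorphic}(4) anyway, but stated for bookkeeping) ensures the $m$-components live in a common ring $\ZZ/N_{\ord}\ZZ$ so the arithmetic makes sense. Associativity and the groupoid axioms then reduce to associativity of composition of the lifts $E_{m,f}$, which is immediate since function composition is associative.

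For part (2), I would simply observe that $E_{0,1_{\F_2}}$ is the identity endomorphism of $\F_2$ and $T_{0,1_{\F_2}}$ is the canonical projection $\B_3 \to \B_3/\N$, so $\ker(T_{0,1_{\F_2}}) = \N$ and $[0,1_{\F_2}] \in \GTSh(\N,\N)$; plugging $(m_2,f_2) = (0,1)$ or $(m_1,f_1)=(0,1)$ into \eqref{composition} returns $[m_1,f_1]$ resp.\ $[m_2,f_2]$, confirming it is a two-sided identity. For part (3), given $[m,f]\in\GTSh(\K,\N)$ I would check that the proposed $[\tilde m,\tilde f]$ is a $\GT$-shadow with target $\K$ and that composing it with $[m,f]$ on either side yields the identity: the $m$-component $\tilde m = -(2\bar m+1)^{-1}\bar m$ is exactly what is forced by $(2\tilde m+1)(2m+1) = 1$ in $(\ZZ/N_{\ord}\ZZ)^\times$, and the $f$-component is defined precisely so that $T_{\tilde m,\tilde f}$ inverts the isomorphism $T^{\isom}_{m,f}$; substituting into \eqref{composition} and using that $T^{\F_2,\isom}_{m,f}$ is the isomorphism induced by $E_{m,f}$ then gives $[0,1_{\F_2}]$. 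I expect the main obstacle to be the verification that the composite $[m_3,f_3]$ is again \emph{charming} — i.e.\ that $f_3\,\N^{(1)}_{\F_2}$ lies in the commutator subgroup of $\F_2/\N^{(1)}_{\F_2}$ and that $\gcd(2m_3+1, N_{\ord})=1$ — and that it satisfies the hexagon relations; the $\gcd$ condition is easy since $2m_3+1$ is a product of units, but confirming the commutator condition and the hexagons for the composite requires either a careful direct manipulation or an appeal to the functorial description of $E_{m,f}$ as a lift respecting the relevant structure, and this bookkeeping is where the proof is least mechanical.
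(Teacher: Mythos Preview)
The paper does not actually prove Theorem \ref{thm:GTSh}: immediately after the statement it reads ``For the proof, see \cite[Theorem 3.10]{GTgentle}.'' So there is no proof in this paper to compare against; the result is quoted from \cite{GTgentle}.

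That said, your sketch is the standard argument and is essentially what one finds in \cite{GTgentle}. The computational core---verifying $E_{m_1,f_1}\circ E_{m_2,f_2}=E_{m_3,f_3}$ on the generators $x,y$ via $(2m_1+1)(2m_2+1)=2m_3+1$ and $f_3=f_1E_{m_1,f_1}(f_2)$, then lifting this to $T_{m_3,f_3}=T^{\isom}_{m_1,f_1}\circ T_{m_2,f_2}$ to read off surjectivity and the kernel---is exactly right. Your identification of the delicate step (that $[m_3,f_3]$ is again \emph{charming}, in particular that $f_3\,\N^{(1)}_{\F_2}$ lies in the commutator subgroup and that the hexagon relations hold) is accurate; in \cite{GTgentle} this is handled by showing that $E_{m,f}$ preserves the commutator subgroup and that the hexagon relations are stable under the composition, which is the ``functorial'' route you allude to. Your treatment of parts (2) and (3) is correct and complete as stated.
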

For the proof, see \cite[Theorem 3.10]{GTgentle}.

\section{The dihedral poset}
\label{sec:Dih}
Let $n \in \ZZ_{\ge 3}$ and $D_n := \lan r, s ~|~ r^n ,~s^2,~ srs^{-1}r  \ran$. 
We start with the group homomorphism $\psi_n : \PB_3 \to D_n^3$ defined by the formulas:
\begin{equation}
\label{psi-n}
\psi_n(x_{12}) := (r, s, s), \qquad 
\psi_n(x_{23}) := (rs, r, rs), \qquad \psi_n(c) := (1,1,1).
\end{equation}

We set $\K^{(n)}: = \ker(\psi_n)$ and claim that
\begin{prop}
\label{prop:K-n-in-NFI}
For every $n \in \ZZ_{\ge 3}$, $\K^{(n)}$ belongs to the poset $\NFI_{\PB_3}(\B_3)$.
\end{prop}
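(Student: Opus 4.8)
The plan is to verify the two defining conditions for membership in $\NFI_{\PB_3}(\B_3)$: first that $\K^{(n)} := \ker(\psi_n)$ is a normal subgroup of $\B_3$ (not merely of $\PB_3$, where normality is automatic since it is a kernel), and second that $\K^{(n)}$ has finite index inside $\PB_3$ (equivalently, since $D_n^3$ is finite, that $\K^{(n)} \le \PB_3$, which holds by construction as $\psi_n$ is defined on $\PB_3$). The finiteness is immediate: $\PB_3/\K^{(n)}$ embeds in $D_n^3$, which has order $(2n)^3 < \infty$, so $|\PB_3 : \K^{(n)}| < \infty$. Hence the real content is normality in $\B_3$.

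For normality in $\B_3$, the standard criterion (used throughout the $\GTSh$ literature, cf. the discussion around $\Core_{\B_3}$) is that $\K^{(n)} \unlhd \B_3$ if and only if $\K^{(n)}$ is stable under the adjoint action of $\B_3$ on $\PB_3$, and for this it suffices to check stability under conjugation by the generators $\si_1, \si_2$ (equivalently by $\D = \si_1\si_2\si_1$ together with one of the $\si_i$, or just by $\si_1$ and $\si_2$ using that $\B_3 = \lan \si_1, \si_2\ran$). Concretely, I would show that the two automorphisms of $\PB_3$ given by $a \mapsto \si_i a \si_i^{-1}$ ($i = 1, 2$) descend through $\psi_n$: that is, there exist automorphisms $\phi_1, \phi_2$ of $D_n^3$ (or at least endomorphisms, but they will be automorphisms since conjugation is) making the square $\psi_n \circ \mathrm{ad}_{\si_i} = \phi_i \circ \psi_n$ commute. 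Since $\PB_3 = \lan x_{12}, x_{23}\ran \times \lan c\ran$ is generated by $x_{12}, x_{23}, c$, it is enough to compute $\psi_n(\si_i x_{12} \si_i^{-1})$, $\psi_n(\si_i x_{23}\si_i^{-1})$, $\psi_n(\si_i c \si_i^{-1})$ using the explicit conjugation formulas \eqref{conj-by-si1}, \eqref{conj-by-si2}, and check that each such value is expressible purely in terms of $(r,s,s)$, $(rs, r, rs)$, $(1,1,1)$ in a way that is consistent — i.e. the assignment $\psi_n(x_{12}) \mapsto \psi_n(\si_i x_{12}\si_i^{-1})$, etc., respects the relations of $D_n^3$ and thus defines an endomorphism $\phi_i$ of $D_n^3$. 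Since $c$ is central and $\psi_n(c) = (1,1,1)$, the element $c$ causes no trouble; the only computation of substance is tracking $x_{12}, x_{23}$.

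Carrying this out, using $\si_1 x_{12}\si_1^{-1} = x_{12}$, $\si_1 x_{23}\si_1^{-1} = x_{23}^{-1}x_{12}^{-1}c$, and $\si_2 x_{12}\si_2^{-1} = x_{12}^{-1}x_{23}^{-1}c$, $\si_2 x_{23}\si_2^{-1} = x_{23}$, I expect the images under $\psi_n$ to work out — for instance $\psi_n(x_{23}^{-1}x_{12}^{-1}c) = ((rs)^{-1}r^{-1}, r^{-1}s^{-1}, (rs)^{-1}s^{-1})$, and one checks in $D_n$ that $(rs)^{-1}r^{-1} = sr^{-1}\cdot r^{-1} = sr^{-2}$... actually one must be careful: the point is only to confirm these are the images of a well-defined automorphism of $D_n^3$, which is guaranteed abstractly because $\mathrm{ad}_{\si_i}$ is an automorphism of $\PB_3$ and $\psi_n$ is surjective onto its image — so any ambiguity would already be killed. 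The cleanest argument: let $\bar\psi_n : \PB_3 \to \PB_3/\K^{(n)}$ be the quotient map; since $\mathrm{ad}_{\si_i}(\K^{(n)}) = \K^{(n)}$ is precisely what we want, and $\K^{(n)} = \ker\psi_n$, it suffices to show $\psi_n(\mathrm{ad}_{\si_i}(k)) = 1$ for all $k \in \K^{(n)}$; equivalently that $\ker\psi_n \subseteq \ker(\psi_n \circ \mathrm{ad}_{\si_i})$, and by symmetry (replacing $\si_i$ by $\si_i^{-1}$) equality. This reduces to showing $\psi_n \circ \mathrm{ad}_{\si_i}$ factors through $\psi_n$, i.e. to exhibiting the $\phi_i$ above on generators and verifying the $D_n^3$-relations hold for the candidate images. \textbf{The main obstacle} is thus purely computational bookkeeping in $D_n^3$: verifying that the three candidate generator-images under each $\phi_i$ satisfy $r^n = s^2 = (srs^{-1}r) = 1$ componentwise — a finite check independent of $n$ once one uses $r^n = 1$ symbolically — and similarly handling $\D$-conjugation via \eqref{conj-by-D} as a consistency cross-check (note $\psi_n(\D x_{12}\D^{-1}) = \psi_n(x_{23})$ should match permuting the $D_n$-factors, which it visibly does not componentwise, so one sees $\phi_\D$ mixes the three coordinates — this is fine and expected). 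No conceptual difficulty is anticipated beyond organizing these verifications.
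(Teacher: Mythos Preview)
Your strategy is sound and will work, but it is not the paper's route, and your write-up has one circular moment and one under-anticipated subtlety.

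\textbf{Comparison with the paper.} The paper does not check stability of $\K^{(n)}$ under $\mathrm{ad}_{\si_i}$ directly. Instead it introduces the single homomorphism $\vf:\PB_3\to D_n$, $\vf(x_{12})=s$, $\vf(x_{23})=rs$, $\vf(c)=1$, and proves that $\K^{(n)}=\Core_{\B_3}(\ker\vf)$. Normality in $\B_3$ is then automatic. Concretely, they compute the six conjugates $\ker(\vf^w)$ over coset representatives of $\PB_3$ in $\B_3$, collapse them to three using an automorphism of $D_n$, and identify $\ker(\vf^{\si_2^{-1}}\times\vf^{\si_1^{-1}}\times\vf)$ with $\ker\psi_n$ via an inner automorphism of $D_n^3$. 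This is cleaner precisely because normality never has to be checked.

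\textbf{The circular step.} Your sentence ``which is guaranteed abstractly because $\mathrm{ad}_{\si_i}$ is an automorphism of $\PB_3$ and $\psi_n$ is surjective onto its image'' does not hold: surjectivity of $\psi_n$ onto $G_n$ gives you a well-defined map $G_n\to D_n^3$ out of $\psi_n\circ\mathrm{ad}_{\si_i}$ \emph{only if} $\mathrm{ad}_{\si_i}$ preserves $\ker\psi_n$, which is exactly the claim. You must really exhibit $\phi_i\in\Aut(D_n^3)$ with $\phi_i\circ\psi_n=\psi_n\circ\mathrm{ad}_{\si_i}$; there is no shortcut.

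\textbf{The subtlety you will meet.} The $\phi_i$ are not coordinate-wise: you must permute the three $D_n$-factors. For $\si_1$, one finds $\psi_n(\si_1 x_{23}\si_1^{-1})=(r^2s,\,r^{-1}s,\,r)$; matching this against $\psi_n(x_{23})=(rs,r,rs)$ coordinate by coordinate would force $r\mapsto r^{-1}s$ in the second slot, which is not an automorphism of $D_n$ for $n\ge3$. What does work is $\phi_1=(\alpha_1\times\alpha_2\times\id)\circ(\text{swap factors }2,3)$ with $\alpha_1:(r,s)\mapsto(r,rs)$ and $\alpha_2:(r,s)\mapsto(r^{-1},s)$; one checks these are genuine automorphisms of $D_n$ and that $\phi_1$ sends $(r,s,s)\mapsto(r,s,s)$, $(rs,r,rs)\mapsto(r^2s,r^{-1}s,r)$. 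A similar $\phi_2$ handles $\si_2$. Once this is written down, your argument is complete --- and, unwound, it is essentially the same permutation-of-kernels phenomenon the paper exploits through the normal-core language.
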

\begin{proof}
First, note that $\K^{(n)}$ is a finite index subgroup of $\PB_3$ because $D_n^3$ is finite. 
The subgroup $\PB_3$ also has finite index in $\B_3$, so $\K^{(n)}$ has finite index in $\B_3$. 
Thus it remains to show that $\K^{(n)}$ is normal in $\B_3$.

Consider the map $\vf : \PB_3 \to D_n$ given by
\begin{equation*}
\vf(x_{12}) := s, \qquad \vf(x_{23}) := rs, \qquad \vf(c) :=1. 
\end{equation*}
We will show that $\K^{(n)}$ is the normal core in $\B_3$ of $\ker(\vf) \leq \PB_3$. 
Define for $w \in \B_3$ the map $\vf^w : \PB_3 \to D_n$ given by
\begin{equation*}
\vf^w(g) := \vf(w^{-1} g w), \qquad g \in \PB_3.
\end{equation*}
Note that
\begin{equation*}
\ker(\vf^w) = w \ker(\vf) w^{-1},
\end{equation*}
and hence
\begin{equation*}
C := \Core_{\B_3}\big(\ker(\vf) \big) = \bigcap_{w \in \B_3} \ker(\vf^w). 
\end{equation*}
Since $|\B_3 : \PB_3|  = 6$ and that the elements
\begin{equation*}
1, ~~ \si_1^{-1}, ~~\si_2^{-1}, ~~\D^{-1},~~ 
\si_1^{-1} \D^{-1},~~ \si_2^{-1} \D^{-1}
\end{equation*}
form a complete set of coset representatives, we have 
\begin{equation}
\label{Core-better}
C = \ker(\vf) \cap \ker (\vf^{\si_1^{-1}}) \cap 
\ker (\vf^{\si_2^{-1}}) \cap \ker (\vf^{\D^{-1}}) \cap
\ker(\vf^{ \si_1^{-1} \D^{-1}}) \cap \ker(\vf^{ \si_2^{-1} \D^{-1}}).
\end{equation}
We will now show that 
\begin{equation}
\label{normal-core-intersection-of-three}
 C = \ker(\vf) \cap \ker(\vf^{\si_1^{-1}}) \cap \ker(\vf^{\si_2^{-1}}).
\end{equation}

Let $\ga$ be the following automorphism of $D_n$
$$
\ga(r) := r^{-1}, \qquad \ga(s) := r s.
$$
Clearly, $\ga(s) = rs$ and $\ga(rs) = s$ or equivalently 
$\ga \circ \vf(x_{12}) = \vf(x_{23})$ and $\ga \circ \vf(x_{23}) = \vf(x_{12})$.
Since conjugation by $\D$ swaps $x_{12}$ with $x_{23}$, and $\vf(c) = 1$, we have
$$
\vf^{w\D^{-1}}(g) = \ga \circ \vf^w(g), \quad \forall~ g \in \PB_3, ~ w \in \B_3\,.
$$
This gives us 
\begin{equation*}
\ker(\vf) = \ker(\vf^{\D^{-1}}), \qquad \ker(\vf^{\si_1^{-1}}) = \ker(\vf^{\si_1^{-1}\D^{-1}}), \qquad 
\ker(\vf^{\si_2^{-1}}) = \ker(\vf^{\si_2^{-1}\D^{-1}}),
\end{equation*}
which proves \eqref{normal-core-intersection-of-three}. 

Let $\ti{\psi} := \vf^{\si_{2}^{-1}} \times \vf^{\si_1^{-1}} \times \vf : \PB_3 \to D_n^3$.  
Using \eqref{conj-by-si1} and \eqref{conj-by-si2} we see that 
$$
\ti{\psi}(x_{12}) = (r^{-1}, s, s), 
\qquad 
\ti{\psi}(x_{23}) = (rs, r, rs), 
\qquad 
\ti{\psi}(c) = (1,1,1).
$$
Identity \eqref{normal-core-intersection-of-three} implies that $C = \ker(\ti{\psi})$. 

Let $j$ be the following inner automorphism of $D_n^3$:
$$
j(g_1,g_2, g_3) : = (rs (g_1) (rs)^{-1}, g_2, g_3).
$$
Since $\psi_n = j \circ \ti{\psi}$, we have
$$
\K^{(n)} := \ker(\psi_n) =  \ker(\ti{\psi}) = C. 
$$ 
Since $\K^{(n)} \unlhd \B_3$, we proved that $\K^{(n)} \in \NFI_{\PB_3}(\B_3)$.
\end{proof}

We denote by $\Dih$ the subposet of $\NFI_{\PB_3}(\B_3)$
$$
\Dih := \{\K^{(n)} ~|~ n \in \ZZ_{\ge 3} \}  
$$
and call it the \e{dihedral poset}.

\begin{remark}  
\label{rem:Kn-F2-Kn-ord}
For every $n \in \ZZ_{\ge 3}$, $\K^{(n)}_{\F_2}$ is the kernel of the homomorphism $\F_2 \to D_n^3$ that 
sends $x$ to $(r, s, s)$ and $y$ to $(rs, r, rs)$. Moreover, 
\begin{equation}
\label{Kn-ord}
K^{(n)}_{\ord} = \lcm(n, 2).
\end{equation}
\end{remark}  
\begin{remark}  
\label{rem:Kq-Kn}
If $q, n \in \ZZ_{\ge 3}$, $n ~|~q$, and 
$D_q = \lan a, b~|~ a^q ,~b^2,~ bab^{-1}a  \ran$, then 
the formulas
\begin{equation}
\label{Dq-to-Dn}
\eta_{q,n}(a) := r, \qquad 
\eta_{q,n}(b) := s
\end{equation}
define a natural homomorphism $\eta_{q,n} : D_q \to D_n$. 
Since $\eta^3_{q, n} \circ \psi_q = \psi_n$, we have 
$\K^{(q)} \le \K^{(n)}$. 
\end{remark}

It is convenient to identify $\F_2/ \K^{(n)}_{\F_2}$ with the subgroup
$$
G_n : = \lan (r, s, s),   (rs, r, rs) \ran \le D_n^3\,.
$$ 
For $w \in \F_2$, $\ol{w}$ denotes the coset $w \K^{(n)}_{\F_2}$. Thus, 
\begin{equation}
\label{xyz-bar}
\ol{x} = (r, s, s), \qquad \ol{y} = (rs, r, rs), 
\qquad 
\ol{z} = (r^2 s, r^{-1} s, r).
\end{equation}

We will now use the identification $\F_2/ \K^{(n)}_{\F_2} \cong G_n$ 
to prove two important propositions about the structure of the dihedral poset $\Dih$:
\begin{prop}
\label{prop:K-2n-n}
For every odd integer $n \ge 3$,  we have $\K^{(n)} = \K^{(2n)}$.
\end{prop}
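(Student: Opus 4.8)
The plan is to show $\K^{(n)} = \K^{(2n)}$ for odd $n \ge 3$ by proving that the two kernels $\ker(\psi_n)$ and $\ker(\psi_{2n})$ coincide as subgroups of $\PB_3$. Since $n \mid 2n$, Remark \ref{rem:Kq-Kn} already gives the inclusion $\K^{(2n)} \le \K^{(n)}$, so the whole content is the reverse inclusion $\K^{(n)} \le \K^{(2n)}$. By the identification $\F_2/\K^{(n)}_{\F_2} \cong G_n$ and the fact that $c \in \K^{(n)} \cap \K^{(2n)}$ (both $\psi_n$ and $\psi_{2n}$ kill $c$), it suffices to work inside $\F_2$: I want to show $\K^{(n)}_{\F_2} \le \K^{(2n)}_{\F_2}$, equivalently that the surjection $\F_2/\K^{(2n)}_{\F_2} \to \F_2/\K^{(n)}_{\F_2}$, i.e. $G_{2n} \twoheadrightarrow G_n$, is an isomorphism. (One must also separately check the normal-core/$\PB_3$ vs. $\F_2$ bookkeeping, but since $\PB_3 \cong \F_2 \times \langle c\rangle$ and $c$ lies in both kernels, normality in $\B_3$ for both is already established by Proposition \ref{prop:K-n-in-NFI}, and equality of the $\F_2$-parts plus equality of the $c$-parts forces equality of the subgroups of $\PB_3$.)

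So the key computation is: for odd $n$, the map $G_{2n} \to G_n$ induced by $\eta_{2n,n}^3$ is injective, i.e. $|G_{2n}| = |G_n|$. The clean way is to exhibit, for odd $n$, a retraction or an explicit isomorphism. Concretely, I would compute $G_n$ inside $D_n^3$. Let $H_n \le D_n$ be the subgroup generated by the \emph{components} appearing, or better, observe that $G_n$ surjects onto each of the three $D_n$ factors (first factor via $\langle r, rs\rangle = D_n$, etc.), and then identify $G_n$ as a fibered product. The heart of the matter is the classical fact that for \emph{odd} $n$ the dihedral group $D_n$ has a nontrivial direct-product / abelianization structure: $D_n \cong \ZZ/2 \times D_n'$? — no, rather the relevant point is that $D_{2n} \cong D_n \times \ZZ/2$ when $n$ is odd (since $\gcd(n,2)=1$, the center $\langle r^n \rangle \cong \ZZ/2$ of $D_{2n}$ splits off: $r^2$ has order $n$ and generates with $s$ a copy of $D_n$, and $r^n$ is central of order $2$, giving $D_{2n} = \langle r^2, s\rangle \times \langle r^n\rangle \cong D_n \times \ZZ/2$).

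Given this splitting, I would compare generators: in $D_{2n}$ write $r = (r_0, t)$ under $D_{2n} \cong D_n \times \ZZ/2$ where $r_0$ has order $n$ and $t$ generates $\ZZ/2$; then $s = (s_0, 1)$. The generators of $G_{2n}$ are $((r_0,t),(s_0,1),(s_0,1))$ and $((r_0 s_0, t),(r_0,t),(r_0 s_0, t))$ inside $(D_n \times \ZZ/2)^3$. I would check that the $\ZZ/2^3$-part of $G_{2n}$ is exactly the diagonal-type subgroup $\{(t^a, t^b, t^a)\}$ — wait, from the two generators the $\ZZ/2$-coordinates are $(t, 1, 1)$ and $(t, t, t)$, which generate all of $(\ZZ/2)^3$; hence $G_{2n} \to (\ZZ/2)^3$ is onto, and $G_{2n} \cong G_n \times (\ZZ/2)^3$? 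That would make $|G_{2n}| = 8|G_n|$, contradicting the claim — so I have made a sign error and the correct statement must be that the $\ZZ/2$-coordinates are \emph{constrained}. The resolution: $t = r^n$, and one must recompute $rs$, $r^2 s$ etc. in the split coordinates; $t$ itself is $r^n = (1, t)$ only if $r_0 = r^2$... Actually $r = r \cdot 1$, and $r = (r^2)^{(n+1)/2} \cdot r^n$ since $n$ odd means $(n+1)/2$ is an integer and $2\cdot\frac{n+1}{2} = n+1 \equiv 1 \pmod{n}$; so $r = (r_0^{(n+1)/2}, t)$ with $r_0 = r^2$. Then the three generators' $\ZZ/2$-parts are indeed $(t,1,1)$ and $(t,t,t)$ — so I should double-check $\psi_n$'s second generator: $\psi_{2n}(x_{23}) = (rs, r, rs)$ has $\ZZ/2$-parts $(t, t, t)$. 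These do generate $(\ZZ/2)^3$.

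**The main obstacle**, then, is that my first guess at the mechanism is wrong, and I expect the real reason is more subtle: it is \emph{not} that $G_{2n} \cong G_n$ via a direct-factor argument, but rather that the relations defining $\K^{(n)}_{\F_2}$ as a subgroup of $\F_2$ are \emph{already satisfied} modulo $\K^{(2n)}_{\F_2}$ — i.e. one should directly verify that a generating set of $\K^{(n)}_{\F_2}$ (obtained, say, via Reidemeister–Schreier from $\psi_n$, or more cleverly from known relations in $D_n$ such as $x^n z^{?}\cdots$) lands in $\ker(\psi_{2n})$. The workable route I would actually pursue: use Remark \ref{rem:Kn-F2-Kn-ord}, $K^{(n)}_{\ord} = \lcm(n,2) = 2n$ for odd $n$, which equals $K^{(2n)}_{\ord} = \lcm(2n, 2) = 2n$ — consistent with Proposition \ref{prop:quotients-isomorphic}(4) — and compute $|G_n|$ explicitly as a function of $n$ (I expect $|G_n| = 2n \cdot |\text{something}|$; a short calculation with the fibered-product description over the common quotient $D_n^{\mathrm{ab}} = \ZZ/2$ should give $|G_n| = 2n^2 \cdot 2 = 4n^2$ or similar), then show $|G_{2n}| = |G_n|$ when $n$ is odd by tracking how the order formula degenerates because $\gcd(2,n)=1$ collapses a factor. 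So the hard part is getting the order formula for $|G_n|$ right and seeing precisely which factor in it is insensitive to doubling an odd $n$; everything after that is the two inclusions, one of which is free from Remark \ref{rem:Kq-Kn}.
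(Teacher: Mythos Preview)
Your overall strategy is correct and matches the paper: one inclusion is free from Remark \ref{rem:Kq-Kn}, and for the reverse inclusion it suffices to show $|G_n| = |G_{2n}|$. But your execution has a concrete error and an unfinished computation.

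\textbf{The error in the splitting approach.} The elements $(t,1,1)$ and $(t,t,t)$ do \emph{not} generate all of $(\ZZ/2)^3$: their span is $\{(1,1,1),(t,1,1),(1,t,t),(t,t,t)\}$, of order $4$. So there is no contradiction here to hunt for, and no ``sign error''. However, even with the correct projection image $(\ZZ/2)^2$, the splitting route stalls: knowing that $G_{2n}$ surjects onto both $G_n$ and $(\ZZ/2)^2$ only tells you $G_{2n}$ is a subdirect product, and you still need to bound $|G_{2n}|$ from below or compute the kernel of $G_{2n}\to G_n$ directly. At that point you are effectively doing the order count anyway.

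\textbf{The direct order count (your ``workable route'') is exactly the paper's proof}, and it is short once you find the right subgroup. The key observation is
\[
\bar{x}^{2}=(r^{2},1,1),\qquad \bar{y}^{2}=(1,r^{2},1),\qquad \bar{z}^{2}=(1,1,r^{2}),
\]
so $J_q:=\langle r^2\rangle\times\langle r^2\rangle\times\langle r^2\rangle\le G_q$. Since $\mathrm{ord}(r^2)=q$ for $q$ odd and $q/2$ for $q$ even, one gets $|J_q|=q^3$ (odd case) or $(q/2)^3$ (even case). One then checks $G_q/J_q\cong(\ZZ/2\ZZ)^2$: the cosets $J_q,\ \bar{x}J_q,\ \bar{y}J_q,\ \bar{x}\bar{y}J_q$ are distinct and each nontrivial one squares into $J_q$. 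Hence
\[
|G_q|=\begin{cases}4q^3 & q\text{ odd},\\ 4(q/2)^3 & q\text{ even},\end{cases}
\]
and for odd $n$ this gives $|G_n|=4n^3=4(2n/2)^3=|G_{2n}|$. Your guess $4n^2$ was off by a factor of $n$; the ``fibered-product over $D_n^{\mathrm{ab}}$'' heuristic is not the right picture here --- the subgroup $J_q$ already accounts for three independent copies of $\langle r^2\rangle$.
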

\begin{proof}
Due to Remark \ref{rem:Kq-Kn}, $\K^{(2n)} \subset \K^{(n)}$. 

As above, we identify the quotient group $\F_2/\K_{\F_2}^{(q)}$ with the 
subgroup $G_q$ of $D_q^3$ generated by $\ol{x}$ and $\ol{y}$.
Due to \eqref{xyz-bar}, 
$$
\ol{x}^2 = (r^2, 1, 1), \qquad 
\ol{y}^2 = (1, r^2, 1), \qquad 
\ol{z}^2 = (1, 1, r^2).
$$
Hence $J_q := \langle r^2 \rangle \times \langle r^2 \rangle \times \langle r^2 \rangle $ is a subgroup of $G_q$. 
It is easy to see that
$$
|J_q|=
\begin{cases}
q^3, &  \text{if $q$ is odd},\\
\left(\frac{q}{2}\right)^3, & \text{if $q$ is even}
\end{cases}
$$
and $G_q/J_q \cong \ZZ/2\ZZ \times \ZZ/2\ZZ$ 
(there are 4 cosets: $\{J_q, \overline{x}J_q, \overline{y}J_q, \overline{xy}J_q\}$
and the cosets  $\overline{x}J_q, \overline{y}J_q, \overline{xy}J_q$ have order 
$2$ in $G_q/J_q$). Therefore, $|\PB_3:\K^{(q)}|=|\F_2/\K_{\F_2}^{(q)}|=4|J_q|$. Thus,
$$
|\PB_3:\K^{(q)}|=
\begin{cases}
4q^3, & \text{if $q$ is odd},\\
4\left(\frac{q}{2}\right)^3, & \text{if $q$ is even.}
\end{cases}
$$

Therefore, for an odd integer $n$, we have $|\PB_3 : \K^{(n)}|=4n^3=4\left(\frac{2n}{2}\right)^3=|\PB_3:\K^{(2n)}|$. 
Hence $|\K^{(n)}:\K^{(2n)}|=1$ and we have the desired equality  $\K^{(2n)}=\K^{(n)}$.
\end{proof}

We can now describe the structure of the dihedral poset: 
\begin{prop}
\label{prop:structure-of-Dih}
Let $n, q \ge 3$. Then
$$
\K^{(q)} \subset \K^{(n)} \iff n \mid \lcm(q, 2).
$$
\end{prop}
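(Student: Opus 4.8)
The plan is to prove the biconditional in two directions, using the index
computation already assembled in the proof of Proposition \ref{prop:K-2n-n}. Recall from that
proof that for every $q \ge 3$ one has $|\PB_3 : \K^{(q)}| = 4 q^3$ if $q$ is odd and
$|\PB_3 : \K^{(q)}| = 4(q/2)^3$ if $q$ is even; more useful for bookkeeping is the uniform
statement $|\PB_3 : \K^{(q)}| = 4\,m_q^3$, where $m_q := \lcm(q,2)/2$ is the odd part when $q$
is odd and half of $q$ when $q$ is even. Equivalently, since $\K^{(2n)} = \K^{(n)}$ for odd $n$
by Proposition \ref{prop:K-2n-n}, every element of $\Dih$ equals $\K^{(d)}$ for a unique
$d$ of the form $d = \lcm(\ell, 2)$ with $\ell$ the "reduced parameter"; I will phrase the
argument so as to sidestep this normalization subtlety.

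For the implication ($\Leftarrow$): suppose $n \mid \lcm(q,2)$. Set $q' := \lcm(q,2)$, so
$n \mid q'$ and, by Remark \ref{rem:Kq-Kn} (applied with the pair $n \mid q'$), $\K^{(q')}
\subseteq \K^{(n)}$. It therefore suffices to show $\K^{(q)} \subseteq \K^{(q')}$. If $q$ is
even then $q' = q$ and there is nothing to prove; if $q$ is odd then $q' = 2q$ and
Proposition \ref{prop:K-2n-n} gives $\K^{(q)} = \K^{(2q)} = \K^{(q')}$. Combining,
$\K^{(q)} \subseteq \K^{(n)}$.

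For the implication ($\Rightarrow$): suppose $\K^{(q)} \subseteq \K^{(n)}$. I will extract two
constraints. First, taking orders: by Remark \ref{rem:Kn-F2-Kn-ord}, $K^{(n)}_{\ord} =
\lcm(n,2)$ and $K^{(q)}_{\ord} = \lcm(q,2)$; since $\K^{(q)} \subseteq \K^{(n)}$ forces the
order of $x_{12}\K^{(q)}$ (and of $x_{23}\K^{(q)}$, $c\K^{(q)}$) to be a multiple of the order
of the corresponding coset mod $\K^{(n)}$, we get $\lcm(n,2) \mid \lcm(q,2)$, hence in
particular $n \mid \lcm(q,2) \cdot (\text{a power of }2)$ — but this only gives the $2$-part
up to a factor, so I need a cleaner route. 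The clean route is: the inclusion $\K^{(q)}
\subseteq \K^{(n)}$ means the homomorphism $\psi_q : \PB_3 \to D_q^3$ factors through
$\psi_n$, i.e. there is a (surjective) homomorphism $\pi : G_q \twoheadrightarrow G_n$ with
$\pi(\ol{x}) = \ol{x}$, $\pi(\ol{y}) = \ol{y}$ (matching the generators in \eqref{xyz-bar}).
Looking at the first coordinate, $\pi$ must send $r = $ (first coordinate of $\ol{x}^2$, up to
the identification) to an element of order dividing its image; concretely $\ol{x}^2 = (r^2,1,1)$
in $G_q$ maps to $(r^2,1,1)$ in $G_n$, so $\pi$ carries the cyclic group $\langle r^2\rangle$
of order $m_q$ onto $\langle r^2 \rangle$ of order $m_n$, whence $m_n \mid m_q$. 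Unwinding
$m_n = \lcm(n,2)/2$ and $m_q = \lcm(q,2)/2$, this says $\lcm(n,2) \mid \lcm(q,2)$, and in
particular $n \mid \lcm(q,2)$, as desired.

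The step I expect to be the main obstacle is making the "$\psi_q$ factors through $\psi_n$"
argument fully rigorous, i.e. verifying that the inclusion $\K^{(q)} \subseteq \K^{(n)}$
really does yield a well-defined surjection $G_q \twoheadrightarrow G_n$ respecting generators
and that one may read off $m_n \mid m_q$ from the first coordinate alone. An alternative, more
computational but essentially bulletproof route avoids this: use the explicit index formula
$|\PB_3 : \K^{(q)}| = 4 m_q^3$ and the general fact that $\K^{(q)} \subseteq \K^{(n)}$ forces
$|\PB_3 : \K^{(n)}| \mid |\PB_3 : \K^{(q)}|$, giving $m_n^3 \mid m_q^3$, hence $m_n \mid m_q$,
hence $\lcm(n,2) \mid \lcm(q,2)$ and $n \mid \lcm(q,2)$. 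Either way the divisibility
$m_n \mid m_q$ is the crux; everything else is assembling Remark \ref{rem:Kq-Kn} and
Proposition \ref{prop:K-2n-n} for the converse direction.
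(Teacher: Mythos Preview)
Your proof is correct, and the ``$\Leftarrow$'' direction matches the paper's argument essentially verbatim. For ``$\Rightarrow$'', however, you took an unnecessary detour. Your abandoned first approach already works: once you have $\lcm(n,2) \mid \lcm(q,2)$, you are done, because $n \mid \lcm(n,2)$ trivially, so $n \mid \lcm(q,2)$ follows immediately. There is no ``$2$-part up to a factor'' issue. This is exactly the paper's argument, phrased even more directly: since $x_{12}^{K^{(q)}_{\ord}} \in \K^{(q)} \subset \K^{(n)}$, applying $\psi_n$ gives $(r^{K^{(q)}_{\ord}},1,1) = (1,1,1)$ in $D_n^3$, whence $n \mid K^{(q)}_{\ord} = \lcm(q,2)$.

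Your two alternative routes (the induced surjection $G_q \twoheadrightarrow G_n$ and the index computation $4m_n^3 \mid 4m_q^3$) are both valid and yield $m_n \mid m_q$, hence $\lcm(n,2) \mid \lcm(q,2)$, but they are longer than necessary. The factoring argument is fine as stated; the index argument also works because $a^3 \mid b^3 \Rightarrow a \mid b$ for positive integers. Neither buys anything over the one-line order computation, though.
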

\begin{proof} Let us take care of the implication ``$\Leftarrow$''.
If $q$ is even, then we have $n \mid q$, and the desired 
set inclusion follows from Remark \ref{rem:Kq-Kn}. 
If $q$ is odd, then $n \mid 2q$ and 
we obtain $\K^{(q)}=\K^{(2q)}\subset \K^{(n)}$ from 
the same remark.

To prove the implication ``$\Rightarrow$'', we 
note that $x_{12}^{K_{\ord}^{(q)}} \in \K^{(q)} \subset \K^{(n)}$. 
Then $(1,1,1)=\psi_n(x_{12}^{K_{\ord}^{(q)}}) = (r^{K_{\ord}^{(q)}},1,1) \in D_n \times D_n \times D_n$, 
and hence $n \mid K_{\ord}^{(q)}$.
\end{proof}

\bigskip

To describe the set of $\GT$-shadows with the target $\K^{(n)}$, we need the following 
statement:
\begin{prop}
\label{prop:Vadym}
For every $n \in \ZZ_{\ge 3}$, the commutator subgroup
$[G_n,G_n]$ of $G_n := \lan \ol{x}, \ol{y} \ran$ consists of elements of the form 
\begin{equation}
\label{elements-C}
(r^{2 n_1}, r^{2 n_2}, r^{2 n_3}), \qquad
(n_1, n_2, n_3) \in (2\ZZ)^3 ~\txt{ or }~
(n_1, n_2, n_3) \in (2\ZZ+1)^3
\end{equation}
i.e. $n_1, n_2, n_3$ are either all even integers or all odd integers. 
\end{prop}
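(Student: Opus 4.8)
The plan is to work entirely inside the group $G_n \le D_n^3$, using the explicit generators $\ol{x} = (r,s,s)$ and $\ol{y} = (rs, r, rs)$ from \eqref{xyz-bar}. First I would identify the abelianization $G_n/[G_n,G_n]$. From the proof of Proposition \ref{prop:K-2n-n} we already know $|G_n| = 4|J_q|$ with $J_n = \lan r^2 \ran^3$ and $G_n/J_n \cong \ZZ/2\ZZ \times \ZZ/2\ZZ$; the latter is abelian, so $[G_n,G_n] \le J_n$. Hence every element of $[G_n,G_n]$ is of the form $(r^{2n_1}, r^{2n_2}, r^{2n_3})$, and the only thing to pin down is exactly which triples $(n_1,n_2,n_3)$ modulo $n$ (or modulo $n/\gcd(n,2)$) occur. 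So the real content is to compute $[G_n,G_n]$ as a subgroup of $J_n$.

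Next I would compute enough basic commutators to see that the set described in \eqref{elements-C} is contained in $[G_n,G_n]$. The natural candidates are $[\ol{x},\ol{y}]$, $[\ol{x},\ol{z}]$, $[\ol{y},\ol{z}]$ and, more usefully, commutators like $[\ol{x}^2, \ol{y}]$, which already lies in $J_n$. In each dihedral factor $D_n$, a commutator $[r^a s^\epsilon, r^b s^\delta]$ is a power of $r$ whose exponent is easy to write down (it is $\pm(\text{something})$ depending on the parities $\epsilon,\delta$). Using $\ol{x} = (r,s,s)$, $\ol{y} = (rs,r,rs)$, one finds that $[\ol{x},\ol{y}]$ and similar commutators produce triples of $r$-exponents that are either all $\equiv 0$ or all $\equiv 2 \pmod 4$-type patterns — i.e. the exponents $2n_1,2n_2,2n_3$ have $n_1,n_2,n_3$ of the same parity. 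I would exhibit explicit generators: one commutator giving $(r^2, r^{2}, r^{2})$-type (all $n_i$ odd, e.g. all equal to $1$) and commutators giving $(r^4, 1, 1)$, $(1, r^4, 1)$, $(1,1,r^4)$ or their analogues (all $n_i$ even), and check these generate the full set in \eqref{elements-C}. Combined with the parity constraint this shows $[G_n,G_n]$ equals that set.

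The remaining (and main) obstacle is the reverse inclusion: showing no triple of mixed parity can arise as a product of commutators. For this I would construct a homomorphism $G_n \to A$ to some abelian group $A$ that detects the parity obstruction and vanishes on the set \eqref{elements-C} but would be nonzero on a mixed-parity triple; concretely, map $G_n$ to $\ZZ/2\ZZ$ (or $(\ZZ/2\ZZ)^2$) by recording, for an element $(r^{a_1}s^{\epsilon_1}, r^{a_2}s^{\epsilon_2}, r^{a_3}s^{\epsilon_3})$ of $G_n$, a suitable $\ZZ/2\ZZ$-linear combination of the $a_i \bmod 2$ together with the $\epsilon_i$; verifying this is a well-defined homomorphism on $G_n$ (using that $\ol{x},\ol{y}$ satisfy the defining relations) forces every commutator, hence every element of $[G_n,G_n]$, to have all $n_i$ of the same parity. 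Care is needed when $n$ is even, since then $r^2$ has order $n/2$ and the ``parity of $n_i$'' is only meaningful modulo that order; here I would invoke Proposition \ref{prop:K-2n-n} (reducing the odd case $n$ to $2n$ if convenient) or simply note that when $n$ is even the exponents live in $\ZZ/(n/2)\ZZ$ and the same parity-detecting homomorphism still makes sense provided one is slightly careful, which I expect to be the one genuinely fiddly point. The detailed verification that the parity-detecting map respects the relations $r^n = s^2 = (rs)^2 = 1$ in each factor is the step I expect to absorb most of the work; everything else is a short computation with commutators in dihedral groups.
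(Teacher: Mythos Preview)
Your overall architecture is sound and the ``forward'' half (exhibiting enough commutators to generate the set $C_n$ in \eqref{elements-C}) matches the paper. The paper, like you, computes $[\ol{x}^t,\ol{y}^h]$ for the four parity cases of $(t,h)$ and extracts from those the generators $(r^{4},1,1)$, $(1,r^{4},1)$, $(1,1,r^{4})$ and $(r^{2},r^{2},r^{2})$ of $C_n$.

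For the reverse inclusion $[G_n,G_n]\subset C_n$, however, the specific homomorphism you propose does not work. You suggest recording, for an element $(r^{a_1}s^{\epsilon_1},r^{a_2}s^{\epsilon_2},r^{a_3}s^{\epsilon_3})$, a $\ZZ/2\ZZ$-linear combination of the $a_i\bmod 2$ and the $\epsilon_i$. But every element of $J_n=\langle r^2\rangle^3$ has all $a_i$ even and all $\epsilon_i=0$, so any such map is identically zero on $J_n$ and cannot separate $C_n$ from the mixed-parity triples in $J_n\setminus C_n$. The information you actually need is $a_i\bmod 4$ (equivalently, $n_i\bmod 2$ when $a_i=2n_i$), which does not factor through the abelianization of $D_n$; there is no obvious way to package that as a homomorphism out of $D_n^3$.

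There are two clean fixes. The simplest: check directly that $C_n$ is normal in $G_n$ (conjugation by $\ol{x}$ sends $(r^{2n_1},r^{2n_2},r^{2n_3})$ to $(r^{2n_1},r^{-2n_2},r^{-2n_3})$, preserving the parity condition, and similarly for $\ol{y}$), and then verify that the single commutator $[\ol{x},\ol{y}]=(r^{2},r^{-2},r^{-2})$ lies in $C_n$. This shows $G_n/C_n$ is abelian, hence $[G_n,G_n]\le C_n$, and the quotient map $G_n\to G_n/C_n$ is exactly the homomorphism you were looking for. The paper takes a different route: it invokes the fact that $[\F_2,\F_2]$ is generated as a group by the elements $[x^t,y^h]$, so $[G_n,G_n]$ is generated by the $[\ol{x}^t,\ol{y}^h]$; the four parity-case computations then show each such generator lies in $C_n$. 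Your quotient-map argument is arguably cleaner since it needs only one commutator computation for this inclusion, but the paper's approach has the virtue of producing both inclusions from the same list of computations.
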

The proof of Proposition \ref{prop:Vadym} is somewhat technical
and it is given in Appendix \ref{app:commutator-subgroup}.

\begin{remark}  
\label{rem:4-mid-n-4-nmid-n}
In Proposition \ref{prop:Vadym}, it makes sense to consider 
integers $n_1, n_2, n_3$ modulo $\ord(r^2)$. Moreover, it makes sense to impose the condition
$$
(n_1, n_2, n_3) \in (2\ZZ)^3 ~\txt{ or }~
(n_1, n_2, n_3) \in (2\ZZ+1)^3
$$
only in the case when $4 \mid n$. 
If $n \in 4 \ZZ + 2$ or if $n$ is odd, then 
\begin{equation}
\label{comm-G-n-odd-or}
[G_n, G_n] = \lan r^2 \ran \times  \lan r^2 \ran \times  \lan r^2 \ran. 
\end{equation}
Indeed, if $n = 4t +2$, then $\ord(r^2)= 2t + 1$. 
Hence $\lan r^4 \ran = \lan r^2 \ran$ and identity 
\eqref{comm-G-n-odd-or} follows from the inclusion 
$$
\lan r^4 \ran \times \lan r^4 \ran \times \lan r^4 \ran \subset 
[G_n, G_n].
$$
If $n$ is odd, then the proof of identity \eqref{comm-G-n-odd-or}
is easier and we leave it to the reader.
\end{remark}

\section{The description of $\GT(\K^{(n)})$}
\label{sec:GT-Kn}

Due to \eqref{conj-by-D} and normality of $\K^{(n)}$ in $\B_3$, 
\begin{equation}
\label{theta-K-n-F2}
\te(\K^{(n)}_{\F_2}) = \K^{(n)}_{\F_2}, \qquad \forall~~ n \ge 3, 
\end{equation}
where $\te$ is the automorphism of $\F_2$ defined in \eqref{theta}. 

Due to identities \eqref{conj-by-si1} and \eqref{conj-by-si2}, we have 
\begin{equation}
\label{conj-by-si1si2}
(\si_1 \si_2) x_{12} (\si_1 \si_2)^{-1} = x_{23}, 
\qquad
(\si_1 \si_2) x_{23} (\si_1 \si_2)^{-1}  = x_{23}^{-1} x_{12}^{-1} c.
\end{equation}

Using \eqref{conj-by-si1si2} and $c \in \K^{(n)}$, one can show that
\begin{equation}
\label{tau-K-n-F2}
\tau(\K^{(n)}_{\F_2}) = \K^{(n)}_{\F_2}, \qquad \forall~~ n \ge 3, 
\end{equation}
where $\tau$ is the automorphism of $\F_2$ defined in \eqref{tau}. 

Properties \eqref{theta-K-n-F2} and \eqref{tau-K-n-F2} imply that 
$\te, \tau \in \Aut(\F_2)$ descend to automorphisms of 
the quotient group $\F_2/ \K^{(n)}_{\F_2} \cong G_n$ and to automorphisms 
of the commutator subgroup $[G_n, G_n] \le G_n$. 

Due to the identification $\F_2/ \K^{(n)}_{\F_2} \cong G_n$, the 
above observations about $\te$, $\tau$ and Proposition \ref{prop:simple-hexa}, 
the set $\GT^{\hs}_{pr}(\K^{(n)})$ of charming $\GT$-pairs
is identified with the set of pairs 
$$
(m, g) ~\in~  \{0,1,\dots, K^{(n)}_{\ord} - 1 \} \times [G_n, G_n]
$$
for which $\gcd(2m+1,  K^{(n)}_{\ord}) = 1$,
\begin{equation}
\label{hexa1-g}
g \te(g) = 1
\end{equation}
and 
\begin{equation}
\label{hexa11-m-g}
\tau^2(\ol{y}^m g) \tau(\ol{y}^m g) \ol{y}^m g = 1.
\end{equation}

To solve \eqref{hexa1-g}, we notice that
$$
\te(\ol{z}) = \big( \te(\ol{x} \ol{y}) \big)^{-1}  = 
\big( \ol{y} \ol{x} \big)^{-1}  =
\big(  (rs, r, rs) (r, s, s) \big)^{-1} = (s, rs, r)^{-1} = (s, rs, r^{-1}). 
$$
Hence
\begin{equation}
\label{theta-bar-z-2}
\te(\ol{z}^{2}) = \ol{z}^{-2}. 
\end{equation}

Let $n_1, n_2, n_3 \in \{0,1,\dots, \ord(r^2)-1\}$. Combining \eqref{theta-bar-z-2} with 
$\te(\ol{x}) = \ol{y}$ and $\te(\ol{y}) = \ol{x}$, we conclude that, 
for every $g := (r^{2 n_1}, r^{2 n_2}, r^{2 n_3}) \in [G_n,G_n]$, we have 
\begin{equation}
\label{theta-nice}
\te(r^{2 n_1}, r^{2 n_2}, r^{2 n_3}) = (r^{2 n_2}, r^{2 n_1}, r^{-2 n_3}). 
\end{equation}
Moreover, since $\tau(x):= y$, $\tau(y):= z$ and $\tau(z) =x$, we have
\begin{equation}
\label{tau-nice}
\tau(r^{2 n_1}, r^{2 n_2}, r^{2 n_3}) = (r^{2 n_3}, r^{2 n_1}, r^{2 n_2}).
\end{equation}

Using \eqref{theta-nice}, we see that $g := (r^{2 n_1}, r^{2 n_2}, r^{2 n_3}) \in [G_n,G_n]$ satisfies 
\eqref{hexa1-g} if and only if
$$
n_1 + n_2 \equiv 0 ~\mod~ \ord(r^2). 
$$

Let us now consider 
$$
m \in \{0,1,\dots, K^{(n)}_{\ord} - 1 \}, \qquad 
\gcd(2m+1, K^{(n)}_{\ord}) = 1 
$$
and assume that $m$ is odd. 

Setting 
$$
g := (r^{2 k}, r^{-2 k}, r^{2 t}),
$$
unfolding the right hand side of  \eqref{hexa11-m-g} and using \eqref{tau-nice},
we get (recall that $m$ is odd):
$$
(r^m,s,s) (r^{-2k}, r^{2t}, r^{2k})
(r^2 s, r^{-1} s, r^m) (r^{2t}, r^{2k}, r^{-2k})
(rs, r^m, rs) (r^{2k}, r^{-2k}, r^{2t}) 
$$
$$
= (r^m,s,s) 
(r^2 s, r^{-1} s, r^m)
(rs, r^m, rs) \, 
(r^{-2k}, r^{-2t}, r^{-2k})
 (r^{-2t}, r^{2k}, r^{2k})
 (r^{2k}, r^{-2k}, r^{2t}) 
$$
$$
= (r^{m+1}, r^{m+1}, r^{-m-1}) 
(r^{-2t}, r^{-2t}, r^{2t}) = (r^{m+1-2t}, r^{m+1 -2t}, r^{2t-m-1}). 
$$

Thus a pair 
$$
\big( m, (r^{2 k}, r^{-2 k}, r^{2 t}) \big)
$$
satisfies \eqref{hexa11-m-g} if and only if 
$$
m+1 \equiv 2t ~\mod~ \ord(r). 
$$

Notice that, if $4 \mid n$, then we have the additional condition\footnote{See Proposition 
\ref{prop:Vadym} and Remark \ref{rem:4-mid-n-4-nmid-n}.} on $k$ and $t$: $k \equiv t \mod 2$. 
This condition is equivalent to the congruence 
$\displaystyle k \equiv \frac{m+1}{2} ~\mod~ 2$. 
Thus we arrive at the following statement:
\begin{prop}  
\label{prop:GT-pr-charm}
Let $n \in \ZZ_{\ge 3}$ and 
$$
\cX_n := \big\{m : m \in \{ 0 ,1,\dots, K_{\ord}^{(n)} -1 \} ~|~ \gcd(2m+1,  K_{\ord}^{(n)}) = 1\big\},
$$
\begin{equation}
\label{varkappa}
\varka(m):=
\begin{cases}
m+1, & \text{if }~ 2 \nmid m, \\
-m,  & \text{if }~ 2 \mid m.
\end{cases}
\end{equation}
Then 
$$
\GT_{pr}^{\hs}(\K^{(n)}) =
\begin{cases}
\big\{ (m, (r^{2 k}, r^{-2 k}, r^{\varkappa(m)})) ~|~ m \in  \mathcal{X}_n,  ~k \in \ZZ, ~ k \equiv \frac{\varkappa(m)}{2} \text{ mod 2} \big\} 
& \text{if } 4 \mid n, \\[0.18cm]
\big\{ (m, (r^{2 k}, r^{-2 k}, r^{\varkappa(m)})) ~|~ m \in  \mathcal{X}_n,  ~k \in \ZZ \big\} & \text{if } 4 \nmid n.
\end{cases}
$$
\end{prop}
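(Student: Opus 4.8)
The plan is to combine the already-established identifications of $\GT^{\hs}_{pr}(\K^{(n)})$ with the simplified hexagon relations \eqref{hexa1-g} and \eqref{hexa11-m-g}, and then solve these two relations explicitly inside $[G_n,G_n]$. By Proposition \ref{prop:simple-hexa} (together with properties \eqref{theta-K-n-F2}, \eqref{tau-K-n-F2} and the identification $\F_2/\K^{(n)}_{\F_2}\cong G_n$), a charming $\GT$-pair with target $\K^{(n)}$ is the same as a pair $(m,g)\in\{0,\dots,K^{(n)}_{\ord}-1\}\times[G_n,G_n]$ with $\gcd(2m+1,K^{(n)}_{\ord})=1$ satisfying \eqref{hexa1-g} and \eqref{hexa11-m-g}. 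By Proposition \ref{prop:Vadym} and Remark \ref{rem:4-mid-n-4-nmid-n} we may write $g=(r^{2n_1},r^{2n_2},r^{2n_3})$ with the constraint that $n_1,n_2,n_3$ have the same parity when $4\mid n$ and with no constraint otherwise. The first step is to feed this form into \eqref{hexa1-g}: using \eqref{theta-nice}, $g\te(g)=(r^{2(n_1+n_2)},r^{2(n_1+n_2)},1)$, so \eqref{hexa1-g} is equivalent to $n_1+n_2\equiv 0\pmod{\ord(r^2)}$, i.e. $n_2\equiv -n_1$. Writing $k:=n_1$ and $t:=n_3$, the first relation pins down $g=(r^{2k},r^{-2k},r^{2t})$.

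The second step is the computation of \eqref{hexa11-m-g} for $g=(r^{2k},r^{-2k},r^{2t})$. Here one must be careful that $m$ need not be odd, whereas the displayed computation in the text before the Proposition was carried out under the assumption that $m$ is odd. So I would split into two cases according to the parity of $m$. When $m$ is odd, the text already shows that $\tau^2(\ol y^m g)\tau(\ol y^m g)\ol y^m g=(r^{m+1-2t},r^{m+1-2t},r^{2t-m-1})$, which vanishes iff $m+1\equiv 2t\pmod{\ord(r)}$; since $\ord(r)=n$, this reads $2t\equiv m+1\pmod n$, matching $\varka(m)=m+1$ in \eqref{varkappa}. When $m$ is even, I would redo the same unfolding: now $\ol y^m=(r^{m},1,1)$ in the relevant coordinates (because $\ol y^2=(1,r^2,1)$ has trivial first and third components, and $(rs)^m=r^m$ when $m$ even, etc.), and the analogous telescoping of $\tau^2(\ol y^mg)\tau(\ol y^mg)\ol y^mg$ collapses the $s$-type contributions differently, producing an expression of the form $(r^{-m-2t},r^{-m-2t},r^{m+2t})$ (or its inverse); this vanishes iff $2t\equiv -m\pmod n$, matching $\varka(m)=-m$. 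In both cases the $k$-dependence cancels out of \eqref{hexa11-m-g}, exactly as in the odd computation, because the three $r^{\pm2k}$ coordinates get cyclically permuted by $\tau$ and multiply to $(r^{-2k}\cdot r^{2k}\cdot\text{(stuff)})$ — one checks the total $k$-exponent in each slot is $0$.

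The third and final step is to incorporate the parity constraint from Proposition \ref{prop:Vadym} in the case $4\mid n$: there $n_1\equiv n_2\equiv n_3\pmod 2$, i.e. $k\equiv -k\equiv t\pmod 2$, which forces $k$ even; but actually the constraint we want is $k\equiv \varka(m)/2\pmod 2$, so I need to track that $\varka(m)$ is always even (indeed $\varka(m)=m+1$ for $m$ odd and $-m$ for $m$ even, so $\varka(m)$ is even in both cases, and $\varka(m)/2=t$ modulo $\ord(r)/2$ from the relation $2t\equiv\varka(m)\pmod n$), hence the condition $k\equiv t\pmod 2$ becomes $k\equiv\varka(m)/2\pmod 2$. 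Assembling the three steps gives precisely the stated description of $\GT^{\hs}_{pr}(\K^{(n)})$, with the two cases $4\mid n$ and $4\nmid n$. I expect the main obstacle to be the bookkeeping in the even-$m$ case of \eqref{hexa11-m-g}: the $s$-components of $\ol x,\ol y,\ol z$ interact with the parity of $m$ in each of the three $D_n$-factors separately, so one must carefully recompute which coordinates of $\ol y^m$ are trivial and then retrace the telescoping product; the odd case is done in the text, but the even case is genuinely a separate (though parallel) calculation, and getting the sign of the exponent right — i.e. landing on $\varka(m)=-m$ rather than $+m$ — is where an error is most likely to creep in.
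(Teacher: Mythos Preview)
Your approach is exactly the paper's: solve \eqref{hexa1-g} via \eqref{theta-nice}, then solve \eqref{hexa11-m-g} by a direct unfolding split on the parity of $m$, and finally impose the parity constraint from Proposition~\ref{prop:Vadym} when $4\mid n$. The paper in fact only writes out the odd-$m$ unfolding and declares the even case ``easier and left to the reader'', so your plan is complete in the same sense.

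Two small computational slips to fix before you carry it out. First, for even $m$ you wrote $\ol y^m=(r^m,1,1)$ and ``$(rs)^m=r^m$''; in fact $(rs)^2=1$, so $\ol y^m=(1,r^m,1)$. With this correction the even-$m$ unfolding of \eqref{hexa11-m-g} proceeds just as in the odd case and yields the condition $2t\equiv -m\pmod n$, matching $\varka(m)=-m$. Second, in step~3 the line ``$k\equiv -k\equiv t\pmod 2$, which forces $k$ even'' is a non sequitur (any integer satisfies $k\equiv -k\pmod 2$); the actual constraint is simply $k\equiv t\pmod 2$, and from $2t\equiv\varka(m)\pmod n$ with $4\mid n$ one reads off $t\equiv\varka(m)/2\pmod 2$, giving $k\equiv\varka(m)/2\pmod 2$ as stated. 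You already landed on the right final condition, so this is only a matter of cleaning up the justification.
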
  
\begin{proof}
For odd $m$, the desired statement is proved right above the proposition. 
The case when $m$ is even is easier and we leave it to the reader. 
\end{proof}

The following lemma plays an important role in describing the connected 
component of $\K^{(n)}$ in the groupoid $\GTSh$:
\begin{lem}
\label{GTKn-tmf-onto}
For every $(m, g) \in \GT_{pr}^{\hs}(\K^{(n)})$, 
\begin{equation}
\label{ker-is-Kn}
\ker(T_{m,g}^{\PB_3}) = \K^{(n)}
\end{equation}
and the homomorphism $T_{m, g}^{\PB_3}$ is surjective. 
\end{lem}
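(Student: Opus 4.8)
The plan is to replace the abstract quotient $\PB_3/\K^{(n)}$ by an explicit one. Recall that the homomorphism $\psi_n\colon\PB_3\to D_n^3$ of \eqref{psi-n} has $\ker\psi_n=\K^{(n)}$ and image $G_n$, so it induces an isomorphism $\overline{\psi}_n\colon\PB_3/\K^{(n)}\iso G_n\le D_n^3$. Set $\Psi:=\overline{\psi}_n\circ T^{\PB_3}_{m,g}\colon\PB_3\to G_n\subseteq D_n^3$. The whole lemma will follow once we produce an automorphism $\Phi$ of $D_n^3$ with $\Psi=\Phi\circ\psi_n$: indeed then $\ker T^{\PB_3}_{m,g}=\ker\Psi=\ker(\Phi\circ\psi_n)=\ker\psi_n=\K^{(n)}$ (as $\Phi$ is injective), which is \eqref{ker-is-Kn}, and consequently $|\PB_3:\ker T^{\PB_3}_{m,g}|=|\PB_3:\K^{(n)}|$, forcing $T^{\PB_3}_{m,g}$ to be onto $\PB_3/\K^{(n)}$.

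First I would evaluate $\Psi$ on the generators $x_{12},x_{23},c$ of $\PB_3$. By Proposition~\ref{prop:GT-pr-charm} we may assume $g=(r^{2k},r^{-2k},r^{\varka(m)})$ for some $k\in\ZZ$. Since $2m+1$ is odd while $s^2=1$ and $(rs)^2=1$ in $D_n$, we have $\psi_n(x_{12})^{2m+1}=(r^{2m+1},s,s)$ and $\psi_n(x_{23})^{2m+1}=(rs,r^{2m+1},rs)$, and a coordinatewise computation in $D_n^3$ (using $sr^{j}=r^{-j}s$) gives $\Psi(x_{12})=(r^{2m+1},s,s)=:a$ and $\Psi(x_{23})=g^{-1}(rs,r^{2m+1},rs)\,g=(r^{1-4k}s,\ r^{2m+1},\ r^{1-2\varka(m)}s)=:b$.

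The remaining value $\Psi(c)$ is the one place where the hexagon relations are used. Writing $c=\D^{2}$ with $\D=\si_1\si_2\si_1$, applying the first hexagon relation \eqref{eq:fhex} to $(m,g)$, and using $x_{12}=\si_1^{2}$ together with the centrality of $c$, one obtains $T_{m,g}(\D)\equiv g^{-1}\D\,c^{m}\pmod{\K^{(n)}}$; squaring this and using $\D^{2}=c$ and $\D g^{-1}\D^{-1}=\te(g)^{-1}$ (which follows from \eqref{conj-by-D}) yields $T_{m,g}(c)\equiv g^{-1}\te(g)^{-1}c^{2m+1}\pmod{\K^{(n)}}$. Now the simplified hexagon \eqref{hexa1-g} satisfied by the charming $\GT$-pair $(m,g)$ says $g\,\te(g)=1$ in $\F_2/\K^{(n)}_{\F_2}$, so $g^{-1}\te(g)^{-1}\in\K^{(n)}_{\F_2}$ by normality, and since $c\in\K^{(n)}$ we conclude $T_{m,g}(c)\in\K^{(n)}$, i.e. $\Psi(c)=(1,1,1)$.

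It remains to exhibit $\Phi$. Using that, for any $i\in\ZZ$ and any $j$ with $\gcd(j,n)=1$, the assignment $r\mapsto r^{j},\ s\mapsto r^{i}s$ defines an automorphism of $D_n$, set $\alpha_1(r)=r^{2m+1},\ \alpha_1(s)=r^{-2m-4k}s$; $\ \alpha_2(r)=r^{2m+1},\ \alpha_2(s)=s$; $\ \alpha_3(r)=r^{1-2\varka(m)},\ \alpha_3(s)=s$. These are legitimate automorphisms of $D_n$: one has $\gcd(2m+1,n)=1$ because $n\mid K^{(n)}_{\ord}$ (recall $K^{(n)}_{\ord}=\lcm(n,2)$ by \eqref{Kn-ord}) and $\gcd(2m+1,K^{(n)}_{\ord})=1$; and $1-2\varka(m)=2m+1$ if $m$ is even while $1-2\varka(m)=-(2m+1)$ if $m$ is odd, so $\gcd(1-2\varka(m),n)=1$ as well. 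Put $\Phi:=\alpha_1\times\alpha_2\times\alpha_3\in\Aut(D_n^3)$. Evaluating $\Phi$ on $\psi_n(x_{12})=(r,s,s)$, $\psi_n(x_{23})=(rs,r,rs)$ and $\psi_n(c)=(1,1,1)$ reproduces exactly $a$, $b$ and $(1,1,1)$, so $\Phi\circ\psi_n$ and $\Psi$ agree on a generating set of $\PB_3$ and hence $\Phi\circ\psi_n=\Psi$, completing the argument. I expect the computation of $\Psi(c)$ to be the main obstacle, since it requires carefully unwinding \eqref{eq:fhex} inside $\B_3/\K^{(n)}$ and then feeding in \eqref{hexa1-g}; the rest is routine bookkeeping with dihedral elements and with $\Aut(D_n)$.
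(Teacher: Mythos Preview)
Your argument is correct. The overall strategy matches the paper's: both show that, after identifying $\PB_3/\K^{(n)}$ with $G_n\le D_n^3$, the composite $\Psi=\ol{\psi}_n\circ T^{\PB_3}_{m,g}$ differs from $\psi_n$ only by an automorphism of an ambient group, so that $\ker T^{\PB_3}_{m,g}=\ker\psi_n=\K^{(n)}$ and surjectivity follows by counting. The technical realization, however, is genuinely different. The paper embeds $D_n$ into $S_n$ via its action on $\ZZ/n\ZZ$ and then produces a triple $(h_1,h_2,h_3)\in S_n^3$ so that conjugation by it carries $\ol{x},\ol{y}$ to $\ol{x}^{2m+1}$ and $g^{-1}\ol{y}^{2m+1}g$; the point of passing to $S_n$ is that the ``scaling'' map $r\mapsto r^{2m+1}$, which is typically an outer automorphism of $D_n$, becomes inner in $S_n$. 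You instead stay inside $D_n^3$ and write down the relevant (outer) automorphisms $\alpha_i$ of $D_n$ directly, using the standard description $\Aut(D_n)=\{r\mapsto r^j,\ s\mapsto r^is:\gcd(j,n)=1\}$. Your route is slightly more economical: it avoids the detour through $S_n$, and the verification that $\Phi\circ\psi_n$ agrees with $\Psi$ on generators is transparent. You are also more explicit than the paper about why $\Psi(c)=(1,1,1)$: your computation $T_{m,g}(\Delta)\equiv g^{-1}\Delta c^m$, hence $T_{m,g}(c)\equiv g^{-1}\te(g)^{-1}c^{2m+1}$, together with $g\te(g)=1$ in $G_n$ and $c\in\K^{(n)}$, is exactly what is needed, whereas the paper handles this step more tersely. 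One cosmetic remark: once $g\te(g)=1$ in $G_n$ you have $\te(g)=g^{-1}$, so $g^{-1}\te(g)^{-1}=1$ immediately; the appeal to ``normality'' is unnecessary.
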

\begin{proof}
Since $\psi_n(c)= (1,1,1)$, we have $\PB_3/\K^{(n)} \cong \F_2/\K^{(n)}_{\F_2}$. 
Hence we may also identify the quotient group $\PB_3/\K^{(n)}$ with the subgroup 
$G_n \le D_n^3$ generated by $(r, s, s)$ and $(rs, r, rs)$.

Consider the faithful action of $D_n$ on the set $\ZZ/n\ZZ$: $r(\overline{j})=\overline{j}+\overline{1}$, 
$s(\overline{j})=-\overline{j}$, which defines an injection $D_n \rightarrow S_n$. 

Let us prove that, for every 
$(m, g) \in \GT_{pr}^{\hs}(\K^{(n)})$, 
there exists a triple $(h_1,h_2,h_3) \in S_n^3$
(depending on $(m,g)$) such that 
\begin{equation}
\label{hhh-conjugates}
\ol{x}^{2m+1}=(h_1,h_2,h_3)\, \ol{x} \, (h_1^{-1},h_2^{-1},h_3^{-1}), 
\quad  g^{-1} \ol{y}^{2m+1} g=
(h_1,h_2,h_3) \, \ol{y}\, (h_1^{-1},h_2^{-1},h_3^{-1}).
\end{equation}

A direct calculation shows that 
$$
\ol{x}^{2m+1} = (r^{2m+1}, s, s), 
\qquad
g^{-1} \ol{y}^{2m+1} g = 
(r^{1-4k} s, r^{2m+1}, r^{1 - 2 \varka(m)} s),
$$
where $g = (r^{2k},r^{-2k},r^{\varka(m)})$.

Consider the bijection $\ZZ/n\ZZ \to \ZZ/n\ZZ$ that sends $\ol{j}$ to $(2\ol{m}+1) \cdot \ol{j}$ and let $b$ be the corresponding element of $S_n$.  

Setting $h_1 := r^{-2k-m} b$, $h_2 := b$ and 
$$
h_3 :=
\begin{cases}
b & \txt{if } m \txt{ is even},\\
b s & \txt{if } m \txt{ is odd},
\end{cases}
$$
we get a triple of permutations $(h_1, h_2, h_3)$ for which \eqref{hhh-conjugates} holds. 

Since $\psi_n(c) = (1,1,1)$, identity \eqref{hhh-conjugates} implies that, for each charming 
$\GT$-pair $(m, g)$ with the target $\K^{(n)}$, there exists an inner automorphism 
$\de$ (depending on $(m, g)$) of $S_n^3$ such that
$$
T_{m,g}^{\PB_3} = \de \circ \psi_n\,.
$$
This implies the desired equality $\ker(T_{m,g}^{\PB_3}) = \K^{(n)}$. 

Combining \eqref{ker-is-Kn} with the isomorphism theorem, we conclude that, 
for every $(m, g) \in \GT_{pr}^{\hs}(\K^{(n)})$, 
the order of the subgroup $T_{m,g}^{\PB_3}(\PB_3) ~\le~ \PB_3 /  \K^{(n)}$ 
coincides with the order of the quotient group $\PB_3 /  \K^{(n)}$. 

Thus the homomorphism $T_{m,g}^{\PB_3}: \PB_3 \to \PB_3 /  \K^{(n)}$ is surjective
and the lemma is proved. 
\end{proof}

Combining Proposition \ref{prop:GT-pr-charm} with Lemma \ref{GTKn-tmf-onto}, 
we get an explicit description of the set $\GT(\K^{(n)})$:
\begin{thm}
\label{thm:GT-Kn-set}
For every $n \ge 3$, the set of $\GT$-shadows with the target $\K^{(n)}$ is
\begin{equation}
\label{GT-Kn}
\GT(\K^{(n)}) = 
\begin{cases}
\big\{ (m, (r^{2 k}, r^{-2 k}, r^{\varkappa(m)})) ~|~ m \in  \cX_n, ~k \in \ZZ, ~ k \equiv \frac{\varkappa(m)}{2} \text{ mod 2} \big\} 
& \text{if } 4\mid n, \\[0.3cm]
\big\{ (m, (r^{2 k}, r^{-2 k}, r^{\varkappa(m)})) ~|~ m \in  \cX_n, ~k \in \ZZ \big\}  & \text{if } 4\nmid n,
\end{cases}
\end{equation}
where 
$$
\cX_n := \big\{m : m \in \{ 0 ,1,\dots, K_{\ord}^{(n)} -1 \} ~|~ \gcd(2m+1,  K_{\ord}^{(n)}) = 1\big\}
$$
and the function $\varka$ is defined in \eqref{varkappa}. 
Furthermore, $\K^{(n)}$ is an isolated object of the groupoid $\GTSh$. 
\end{thm}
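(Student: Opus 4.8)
The plan is to deduce Theorem~\ref{thm:GT-Kn-set} by combining the classification of charming $\GT$-pairs in Proposition~\ref{prop:GT-pr-charm} with the surjectivity and kernel computation of Lemma~\ref{GTKn-tmf-onto}; no new computation is needed, only a careful assembly.

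First I would establish the identification $\GT(\K^{(n)}) = \GT_{pr}^{\hs}(\K^{(n)})$. The inclusion $\GT(\K^{(n)}) \subseteq \GT_{pr}^{\hs}(\K^{(n)})$ is immediate from Definition~\ref{dfn:GT-shadow}, since every $\GT$-shadow is by definition a charming $\GT$-pair. For the reverse inclusion, take $(m,g) \in \GT_{pr}^{\hs}(\K^{(n)})$. By Lemma~\ref{GTKn-tmf-onto}, the restriction $T_{m,g}^{\PB_3} : \PB_3 \to \PB_3/\K^{(n)}$ is surjective; since $(m,g)$ is charming, the surjectivity of $T_{m,g}^{\PB_3}$ is equivalent to that of $T_{m,g} : \B_3 \to \B_3/\K^{(n)}$ (as recalled just before Definition~\ref{dfn:GT-shadow}), so $(m,g)$ is a $\GT$-shadow with target $\K^{(n)}$. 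Feeding the explicit description of $\GT_{pr}^{\hs}(\K^{(n)})$ from Proposition~\ref{prop:GT-pr-charm} into this equality then gives formula~\eqref{GT-Kn} (with the same set $\cX_n$ and the same function $\varka$ from~\eqref{varkappa}).

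Next I would prove that $\K^{(n)}$ is an isolated object of $\GTSh$. Recall that $\GTSh(\K, \K^{(n)}) = \{[m,f] \in \GT(\K^{(n)}) \mid \ker(T_{m,f}) = \K\}$ and that $\ker(T_{m,f}) = \ker(T_{m,f}^{\PB_3})$. By the identification $\GT(\K^{(n)}) = \GT_{pr}^{\hs}(\K^{(n)})$ just obtained together with the equality $\ker(T_{m,g}^{\PB_3}) = \K^{(n)}$ of Lemma~\ref{GTKn-tmf-onto}, every $[m,f] \in \GT(\K^{(n)})$ satisfies $\ker(T_{m,f}) = \K^{(n)}$; hence $\GTSh(\K, \K^{(n)}) = \varnothing$ for every $\K \neq \K^{(n)}$, i.e. the only morphisms with target $\K^{(n)}$ have source $\K^{(n)}$. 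Because $\GTSh$ is a groupoid, every morphism is invertible, so likewise $\GTSh(\K^{(n)}, \K) = \varnothing$ for $\K \neq \K^{(n)}$, and therefore no object other than $\K^{(n)}$ lies in the connected component $\GTSh_{\conn}(\K^{(n)})$. Consequently $\K^{(n)}$ is isolated, $\GT(\K^{(n)}) = \GTSh(\K^{(n)}, \K^{(n)})$, and this set carries a group structure.

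Since both ingredients are already available, I do not expect a serious obstacle; the only two points demanding attention are (i) invoking the correct equivalence ``$T_{m,g}$ surjective $\Longleftrightarrow$ $T_{m,g}^{\PB_3}$ surjective'' for charming pairs, which is what converts Lemma~\ref{GTKn-tmf-onto} into the statement that charming $\GT$-pairs with target $\K^{(n)}$ are actual $\GT$-shadows, and (ii) using invertibility of morphisms in the groupoid to upgrade the one-sided information ``every morphism into $\K^{(n)}$ comes from $\K^{(n)}$'' into full isolatedness of the connected component.
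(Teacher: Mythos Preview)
Your proof is correct and follows essentially the same route as the paper: both combine Proposition~\ref{prop:GT-pr-charm} with Lemma~\ref{GTKn-tmf-onto}, using the surjectivity statement to identify $\GT(\K^{(n)})$ with $\GT_{pr}^{\hs}(\K^{(n)})$ and the kernel computation to conclude isolatedness. Your write-up is slightly more explicit in spelling out the groupoid-invertibility step for full isolatedness, but the underlying argument is the same.
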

\begin{proof}
Due to the second statement of Lemma \ref{GTKn-tmf-onto}, every charming $\GT$-pair 
with the target $\K^{(n)}$ is indeed a $\GT$-shadow, i.e.  $\GT(\K^{(n)}) = \GT_{pr}^{\hs}(\K^{(n)})$. Hence 
Proposition \ref{prop:GT-pr-charm} implies the first statement of the theorem.

The first statement of Lemma \ref{GTKn-tmf-onto}
implies that, if the target of a $\GT$-shadow is $\K^{(n)}$, then 
its source is also $\K^{(n)}$. Thus  $\K^{(n)}$ is indeed the only object of its 
connected component in $\GTSh$. 
\end{proof}

Theorem \ref{thm:GT-Kn-set} implies that $\GT(\K^{(n)}) = \GTSh(\K^{(n)}, \K^{(n)})$
and hence $\GT$-shadows with the target $\K^{(n)}$ form a (finite) group. 

We will now consider two elements $\K^{(q)}$ and $\K^{(n)}$ of the of dihedral poset 
such that $\K^{(q)} \le \K^{(n)}$. We will show that every $\GT$-shadow with the target $\K^{(n)}$
lifts to a $\GT$-shadow with the target $\K^{(q)}$:
\begin{thm}
\label{thm:GTKn-redmap-strong}
Let $n, q \in \ZZ_{\ge 3}$ and $\K^{(q)} \le \K^{(n)}$. Then the reduction homomorphism 
$$
\cR_{\K^{(q)}, \K^{(n)}} :  \GT(\K^{(q)}) \to \GT(\K^{(n)})
$$
is surjective.
\end{thm}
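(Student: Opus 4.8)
The plan is to use the explicit description of $\GT(\K^{(n)})$ and $\GT(\K^{(q)})$ given in Theorem~\ref{thm:GT-Kn-set}, together with the structure of the dihedral poset worked out in Proposition~\ref{prop:structure-of-Dih}, and reduce the statement to an elementary lifting problem about residue classes. Recall that the reduction homomorphism $\cR_{\K^{(q)}, \K^{(n)}}$ sends a $\GT$-shadow represented by a pair $(m, f) \in \ZZ \times \F_2$ to the pair represented by the same $(m, f)$ (this makes sense because $\K^{(q)} \le \K^{(n)}$, and by Proposition~\ref{prop:quotients-isomorphic} one has $K^{(q)}_{\ord} = K^{(n)}_{\ord}$ as soon as the two objects are in the same connected component — but here we need only the direct inclusion $\K^{(q)}_{\F_2} \subseteq \K^{(n)}_{\F_2}$ and the divisibility $K^{(n)}_{\ord} \mid K^{(q)}_{\ord}$ coming from \eqref{Kn-ord} and Proposition~\ref{prop:structure-of-Dih}). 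So, concretely, given a $\GT$-shadow $(\bar m, (r^{2k}, r^{-2k}, r^{\varka(\bar m)}))$ with target $\K^{(n)}$, I must produce a $\GT$-shadow $(\bar m', (r^{2k'}, r^{-2k'}, r^{\varka(\bar m')}))$ with target $\K^{(q)}$ whose image under $\cR_{\K^{(q)}, \K^{(n)}}$ is the given one.

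First I would set up notation: by Proposition~\ref{prop:structure-of-Dih}, $\K^{(q)} \le \K^{(n)}$ means $n \mid \lcm(q,2)$, and by \eqref{Kn-ord}, $K^{(n)}_{\ord} = \lcm(n,2)$ and $K^{(q)}_{\ord} = \lcm(q,2)$, so $K^{(n)}_{\ord} \mid K^{(q)}_{\ord}$. Lift the congruence class $\bar m \in \ZZ/K^{(n)}_{\ord}\ZZ$ with $\gcd(2m+1, K^{(n)}_{\ord}) = 1$ to a class $\bar m' \in \ZZ/K^{(q)}_{\ord}\ZZ$ with $\gcd(2m'+1, K^{(q)}_{\ord}) = 1$; this is possible because the map $(\ZZ/K^{(q)}_{\ord}\ZZ)^{\times} \to (\ZZ/K^{(n)}_{\ord}\ZZ)^{\times}$ is surjective (standard; $2m+1$ ranges over odd residues, or one uses the surjectivity of reduction on unit groups directly). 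A small subtlety: $m$ and $m'$ must have a compatible parity, since the formula for $\varka$ branches on the parity of $m$ and the definition of $\GT(\K^{(q)})$ when $4 \mid q$ imposes $k' \equiv \varka(m')/2 \bmod 2$; one must check that $m'$ can be chosen with the same parity as $m$ — this is automatic when $K^{(n)}_{\ord}$ is even (then $m \bmod 2$ is already determined by $\bar m$ and the lift respects it), and when $K^{(n)}_{\ord}$ is odd one has freedom to pick the parity of $m'$.

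Next, having chosen $\bar m'$, I would choose $\bar k' \in \ZZ/\ord(r_q^2)\ZZ$ (where $r_q$ has order $q$ in $D_q$) so that (a) $\bar k'$ reduces to the given $\bar k$ modulo $\ord(r_n^2)$, using that $\ord(r_n^2) \mid \ord(r_q^2)$, which follows from $n \mid \lcm(q,2)$ by a short computation, and (b) when $4 \mid q$, $\bar k' \equiv \varka(m')/2 \bmod 2$. Requirement (b) is a single congruence mod $2$; the only possible conflict is if $\ord(r_q^2)$ is even (i.e. $4 \mid q$) and the reduction map $\ZZ/\ord(r_q^2)\ZZ \to \ZZ/\ord(r_n^2)\ZZ$ does not already pin down $\bar k' \bmod 2$ incompatibly — but by Theorem~\ref{thm:GT-Kn-set} the original $\bar k$ itself satisfies $\bar k \equiv \varka(\bar m)/2 \bmod 2$ whenever $4 \mid n$, and the parities match by the choice of $m'$, so there is no obstruction. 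Then $(\bar m', (r^{2k'}, r^{-2k'}, r^{\varka(m')}))$ lies in $\GT(\K^{(q)})$ by Theorem~\ref{thm:GT-Kn-set}, and by construction its image under $\cR_{\K^{(q)}, \K^{(n)}}$ — which is just "reduce $m'$ mod $K^{(n)}_{\ord}$ and reduce the triple mod $\K^{(n)}_{\F_2}$" — equals the original shadow.

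The main obstacle I anticipate is the bookkeeping around parities and the $4 \mid n$ versus $4 \nmid n$ dichotomy: one has to verify carefully that the extra mod-$2$ constraint on $k'$ in the $4 \mid q$ case can always be met simultaneously with the reduction constraint, and that the branch of $\varka$ is consistent between $m$ and its lift $m'$. This amounts to checking a handful of cases according to the $2$-adic valuations of $n$ and $q$ (namely $v_2(n) \in \{0,1\}$ versus $v_2(n) \ge 2$, and likewise for $q$, subject to $n \mid \lcm(q,2)$), using Remark~\ref{rem:4-mid-n-4-nmid-n} to know exactly when the commutator subgroup imposes the "all even or all odd" condition versus the full product $\lan r^2\ran^3$. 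Everything else is routine: surjectivity of reduction on unit groups of $\ZZ/N\ZZ$, and the fact that $\cR_{\K^{(q)},\K^{(n)}}$ is literally induced by $(m,f) \mapsto (m,f)$.
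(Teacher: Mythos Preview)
Your approach is correct and matches the paper's in spirit: both proofs use the explicit parametrization of $\GT(\K^{(n)})$ from Theorem~\ref{thm:GT-Kn-set} to turn surjectivity of $\cR_{\K^{(q)},\K^{(n)}}$ into an elementary lifting problem for the parameters $(m,k)$, solved by CRT-type reasoning.

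The organizational difference is this. You propose to lift $\bar m$ first (to some $\bar m'$ with $2m'+1$ a unit mod $K^{(q)}_{\ord}$) and then lift $\bar k$ to a $\bar k'$ meeting the extra mod-$2$ constraint when $4\mid q$, postponing the compatibility check to a case analysis on $v_2(n)$ and $v_2(q)$. The paper instead keeps the \emph{same} integer $k$ and looks for a lift of the form $m' = m + zn$; the clever choice is $z = 4z_1$, which forces $m' \equiv m \bmod 4$ and hence $\varka(m')/2 \equiv \varka(m)/2 \bmod 2$, so the parity constraint on $k$ is automatically preserved and no case analysis on $v_2(n)$ is needed. What remains is only the coprimality $\gcd(2m'+1,q)=1$, and the paper further reduces to $q = np$ with $p$ prime so this becomes a single congruence. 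Your route works too, but the paper's $z = 4z_1$ trick buys a cleaner argument.

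One small point to tighten: your justification ``lift $\bar m$ because $(\ZZ/K^{(q)}_{\ord}\ZZ)^\times \to (\ZZ/K^{(n)}_{\ord}\ZZ)^\times$ is surjective'' is not quite what you need. Surjectivity on units lifts $2m+1$, but since $K^{(n)}_{\ord}$ is even this only pins down $m'$ modulo $K^{(n)}_{\ord}/2$, not modulo $K^{(n)}_{\ord}$. What you actually want is: given $m$ with $\gcd(2m+1,K^{(n)}_{\ord})=1$, there exists $m' \equiv m \bmod K^{(n)}_{\ord}$ with $\gcd(2m'+1,K^{(q)}_{\ord})=1$. This is still true (write $m' = m + t K^{(n)}_{\ord}$ and choose $t$ by CRT to avoid the finitely many odd primes dividing $K^{(q)}_{\ord}$ but not $K^{(n)}_{\ord}$), but it is a direct argument rather than a consequence of unit-group surjectivity.
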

\begin{proof} Let us assume that $4 \mid q$. Due to Proposition \ref{prop:structure-of-Dih}, 
$\K^{(q)} \le \K^{(n)}$ implies that $n \mid q$. 

It suffices to consider the sub-case when $q = n p$, where $p$ is prime. 
For the more general case, the result is achieved by composing reduction maps. 

Let $ (m, (r^{2 k}, r^{-2 k}, r^{\varkappa(m)})) \in \GT(\K^{(n)})$.  
We need to show that there exists $z \in \ZZ$ such that
\begin{itemize}
    \item $k \equiv \frac{\varkappa(m+zn)}{2} \mod 2$,
    \item $\gcd(2(m+zn)+1, q)=1$.
\end{itemize}
Then $(m+zn, (r^{2 k}, r^{-2 k}, r^{\varkappa(m+nz)})) \in \GT(\K^{(q)})$ and it 
gets sent to the element
$$
(m, (r^{2 k}, r^{-2 k}, r^{\varkappa(m)})) \in \GT(\K^{(n)}).
$$

Put $z = 4z_1$, then $m+zn = m+4z_1 n \equiv m \mod 4$, therefore by definition of 
$\varkappa: \frac{\varkappa(m+zn)}{2} \equiv k \mod 2$. 
So we are left with $\gcd(2(m+zn)+1,q) =\gcd(2m+8z_1 n +1, pn)$. 
Notice that $2m+1$ is coprime with $n$, therefore $\gcd(2m+1 + 8z_1 n, n)=1$. 
So we need to find an integer $z_1$ such that $\gcd(2m+8z_1 n +1, p)=1$. For this purpose, we need to consider 3 cases: \\ \\
\textit{Case 1.} $p \mid n$. Then the statement is obvious because component $2m+8z_1n+1$ 
is coprime with $n$ and so with $p$. \\
\textit{Case 2.} $p \nmid n, p=2$. Then $2m+8z_1n+1$ is simply odd. \\
\textit{Case 3.} $p \nmid n, p>2$. In this case, $\gcd(p, 8n) = 1$. Hence applying the Chinese Remainder 
Theorem to $\ZZ/ 8np \ZZ$, we see that there exists an integer $z_1$ for which $p \nmid  (2m+8z_1n+1)$.

\bigskip

We proved that the reduction homomorphism $\cR_{\K^{(q)}, \K^{(n)}}$ is surjective in the case 
when $4 \mid q$. 

If $4 \nmid q$ but $q$ is even, then Proposition \ref{prop:structure-of-Dih} still implies that $n \mid q$. 
In this case the proof of surjectivity of $\cR_{\K^{(q)}, \K^{(n)}}$ is easier and we leave it to the reader. 

If $q$ is odd, then Proposition \ref{prop:structure-of-Dih} implies that $n \mid 2q$. 
Due to Proposition \ref{prop:K-2n-n}, we have  $\GT(\K^{(q)}) = \GT(\K^{(2q)})$ and the 
desired statement follows from the surjectivity of the reduction
homomorphism $\cR_{\K^{(2q)}, \K^{(n)}} : \GT(\K^{(2q)}) \to \GT(\K^{(n)})$ established above. 
\end{proof}

Theorem \ref{thm:GTKn-redmap-strong} gives us the first supporting evidence for Conjecture 
\ref{conj:GT-shadows-are-arithmetical} formulated in Section \ref{sec:arithmetical}.  

\subsection{The group structure on $\GT(\K^{(n)})$}
\label{sec:GT-K-n-group}

Every $n \in \ZZ_{\ge 3}$ can be written uniquely in the from 
$$
n = 2^{\al} n_0,
$$
where $n_0$ is an odd positive integer and $\al \in  \ZZ_{\ge 0}$. 
Moreover, if $\al  \in \{0, 1\}$, then $n_0 \ge 3$. 

Due to Proposition \ref{prop:K-2n-n}, $\K^{(n_0)} =  \K^{(2 n_0)}$ for every odd integer $n_0 \ge 3$.
Hence the groups $\GT(\K^{(n_0)})$ and $\GT(\K^{(2 n_0)})$ coincide and we may only consider 
the case of even $n$ (i.e. $\al \ge 1$). For every even $n$, we have $K^{(n)}_{\ord} = n$; 
we also set $n_1 : = n/2$ and observe that $n_1 : = \ord(r^2)$ in $D_n$.

Recall that the multiplicative group $\big(\ZZ/2n\ZZ \big)^{\times}$ naturally acts 
on $\ZZ/n_1 \ZZ$. Hence we have the semi-direct product: 
\begin{equation}
\label{Zn1-rtimes-units}
\ZZ/n_1 \ZZ\rtimes \big(\ZZ/2n\ZZ \big)^{\times}\,.
\end{equation}

Due to the following proposition, the group $\GT(\K^{(n)})$ can be identified with a 
subgroup of \eqref{Zn1-rtimes-units}:
\begin{prop}
\label{prop:pair-multiplication}
Let $n$ be an even integer $\ge 4$ and $n_1: = n/2$. Let $\nu$ be the following 
function from the set of odd integers to $\ZZ$: 
\begin{equation}
\label{nu}
\nu(u) := 
\begin{cases}
\displaystyle \frac{u - 1}{4}  & \textrm{if} ~~ u \equiv 1 \mod 4 , \\[0.21cm]
\displaystyle \frac{u + 1}{4}  &  \textrm{if} ~~ u \equiv 3 \mod 4.
\end{cases} 
\end{equation}
If $4 \nmid n$, then the formula 
\begin{equation}
\label{vro}
\vro(m, (r^{2 k}, r^{-2 k}, r^{\varka(m)})) := \big(\, k + n_1\ZZ , \, (2m+1) + 2 n \ZZ \, \big)
\end{equation}
defines an isomorphism $\GT(\K^{(n)}) \iso \ZZ/n_1 \ZZ\rtimes \big(\ZZ/2n\ZZ \big)^{\times}$.  
If $4 \mid n$, then formula \eqref{vro} defines an isomorphism from the group 
$\GT(\K^{(n)})$ onto the following index $2$ subgroup of 
$\ZZ/n_1 \ZZ \rtimes \big(\ZZ/2n\ZZ \big)^{\times}$: 
\begin{equation}
\label{H-n}
H_n : = 
\big\{(\ol{k}, \ol{u}) \in \ZZ/n_1 \ZZ\rtimes \big(\ZZ/2n\ZZ \big)^{\times} ~|~ k \equiv \nu(u) \mod 2 \big\}.     
\end{equation}
\end{prop}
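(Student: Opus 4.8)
The plan is to prove directly that $\vro$ is an injective group homomorphism whose image is the asserted subgroup. By Theorem~\ref{thm:GT-Kn-set}, $\GT(\K^{(n)}) = \GTSh(\K^{(n)},\K^{(n)})$, so the group law is the composition~\eqref{composition}, and the crux is to compute, for $[m_1,f_1],[m_2,f_2]\in\GT(\K^{(n)})$ with images $f_i\mapsto(r^{2k_i},r^{-2k_i},r^{\varka(m_i)})$ in $G_n$, the first two coordinates of $f_1\,E_{m_1,f_1}(f_2)$ in $G_n$. First I would note that $E_{m_1,f_1}$ descends to the automorphism $T^{\F_2,\isom}_{m_1,f_1}$ of $G_n = \F_2/\K^{(n)}_{\F_2}$ (its kernel on $\F_2$ is $\K^{(n)}_{\F_2}$ by Lemma~\ref{GTKn-tmf-onto}); call it $\hat E$. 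Since $\varka(m_2)$ is even, \eqref{xyz-bar} gives $f_2 = (\ol x^{2})^{k_2}(\ol y^{2})^{-k_2}(\ol z^{2})^{\varka(m_2)/2}$ in $G_n$, so it is enough to apply $\hat E$ to $\ol x^2,\ol y^2,\ol z^2$. Using $\hat E(\ol x) = \ol x^{2m_1+1}$, $\hat E(\ol y) = f_1^{-1}\ol y^{2m_1+1}f_1$, $\hat E(\ol z)=\hat E(\ol y)^{-1}\hat E(\ol x)^{-1}$, the dihedral relations $sr=r^{-1}s$, $(rs)^{2}=1$, $(r^{a}s)^{2}=1$, and conjugation by $f_1$, a short computation gives $\hat E(\ol x^2) = (r^{2(2m_1+1)},1,1)$, $\hat E(\ol y^2) = (1,r^{2(2m_1+1)},1)$ and $\hat E(\ol z^2) = (1,1,r^{2(1-2\varka(m_1))})$; the mechanism is that $\hat E(\ol x)$, $\hat E(\ol y)$, $\hat E(\ol z)$ each carry an $s$-factor in two of their three coordinates, which vanishes upon squaring.

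Multiplying these powers together and then by $f_1$, one finds that the first two coordinates of $f_1\,E_{m_1,f_1}(f_2)$ are $r^{\pm 2(k_1+(2m_1+1)k_2)}$, while the $m$-entry of the composite is $2m_1m_2+m_1+m_2$, with $2m_3+1 = (2m_1+1)(2m_2+1)$. Since $\GT(\K^{(n)})$ is closed under composition the composite has the standard form, so $\vro([m_1,f_1]\circ[m_2,f_2]) = \big(k_1+(2m_1+1)k_2,\,(2m_1+1)(2m_2+1)\big)$, which is exactly the product $\vro([m_1,f_1])\cdot\vro([m_2,f_2])$ in $\ZZ/n_1\ZZ\rtimes(\ZZ/2n\ZZ)^{\times}$ (recall the action of $(\ZZ/2n\ZZ)^{\times}$ factors through $(\ZZ/n_1\ZZ)^{\times}$). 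Hence $\vro$ is a homomorphism. Injectivity is immediate: $\vro(m,f)=\vro(m',f')$ forces $2m\equiv 2m'\bmod 2n$, hence $m=m'$ as both lie in $\{0,\dots,K^{(n)}_{\ord}-1\}=\{0,\dots,n-1\}$, so $\varka(m)=\varka(m')$, and $k\equiv k'\bmod n_1=\ord(r^2)$ forces $(r^{2k},r^{-2k},r^{\varka(m)})=(r^{2k'},r^{-2k'},r^{\varka(m')})$.

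For surjectivity I would build preimages by hand rather than count. Given $\ol u\in(\ZZ/2n\ZZ)^{\times}$, choose an odd representative $u$ and the unique $m\in\{0,\dots,n-1\}$ with $2m+1\equiv u\bmod 2n$; then $\gcd(2m+1,n)=\gcd(u,2n)=1$, so $m\in\cX_n$. If $4\nmid n$, then $(m,(r^{2k},r^{-2k},r^{\varka(m)}))\in\GT(\K^{(n)})$ for every $k$ and maps to $(\ol k,\ol u)$, giving surjectivity onto all of~\eqref{Zn1-rtimes-units}. If $4\mid n$, this pair lies in $\GT(\K^{(n)})$ exactly when $k\equiv\varka(m)/2\bmod 2$; checking the cases $m$ even and $m$ odd shows $\varka(m)/2\equiv\nu(2m+1)\bmod 2$, and since $8\mid 2n$ the residue $\nu(u)\bmod 2$ depends only on $u\bmod 8$, so this is precisely the defining condition of $H_n$. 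Thus $\vro\big(\GT(\K^{(n)})\big)=H_n$; being the image of a homomorphism $H_n$ is a subgroup, and it has index $2$ because $(\ol k,\ol u)\mapsto k+\nu(u)\bmod 2$ is a surjective homomorphism $\ZZ/n_1\ZZ\rtimes(\ZZ/2n\ZZ)^{\times}\to\ZZ/2\ZZ$ with kernel $H_n$ — its multiplicativity is the classical fact that $u\mapsto(-1)^{(u^{2}-1)/8}=(-1)^{\nu(u)}$ is a character of $(\ZZ/8\ZZ)^{\times}$, well defined here since $8\mid 2n$.

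The step I expect to be the real obstacle is the coordinatewise computation of $\hat E$ on $\ol x^2,\ol y^2,\ol z^2$: one must juggle the conjugation by $f_1$, the parity of $m_1$ hidden inside $\varka(m_1)$, and the three dihedral coordinates simultaneously. A related nuisance is bookkeeping the third coordinate of $f_1\,E_{m_1,f_1}(f_2)$ so as to confirm it matches $r^{\varka(2m_1m_2+m_1+m_2)}$; this consistency is not logically required (it follows from $\GT(\K^{(n)})$ being a group, Theorem~\ref{thm:GT-Kn-set}), but it is the natural sanity check and the place a sign error would most likely surface.
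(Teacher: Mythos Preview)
Your argument is correct and follows essentially the same route as the paper: compute the image of $E_{m_1,f_1}(f_2)$ in $G_n$ by acting on the commuting generators $\ol{x}^2,\ol{y}^2,\ol{z}^2$, read off that the first coordinate of $\vro$ transforms as $k_1+(2m_1+1)k_2$, and handle the third coordinate via closure of $\GT(\K^{(n)})$ under composition (the paper records this shortcut too, alongside a direct case check of the $\varka$-identity). Your treatment of the image when $4\mid n$ matches the paper's, and your index-$2$ argument via the character $(\ol k,\ol u)\mapsto(-1)^{k+\nu(u)}$, identified with the classical $(-1)^{(u^2-1)/8}$ on $(\ZZ/8\ZZ)^{\times}$, is a pleasant sharpening of what the paper leaves as ``a simple counting argument.''
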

\begin{proof}
In this proof and further in the paper, we denote elements of the group
$\ZZ/n_1 \ZZ \rtimes \big(\ZZ/2n\ZZ \big)^{\times}$ by pairs 
$(\ol{k}, \ol{u})$, where $\ol{k}$ is a residue class modulo $n_1 := n/2$ and
$\ol{u}$ is a unit of the ring $\ZZ/2n\ZZ$. Clearly, an integer $u$ that represents 
$\ol{u}$ is odd.  

Since the integer $m$ in considered modulo $K^{(n)}_{\ord} = n$, it is natural to 
consider $2 m + 1$ modulo $2n$. Moreover, since $\gcd(2m+1, n) = 1$, the integer 
$2m + 1$ is also coprime to $2n$ and the map 
$$
m + n \ZZ ~\mapsto~ 2 m + 1 + 2 n \ZZ
$$
is clearly injective. Thus formula \eqref{vro}  defines an injective map 
from $\GT(\K^{(n)})$ to the group in \eqref{Zn1-rtimes-units}. 

To show that $\vro$ is a group homomorphism, we note that $\GT$-shadows
$(m_1, (r^{2 k_1}, r^{-2 k_1}, r^{\varka(m_1)}))$ and 
$(m_2, (r^{2 k_2}, r^{-2 k_2}, r^{\varka(m_2)}))$ are represented by 
pairs $(m_1, f_1), (m_2, f_2) \in \ZZ \times \F_2$, respectively, where
$$
f_1 := x^{2 k_1} y^{-2k_1} z^{\varka(m_1)}, \qquad \txt{and}
\qquad
f_2 := x^{2 k_2} y^{-2k_2} z^{\varka(m_2)}.
$$

Due to  \eqref{composition}, the $\GT$-shadow
$[m_1, f_1] \circ [m_2, f_2]$ is represented by the pair 
$$
\big(2 m_1 m_2 +m_1 + m_2, f_1 E_{m_1,f_1}(f_2) \big).
$$

To find the element of $G_n$ corresponding to the coset 
$f_1 E_{m_1,f_1}(f_2)\K^{(n)}_{\F_2}$, we need to evaluate 
$$
\psi_n\big( f_1 E_{m_1,f_1}(f_2) \big) \in G_n\,.
$$ 

A direct calculation
shows that 
$$
\psi_n( E_{m_1, f_1}(z)) = (r^{2 + 2 m_1 - 4 k_1} s, r^{-(2m_1 + 1)} s, r^{1 - 2\varka(m_1)}).
$$
Since $\varka(m_2)$ is even, we get 
\begin{equation}
\label{psi-n-Em1f1-z-varka-m2}
\psi_n \big(E_{m_1, f_1}(z^{\varka(m_2)}) \big) = 
(1,\, 1,\, r^{(1- 2\varka(m_1)) \varka(m_2)} ) = \psi_n(z^{(1-2\varka(m_1)) \varka(m_2)}).
\end{equation}

Since the images of $x^2$, $y^2$ and $z^2$ in $G_n$ commute with each other, 
identity \eqref{psi-n-Em1f1-z-varka-m2} implies that 
\begin{equation}
\label{psi-n-f}
\psi_n\big( f_1 E_{m_1,f_1}(f_2) \big)  = 
(r^{2 (k_1 + u_1 k_2)},\, r^{-2(k_1 + u_1 k_2)},\, r^{\varka(m_1) + \varka(m_2) - 2 \varka(m_1) \varka(m_2)}),
\end{equation}
where $u_1 := 2 m_1 + 1$. 

To conclude that $\vro$ is a group homomorphism, it remains to show that 
\begin{equation}
\label{unit-identity}
(2m_1+1) (2m_2+1) = 2 m + 1,
\end{equation}
\begin{equation}
\label{varka-identity}
\varka(m_1) + \varka(m_2) - 2 \varka(m_1) \varka(m_2) = \varka(m), 
\end{equation}
where $m := 2m_1 m_2 + m_1 + m_2$.

It is easy to verify identity \eqref{unit-identity} directly.
Here are two ways to prove \eqref{varka-identity}. First, since the pair 
$(m,  f_1 E_{m_1,f_1}(f_2) )$ represents a $\GT$-shadow with the target $\K^{(n)}$, 
equation \eqref{hexa11-m-g} (with $g:= \psi_n(f_1 E_{m_1,f_1}(f_2))$) is satisfied. 
Hence the exponent $\varka(m_1) + \varka(m_2) - 2 \varka(m_1) \varka(m_2)$ in 
\eqref{psi-n-f} must equal $\varka(m)$ modulo $n$ (see the proof of Proposition 
\ref{prop:GT-pr-charm}).

Second, identity \eqref{varka-identity} can be verified directly by considering the four 
cases: $m_1$ and $m_2$ are even, $m_1$ and $m_2$ are odd, $m_1$ is even and $m_2$ is 
odd, $m_1$ is odd and $m_2$ is even. Since \eqref{varka-identity} is symmetric with respect to 
$m_1$ and $m_2$, the fourth case may be ignored. 

Let us verify \eqref{varka-identity} in the case when $m_1$ is even and $m_2$ is odd. 
In this case, $m := 2m_1 m_2 + m_1 + m_2$ is odd, hence $\varka(m) = (2m_1 m_2 + m_1 + m_2) + 1$.
As for the left hand side of  \eqref{varka-identity}, we have 
\begin{align*}
\varka(m_1) + \varka(m_2) - 2 \varka(m_1) \varka(m_2) = - m_1 + (m_2+1) - 2(-m_1) (m_2+1) & =\\
-m_1 + m_2 +1 + 2 m_1 m_2 + 2m_1 =  2 m_1 m_2  + m_1 + m_2 +1.
\end{align*}
Thus, in this case, identity \eqref{varka-identity} holds. 

It remains to prove that 
\begin{itemize}

\item the homomorphism $\vro$ is surjective if $4 \nmid n$, and 

\item the image of $\vro$ is the index $2$ subgroup 
$H_n$ of $\ZZ/n_1 \ZZ\rtimes \big(\ZZ/2n\ZZ \big)^{\times} $ (see 
\eqref{H-n}) if $4 \mid n$.

\end{itemize}

If an integer $u$ represents a unit in $\ZZ/2n\ZZ$, then $u$ is odd. 
Hence there exists $m \in \ZZ$ such that $u = 2 m+1$. Due to the second case 
in \eqref{GT-Kn}, the homomorphism 
$\vro: \GT(\K^{(n)}) \to \ZZ/n_1 \ZZ\rtimes \big(\ZZ/2n\ZZ \big)^{\times}$ is indeed
surjective. 

Let us now take care of the case when $4 \mid n$. 
Again, since an integer $u$ that represents a unit in $\ZZ/2n\ZZ$, 
$u = 2 m + 1$. A direct calculation shows that, an integer $k$ satisfies 
$$
k \equiv \nu(u) \mod 2 
$$ 
if and only if $k \equiv \varka(m)/2 \mod 2$. Thus $\vro\big(\GT(\K^{(n)}) \big)$ indeed 
coincides with $H_n$. 

Finally, a simple counting argument shows that $H_n$ has index $2$ in 
$\ZZ/n_1 \ZZ\rtimes \big(\ZZ/2n\ZZ \big)^{\times}$.
\end{proof}

To get more information about the group $\GT(\K^{(n)})$, we note that, 
due to the Chinese Remainder Theorem, 
\begin{equation}
\label{CRT}
\ZZ/n_1 \ZZ\rtimes \big(\ZZ/2n\ZZ \big)^{\times} \cong
\Aff(\ZZ/n_0\ZZ) \times \big( \ZZ/2^{\al - 1} \ZZ  \rtimes  \big(\ZZ/2^{\al + 1}\ZZ \big)^{\times}  \big).
\end{equation}

Furthermore, if $\al \ge 2$, then Theorem 2' in \cite[Chapter 4, Section 1]{Ireland-Rosen}
implies that 
\begin{equation}
\label{ZZ-mod-2alpha-p-1}
\big(\ZZ/2^{\al + 1}\ZZ \big)^{\times} \cong \lan -\ol{1} \ran  \times \lan \ol{5} \ran, 
\end{equation}
with $\ord(-\ol{1}) = 2$ and $\ord(\ol{5}) = 2^{\al - 1}$. 

The following theorem is a natural corollary of Proposition \ref{prop:pair-multiplication}: 
\begin{thm}
\label{thm:GT-Kn-group}
Let $n \in \ZZ_{\ge 3}$, $n = n_0 \cdot 2^\alpha$, with $n_0$ being odd and $\alpha \in \ZZ_{\ge 0}$. 
Then the following holds:
\begin{equation}
\label{GT-general-eq}
\GT(\K^{(n)}) \cong 
\begin{cases}
\Aff(\ZZ/n_0\ZZ) \times \cZ_2 & \text{ if } \alpha < 2; \\[0.21cm]
\Aff(\ZZ/n_0\ZZ) \times \ti{H}_{\al} & \text{ if } \alpha \ge 2, 
\end{cases}
\end{equation}
where $\cZ_2$ is the cyclic group of order $2$ and $\ti{H}_{\al}$ is the following index $2$ 
subgroup of $\ZZ/2^{\alpha-1}\ZZ \rtimes \big(\ZZ/2^{\alpha+1}\ZZ \big)^{\times}$: 
\begin{equation}
\label{tilde-H-alpha}
\ti{H}_{\al} := \big\{  (\ol{k}, (-\ol{1})^{a} \ol{5}^b) \in 
\ZZ/2^{\alpha-1}\ZZ \rtimes \big(\ZZ/2^{\alpha+1}\ZZ \big)^{\times} ~|~ k \equiv b \mod 2 \big\}.
\end{equation}
\end{thm}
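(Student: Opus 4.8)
The plan is to derive the theorem directly from Proposition \ref{prop:pair-multiplication}, the Chinese Remainder decomposition \eqref{CRT}, and the structure result \eqref{ZZ-mod-2alpha-p-1} for the group of units of $\ZZ/2^{\al+1}\ZZ$; the only genuine computation will be an elementary congruence check.

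First I would dispose of the case $\al = 0$. If $n = n_0$ is odd, then $\K^{(n)} = \K^{(2 n_0)}$ by Proposition \ref{prop:K-2n-n}, so $\GT(\K^{(n)}) = \GT(\K^{(2 n_0)})$ and it suffices to treat $n = 2 n_0$, i.e. the case $\al = 1$ with the same odd part $n_0$. Next, the case $\al = 1$: here $n = 2 n_0$ with $n_0$ odd, hence $4 \nmid n$, and Proposition \ref{prop:pair-multiplication} gives $\GT(\K^{(n)}) \cong \ZZ/n_0\ZZ \rtimes (\ZZ/4 n_0\ZZ)^{\times}$. Since the CRT isomorphism $(\ZZ/4 n_0\ZZ)^{\times} \cong (\ZZ/n_0\ZZ)^{\times} \times (\ZZ/4\ZZ)^{\times}$ carries the action on $\ZZ/n_0\ZZ$ onto the projection to the first factor (the $(\ZZ/4\ZZ)^{\times}$-part acting trivially), the semidirect product splits as $\big(\ZZ/n_0\ZZ \rtimes (\ZZ/n_0\ZZ)^{\times}\big) \times (\ZZ/4\ZZ)^{\times} = \Aff(\ZZ/n_0\ZZ) \times \cZ_2$, which is the first line of \eqref{GT-general-eq}. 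This is the degenerate case $\al - 1 = 0$ of \eqref{CRT}, where the $2$-primary factor collapses to $(\ZZ/4\ZZ)^{\times} \cong \cZ_2$.

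Finally, the case $\al \ge 2$: then $4 \mid n$, so by Proposition \ref{prop:pair-multiplication} we have $\GT(\K^{(n)}) \cong H_n$ inside $\ZZ/n_1\ZZ \rtimes (\ZZ/2n\ZZ)^{\times}$, and \eqref{CRT} identifies the ambient group with $\Aff(\ZZ/n_0\ZZ) \times \big(\ZZ/2^{\al-1}\ZZ \rtimes (\ZZ/2^{\al+1}\ZZ)^{\times}\big)$. The key observation is that the defining congruence $k \equiv \nu(u) \bmod 2$ of $H_n$ depends only on $k \bmod 2$ and on $u \bmod 8$, hence (using $\al - 1 \ge 1$ and $\al + 1 \ge 3$) only on the images of $k$ in $\ZZ/2^{\al-1}\ZZ$ and of $u$ in $(\ZZ/2^{\al+1}\ZZ)^{\times}$; so under the CRT identification $H_n$ becomes $\Aff(\ZZ/n_0\ZZ) \times \{(\ol{k}, \ol{u}) : k \equiv \nu(u) \bmod 2\}$. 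To recognize the second factor as $\ti{H}_{\al}$, I would write $u = (-\ol 1)^{a}\, \ol 5^{\,b}$ via \eqref{ZZ-mod-2alpha-p-1} and verify $\nu(u) \equiv b \bmod 2$; using $\ol 5 \equiv 5$, $\ol 5^{\,2} \equiv 1$, $-\ol 1 \equiv 7 \bmod 8$, one checks the four parities of $(a,b)$ (equivalently the residues $1,3,5,7 \bmod 8$) and finds $\nu(u) \equiv 0$ when $b$ is even and $\nu(u) \equiv 1$ when $b$ is odd. This turns $k \equiv \nu(u) \bmod 2$ into $k \equiv b \bmod 2$, which is exactly \eqref{tilde-H-alpha}. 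The index-$2$ assertion for $\ti{H}_{\al}$ then follows from the index-$2$ assertion for $H_n$ proved at the end of Proposition \ref{prop:pair-multiplication}, since the $\Aff(\ZZ/n_0\ZZ)$ factor is unconstrained.

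The only real work is the identity $\nu\big((-\ol 1)^{a}\,\ol 5^{\,b}\big) \equiv b \bmod 2$, and this is the (entirely elementary) main obstacle; everything else is bookkeeping with the Chinese Remainder Theorem.
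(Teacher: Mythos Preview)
Your proof is correct and follows essentially the same route as the paper: reduce $\al=0$ to $\al=1$ via Proposition~\ref{prop:K-2n-n}, handle $\al=1$ by the $4\nmid n$ case of Proposition~\ref{prop:pair-multiplication} plus CRT, and for $\al\ge 2$ push the defining congruence of $H_n$ through \eqref{CRT} to the $2$-primary factor and rewrite $k\equiv\nu(u)\bmod 2$ as $k\equiv b\bmod 2$. The only cosmetic difference is that you verify $\nu\big((-\ol 1)^a\ol 5^{\,b}\big)\equiv b\bmod 2$ by checking the four residues of $u$ modulo $8$, whereas the paper uses the geometric-series identity $(5^b-1)/4=1+5+\dots+5^{b-1}\equiv b\bmod 2$; both arguments are equally elementary.
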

\begin{proof}
Recall that, due to Proposition \ref{prop:K-2n-n}, $\K^{(n_0)} =  \K^{(2 n_0)}$.
Hence, $\GT(\K^{(n_0)}) = \GT(\K^{(2 n_0)})$ for every odd integer $n_0 \ge 3$. 

Due to Proposition \ref{prop:pair-multiplication}, 
$\GT(\K^{(2 n_0)}) \cong \ZZ/n_0\ZZ \rtimes \big( \ZZ/4n_0\ZZ \big)^{\times}$. Hence, 
the Chinese Remainder Theorem implies that
$$
\GT(\K^{(2 n_0)}) \cong \Big(\ZZ/n_0\ZZ \rtimes \big( \ZZ/n_0\ZZ \big)^{\times} \Big) \times 
\big( \ZZ/4\ZZ \big)^{\times}  \cong 
\Aff(\ZZ/n_0\ZZ) \times \cZ_2\,.
$$
Thus the desired statement is proved in the case when $\al < 2$. 

Let us assume that $\al \ge 2$. Due to  Proposition \ref{prop:pair-multiplication}, 
the group $\GT(\K^{(n)})$ is isomorphic to the subgroup 
$H_n \le \ZZ/2^{\al-1} n_0 \ZZ \rtimes \big(\ZZ/2^{\al+1} n_0\ZZ \big)^{\times}$  defined in \eqref{H-n}. 

Let  $\big( k + 2^{\al-1} n_0 \ZZ ,\, u + 2^{\al+1} n_0\ZZ  \big)$ be an element of $H_n$. 
Since the defining condition of $H_n$ is formulated in the ring $\ZZ/2\ZZ$, we only need to focus 
on the residue classes of $k$ (resp. $u$) in $\ZZ/2^{\al-1} \ZZ$ (resp. in $\ZZ/2^{\al+1} \ZZ$). 
So, for the remainder of the proof, we set  
$$
\ol{k} := k + 2^{\al-1} \ZZ, \qquad 
\ol{u} := u + 2^{\al+1} \ZZ. 
$$ 

If $\ol{u} = \ol{5}^b$, then $u \equiv 1 \mod 4$ and
$$
\frac{u-1}{4} \equiv 1+5 + \dots + 5^{b-1} \equiv b \mod 2.
$$
If $\ol{u} = - \ol{5}^b$, then $u \equiv -1 \mod 4$ and
$$
\frac{u + 1}{4} \equiv  - (1+5 + \dots + 5^{b-1}) \equiv b \mod 2.
$$  

Thus, for $\ol{u} =  (-\ol{1})^{a} \ol{5}^b$, the condition $k \equiv \nu(u)$
is equivalent to $k \equiv b \mod 2$.   

Combining this observation with Proposition \ref{prop:pair-multiplication} 
and equation \eqref{CRT}, we conclude that
$$
\GT(\K^{(n)}) \cong \Aff(\ZZ/n_0\ZZ) \times \ti{H}_{\al},
$$
where $\ti{H}_{\al}$ is defined in \eqref{tilde-H-alpha}.

Note that $\ti{H}_{\al} = H_{2^{\al}} \le \ZZ/2^{\alpha-1}\ZZ \rtimes \big(\ZZ/2^{\alpha+1}\ZZ \big)^{\times}$, 
where $H_{n}$ is defined in \eqref{H-n}. Thus Proposition \ref{prop:pair-multiplication} also 
implies that $\ti{H}_{\al}$ is an index $2$ subgroup of
$\ZZ/2^{\alpha-1}\ZZ \rtimes \big(\ZZ/2^{\alpha+1}\ZZ \big)^{\times}$.
\end{proof}

Due to Theorem \ref{thm:GT-Kn-group}, the group $\GT(\K^{(2^{\al})})$ is isomorphic 
to the subgroup $\ti{H}_{\al}$ of $\ZZ/2^{\alpha-1}\ZZ \rtimes \big(\ZZ/2^{\alpha+1}\ZZ \big)^{\times}$
(see \eqref{tilde-H-alpha}).

\section{Non-abelian quotients of $\GTh$ that receive surjective homomorphisms from $G_{\QQ}$}
\label{sec:arithmetical}

In this section, we show that, for each positive integer $n$ divisible by $3$, every
$\GT$-shadow with the target $\K^{(n)}$ satisfies the Lochak-Schneps conditions
(see Corollary \ref{cor:LS-conditions}). Using the surjectivity of the cyclotomic 
character, we find a lower bound for the order 
of the subgroup $\GT_{arith}(\K^{(n)})$ of arithmetical $\GT$-shadows with 
the target $\K^{(n)}$. We prove that every 
$\GT$-shadow with the target $\K^{(2^{\alpha})}$ (with $\al \ge 2$) is arithmetical.
Hence the finite group $\GT(\K^{(2^{\alpha})})$ is naturally a quotient of $G_{\QQ}$ and 
a quotient of $\GTh$. We show that the groups $\GT(\K^{(2^{\alpha})})$ (for $\al \ge 2$)
assemble into an infinite (non-abelian) profinite group and this profinite group receives 
the surjective homomorphism from $G_{\QQ}$ and the surjective homomorphism from the original
version $\GTh$ of the Grothendieck-Teichmueller group. Finally, we identify this 
profinite group with a concrete subgroup of $\Aff(\ZZ_2)$. 

We should mention that there are no direct dependencies between 
Theorem \ref{thm:LS-conditions} (given below) and the rest of this section. However, this result is the 
first instance of applying the ideas of \cite{LochakSchneps-CohomInt} to $\GT$-shadows 
and it is interesting in its own right. Together with 
Theorems \ref{thm:GTKn-redmap-strong} and \ref{thm:main}, Theorem \ref{thm:LS-conditions} 
gives us a supporting evidence for the following conjecture: 
\begin{conj}
\label{conj:GT-shadows-are-arithmetical} 
For every element $\K$ of the dihedral poset $\Dih$, each $\GT$-shadow 
with the target $\K$ is arithmetical. In particular, for every $\K \in \Dih$, the finite group
$\GT(\K)$ is a quotient of $G_{\QQ}$ and a quotient of the 
Grothendieck-Teichmueller group $\GTh$.  
\end{conj}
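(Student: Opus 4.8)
\medskip\noindent
\emph{Proof strategy for Conjecture~\ref{conj:GT-shadows-are-arithmetical}.}
The plan is to prove the equivalent assertion that $\Ih_{\K^{(n)}}:G_{\QQ}\to\GT(\K^{(n)})$ is surjective for every $n\in\ZZ_{\ge 3}$; the remaining claims then follow at once, since $\GT(\K^{(n)})$ would be a quotient of $G_{\QQ}$ and, as $\PR_{\K^{(n)}}$ restricts to $\GTh\subset\GTh_{gen}$, also a quotient of $\GTh$. By Proposition~\ref{prop:K-2n-n} one may assume $n=2^{\al}n_0$ is even ($n_0$ odd, $\al\ge 1$). The case $n_0=1$, $\al\ge 2$ is Theorem~\ref{thm:main} and the case $n=2$ is trivial, so the genuinely new instances are those with $n_0>1$.

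\emph{Step 1 (the linear part).} Given a target $\GT$-shadow $(m,(r^{2k},r^{-2k},r^{\varka(m)}))\in\GT(\K^{(n)})$, surjectivity of the cyclotomic character produces $g_1\in G_{\QQ}$ with $\chi(g_1)\equiv 2m+1\bmod 2n$; then $\Ih_{\K^{(n)}}(g_1)$ has first parameter $m$, in agreement with diagram~\eqref{Ih-N-chi-vir-N}. Multiplying $g_1$ on the right by an element of the open subgroup $U:=\ker(\chi\bmod 2n)\le G_{\QQ}$ leaves the first parameter unchanged and moves the second parameter $k$ inside its fibre, a coset of the ``translation subgroup'' of $\GT(\K^{(n)})$, i.e. the fibre over $m=0$, which by Theorem~\ref{thm:GT-Kn-set} consists of the $\GT$-shadows $(0,(r^{2k},r^{-2k},1))$. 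Hence surjectivity of $\Ih_{\K^{(n)}}$ is equivalent to surjectivity of the homomorphism $k(\cdot):U\to\ZZ/(n/2)\ZZ$, $g\mapsto k(g)$, where $\Ih_{\K^{(n)}}(g)=(0,(r^{2k(g)},r^{-2k(g)},1))$ and $k(g)$ is read off from the image of Ihara's element $f_g\in\wh{\F}_2$ in $\F_2/\K^{(n)}_{\F_2}$.

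\emph{Step 2 (the translation part --- the crux).} One must show that $k(\cdot):U\to\ZZ/(n/2)\ZZ$ is surjective, which is a statement about the Galois action on a metabelian quotient of $\pi_1\big(\PP^1_{\Qbar}-\{0,1,\infty\}\big)$. The quotient $\F_2\to D_n$ underlying $\psi_n$ sends $x,y$ to reflections and $z=(xy)^{-1}$ to the rotation $r$, so it is precisely the $(2,2,n)$ triangle quotient, and $k(\cdot)$ is the Kummer-type character attached to the corresponding rigid dihedral Belyi cover. The plan is to compute $k(g)$ explicitly --- identifying it, in the spirit of the Anderson--Ihara description of the Galois action on pro-dihedral quotients of the projective line minus three points, with a reduction of a Kummer character on a cyclotomic field --- and to deduce surjectivity from surjectivity of the associated Kummer pairing; the Lochak--Schneps conditions (Theorem~\ref{thm:LS-conditions}) together with the explicit list in Theorem~\ref{thm:GT-Kn-set} should serve to pin down exactly which values $k$ can take.

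\emph{Step 3 and the main obstacle.} Since the Chinese Remainder decomposition~\eqref{CRT} of $\GT(\K^{(n)})$ need not be respected coordinatewise by $\Ih_{\K^{(n)}}$, Steps~1--2 must be run for the full modulus $n$ at once rather than prime by prime; once surjectivity is established for all $n$, Theorem~\ref{thm:GTKn-redmap-strong} and \eqref{cR-H-N-to-arithmetical} make the family $\{\GT_{arith}(\K^{(n)})\}$ coherent across $\Dih$. The main difficulty is Step~2: the cyclotomic character alone yields only the lower bound on $|\GT_{arith}(\K^{(n)})|$ behind Theorem~\ref{thm:main}, and that bound is tight exactly when the translation parameter is forced, which happens precisely for $n$ a power of $2$. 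For general $n$ the required new input is the surjectivity of the dihedral Kummer character, which is invisible to $\chi$, and this is why the statement is at present only a conjecture.
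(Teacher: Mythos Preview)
The statement under consideration is Conjecture~\ref{conj:GT-shadows-are-arithmetical}, and the paper does \emph{not} prove it: it is presented as an open problem, with Theorems~\ref{thm:GTKn-redmap-strong}, \ref{thm:LS-conditions} and \ref{thm:main} offered only as supporting evidence. Your proposal is, appropriately, not a proof either but a strategy sketch, and you explicitly flag in Step~3 that the crucial Step~2 is the reason the statement remains conjectural. In that sense your write-up is accurate and honest about its own limits.

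Your reduction in Step~1 is essentially the same mechanism the paper exploits in the proof of Theorem~\ref{thm:main}: surjectivity of $\chi$ (via $\chi_{vir,\K^{(n)}}$) fixes the $m$-coordinate, and the remaining freedom lies in the fibre over $m=0$. One small correction: when $4\mid n$, the fibre over $m=0$ in $\GT(\K^{(n)})$ is not all of $\ZZ/(n/2)\ZZ$ but only the even residues (because of the parity constraint $k\equiv\varka(0)/2\equiv 0\bmod 2$ in Theorem~\ref{thm:GT-Kn-set}), so the target of your map $k(\cdot)$ should be adjusted accordingly. This does not affect the substance of your outline.

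Where your proposal goes beyond the paper is in Step~2, where you suggest interpreting $k(\cdot)$ as a Kummer-type character attached to a dihedral Belyi cover and invoking Anderson--Ihara-style descriptions of the Galois action on metabelian quotients. This is a plausible line of attack and is genuinely new relative to the paper, which makes no such attempt; but as you yourself say, it is not carried out, and the required surjectivity of that Kummer character is precisely the missing arithmetic input. So the proposal correctly isolates the gap but does not close it --- which is consistent with the paper's own position that the statement is, for now, a conjecture.
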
  

First, we address the Lochak-Schneps conditions:
\begin{thm}
\label{thm:LS-conditions}
Let $n$ be a positive integer divisible by $3$. 
For every $[m, f] \in \GT(\K^{(n)})$ there exist $g, h \in \F_2$ such that
\begin{equation}
\label{LS-conditions}
f\, \K^{(n)}_{\F_2} ~ = ~  \te(g)^{-1} g \, \K^{(n)}_{\F_2}\,, \qquad 
 f x^m \, \K^{(n)}_{\F_2}  ~ = ~ 
\begin{cases}
\tau(h)^{-1} h \, \K^{(n)}_{\F_2}, & \text{if}~~ m \equiv 0 \mod 3,\medskip\\
\tau(h)^{-1}  xy h\, \K^{(n)}_{\F_2}, & \text{if}~~ m \equiv -1 \mod 3,
\end{cases}
\end{equation}
where $\te$ and $\tau$ are the automorphisms of $\F_2$ defined in \eqref{theta} and \eqref{tau}.
\end{thm}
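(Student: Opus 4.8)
The plan is to verify both conditions of \eqref{LS-conditions} directly inside the finite quotient $G_n:=\F_2/\K^{(n)}_{\F_2}$; we cannot simply appeal to Corollary~\ref{cor:LS-conditions}, since it is not known that every $\GT$-shadow with target $\K^{(n)}$ is genuine. Since $3\mid n$, we have $3\mid K^{(n)}_{\ord}=\lcm(n,2)$, so $\gcd(2m+1,K^{(n)}_{\ord})=1$ forces $m\not\equiv 1\mod 3$; hence every $[m,f]\in\GT(\K^{(n)})$ falls into exactly one of the cases $m\equiv 0\mod 3$ or $m\equiv -1\mod 3$, which are the two cases in \eqref{LS-conditions}. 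Because $\F_2\twoheadrightarrow G_n$ is surjective, it suffices to produce $\ol g,\ol h\in G_n$ with $\te(\ol g)^{-1}\ol g=\ol f$ and $\ol f\,\ol x^{\,m}=\tau(\ol h)^{-1}\ol h$ (resp.\ $\tau(\ol h)^{-1}\ol x\ol y\,\ol h$), where $\ol f=(r^{2k},r^{-2k},r^{\varka(m)})$ is the element of $G_n$ attached to $f$ by Theorem~\ref{thm:GT-Kn-set}, and then lift $\ol g,\ol h$ along $\F_2\twoheadrightarrow G_n$. Throughout I would use $\te(\ol x)=\ol y$, $\tau(\ol x)=\ol y$, $\tau(\ol y)=\ol z$, $\tau(\ol z)=\ol x$, the identities $\ol x^2=(r^2,1,1)$, $\ol z^2=(1,1,r^2)$, $\te(\ol z)=(s,rs,r^{-1})$, and formulas \eqref{theta-nice}, \eqref{tau-nice} describing $\te$ and $\tau$ on $\langle r^2\rangle^3$.

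For the first equation the choice of $\ol g$ is short: recalling that $\varka(m)$ is always even, take $\ol g:=\ol x^{2k}\ol z^{\varka(m)/2}$ when $\varka(m)/2$ is even and $\ol g:=\ol x^{-2k-2}\ol z^{\varka(m)/2}$ when $\varka(m)/2$ is odd (when $4\nmid n$ one may adjust $\varka(m)/2$ by $\ord(r^2)$, which is then odd, to the even case). A one-line computation with the formulas above gives $\te(\ol g)^{-1}\ol g=\ol f$; the parity case split corresponds to the parity of $\varka(m)/2$, consistently with the congruence $k\equiv \varka(m)/2\mod 2$ built into $\GT(\K^{(n)})$ when $4\mid n$.

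The bulk of the work is the second equation. First I would compute $\ol f\,\ol x^{\,m}$ in $G_n$: it equals $(r^{2k+m},r^{-2k},r^{-m})$ when $m$ is even and $(r^{2k+m},r^{-2k}s,r^{m+1}s)$ when $m$ is odd. Then I would look for $\ol h$ as a short word $\ol x^{a}\ol y^{b}\ol z^{c}$ (possibly preceded by $\ol x$ or $\ol y$), so that $\tau(\ol h)^{-1}\ol h=\ol f\,\ol x^{\,m}$ (resp.\ with the extra left factor $\ol x\ol y$) becomes a small system of congruences in $a,b,c$ modulo $\ord(r)$ and $\ord(r^2)$; I would solve this system, splitting into the cases given by the parity of $m$ and by $m\mod 3$. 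The crucial input is the hexagon relation \eqref{hexa11-m-g}: it says exactly that $\ol y^{\,m}\ol f$ has trivial $\tau$-norm, and its consequence for $\varka(m)$ (Proposition~\ref{prop:GT-pr-charm}) is what makes the system consistent --- concretely, it forces the ``$\langle r\rangle$-coordinate sum'' that obstructs solvability to vanish, and in the case $m\equiv-1\mod 3$ to be absorbed precisely by the twist $\ol x\ol y$.

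I expect the main obstacle to be this last step when $12\mid n$, i.e.\ when $\ord(r^2)=n/2$ is divisible by both $2$ and $3$: then $\ol h$ cannot be taken inside the abelian subgroup $\langle r^2\rangle^3$, the twist by $\ol x\ol y$ genuinely contributes, and one has to check carefully that the obstruction --- a residue modulo $3$ together with a parity, both determined by $m$ --- is killed by the hexagon relation and by the exclusion of $m\equiv 1\mod 3$. Once the two equations are solved in $G_n$, lifting $\ol g$ and $\ol h$ to elements $g,h\in\F_2$ finishes the proof.
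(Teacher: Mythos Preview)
Your approach is essentially the same as the paper's: work in $G_n$, split into cases, and exhibit explicit $\ol g,\ol h$ whose $\te$- and $\tau$-twists give $\ol f$ and $\ol f\,\ol x^{\,m}$. Your formula for $\ol g$ is a minor variant of the paper's (the paper uses $y^{2k+2}z^{\varka(m)/2}$ rather than $x^{-2k-2}z^{\varka(m)/2}$ in the odd case; both work), and your computation of $\ol f\,\ol x^{\,m}$ matches exactly.

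The one place where your proposal diverges from the paper is in the treatment of $h$: you anticipate an obstruction when $12\mid n$, to be cancelled by the hexagon relation, and suggest solving a small congruence system. In the paper's proof no such obstruction appears and the hexagon relation is not invoked beyond the formula for $\varka(m)$: the proof simply writes down explicit monomials $h=x^a y^b$ or $h=x^a z^b$ in each of the four cases $m\equiv 0,2,3,5\pmod 6$ (these are your ``parity of $m$'' $\times$ ``$m\bmod 3$'' cases), for instance $h=x^{2k+m}y^{m}$ when $m\equiv 0$ and $h=x^{-2k}z^{-m}$ when $m\equiv 5$, and verifies each by a short direct computation in $G_n$ valid uniformly in $n$. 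So your plan is correct, but the execution is more elementary than you expect: rather than solving a system and tracking a mod-$3$/parity obstruction, you can guess the four $h$'s directly and check them.
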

\begin{proof} Let $f : = x^{2k}y^{-2k} z^{\varka(m)}$. We set 
$$
g : = 
\begin{cases}
x^{2k}z^{\varka(m)/2}& \txt{if}~~ 4 \mid \varka(m),    \\
y^{2k+2} z^{\varka(m)/2} & \txt{if}~~ 4 \nmid \varka(m). 
\end{cases} 
$$
A direct calculation shows that 
$$
\psi_n \big(\te(g)^{-1} g \big) = (r^{2k}, r^{-2k}, r^{\varka(m)}). 
$$
Thus the first equation in \eqref{LS-conditions} is satisfied.  

Since $n$ is divisible by $3$, so is $K_{\ord}^{(n)}$. 
Since $2 m + 1$ is coprime to $K_{\ord}^{(n)}$, $m$ cannot 
be congruent to $1$ modulo $3$. Hence $m$ may have the following
4 remainders of division by\footnote{Recall that $K_{\ord}^{(n)}$ is always even.} 6: 0, 2, 3, 5. 

If $f : = x^{2k}y^{-2k} z^{\varka(m)}$, then we set 
$$
h : = 
\begin{cases}
x^{2k+m} y^{m}   & \txt{if} ~~  m \equiv 0 \mod 6, \\
x^{2k-1} z^{-m-2}   & \txt{if} ~~  m \equiv 2 \mod 6, \\
x^{-2k-m+1} y^{-m}  & \txt{if} ~~  m \equiv 3 \mod 6, \\
x^{-2k} z^{-m}  & \txt{if} ~~  m \equiv 5 \mod 6.
\end{cases} 
$$

%
%

Let us consider the case when $m \equiv 0 \mod 6$. In this case $m$ is even 
and $m \equiv 0 \mod 3$. Hence we need to check that 
\begin{equation}
\label{residue-0-mod-6}
\psi_n(x^{2k}y^{-2k} z^{\varka(m)} x^m) = \psi_n \big(\tau(h)^{-1} \, h \big),
\end{equation}
where $h = x^{2k+m} y^{m}$. 

Since $m$ is even, $\varka(m) = -m$, we rewrite the left hand side of \eqref{residue-0-mod-6} as follows: 
\begin{align*}
\psi_n(x^{2k}y^{-2k} z^{\varka(m)} x^m) & = (r^{2k}, r^{-2k}, r^{-m})  (r, s ,s)^m =  \\
(r^{2k}, r^{-2k}, r^{-m})  (r^m, 1 , 1) & = (r^{2k + m}, r^{-2k}, r^{-m}).
\end{align*}

As for the right hand side of \eqref{residue-0-mod-6}, we have
$$
\tau(h)^{-1}  = (y^{2k+m} z^{m})^{-1} = z^{-m} y^{-2k-m}\,,
$$
$$
\psi_n \big( \tau(h)^{-1} \big)  = \psi_n(z^{-m}) \psi_n(y^{-2k-m}) = 
(1,1, r^{-m}) (1, r^{-2k-m} ,1)  = (1, r^{-2k-m}, r^{-m})
$$
and 
\begin{align*}
\psi_n \big(\tau(h)^{-1} \, h \big)  =\psi_n \big(\tau(h)^{-1} \big) \, \psi_n( x^{2k+m} y^{m}) & =\\
(1, r^{-2k-m}, r^{-m}) \, (r^{2k+m}, r^m, 1) & =(r^{2k+m}, r^{-2k}, r^{-m}).
\end{align*}
Thus \eqref{residue-0-mod-6} is satisfied.  

Let us consider the case when $m \equiv 5 \mod 6$. In this case $m$ is odd and 
$m \equiv -1 \mod 3$. Hence we need to check that 
\begin{equation}
\label{residue-5-mod-6}
\psi_n(x^{2k}y^{-2k} z^{\varka(m)} x^m) = \psi_n \big(\tau(h)^{-1}\,  xy\, h \big),
\end{equation}
where $h = x^{-2k} z^{-m}$. 

Since $m$ is odd, $\varka(m) = m+1$, we rewrite the left hand side of \eqref{residue-5-mod-6} 
as follows:
\begin{align*}
\psi_n(x^{2k}y^{-2k} z^{\varka(m)} x^m) & = (r^{2k}, r^{-2k}, r^{m + 1}) (r, s, s)^m = \\
(r^{2k}, r^{-2k}, r^{m+1}) (r^m, s, s) & = (r^{2k + m}, r^{-2k} s, r^{m+1} s). 
\end{align*}

As for the right hand side of \eqref{residue-5-mod-6}, we have: 
\begin{align*}
\tau(h)^{-1} & =  (y^{-2k} x^{-m})^{-1} = x^m y^{2k}, \\
\psi_n(xy) & = (r, s, s) (rs, r, rs) = (r^2 s, r^{-1} s, r^{-1}), 
\end{align*}
and 
\begin{align*}
\psi_n \big(\tau(h)^{-1}\,  xy\, h \big)  =  \psi_n( x^m y^{2k})\, \psi_n(xy)\, \psi_n(x^{-2k} z^{-m}) & = \\
(r^m, s, s)(1, r^{2k}, 1) \, (r^2 s, r^{-1} s, r^{-1}) \, (r^{-2k}, 1, 1) \, (r^2 s, r^{-1} s, r^{-m}) & = 
(r^{m+2k}, r^{-2k} s, r^{m+1} s).
\end{align*}
Thus \eqref{residue-5-mod-6} is satisfied. 

We leave the verification of the remaining two cases (when $m \equiv 2 \mod 6$ and $m \equiv 3 \mod 6$) 
to the reader.  
\end{proof}
 
The main result of this paper is the following theorem:
\begin{thm}
\label{thm:main}
Consider an integer $n = 2^\alpha n_0 \ge 3$ with $n_0$ being odd
and let $\GT_{arith}(\K^{(n)})$ be the subgroup of arithmetical $\GT$-shadows 
in $\GT(\K^{(n)})$ (see \eqref{GT-arith-N} in Subsection \ref{sec:Ih-N}). Then
\begin{equation}
\label{lower-bound}
|\GT_{arith}(\K^{(n)})| ~ \ge ~ 
\begin{cases}
2 \phi(n_0)  & \textrm{if} ~~ \al =0 \textrm{ or } \al = 1, \\
2^{2\alpha-2} \phi(n_0) & \textrm{if} ~~ \al \ge 2.
\end{cases}
\end{equation}
In particular for $n_0 = 1$, the group homomorphism 
$$
\Ih_{ \K^{(2^{\alpha})} }: G_{\QQ} \to \GT(\K^{(2^{\alpha})}), \qquad \al \in \ZZ_{\ge 2}
$$
is surjective. 
\end{thm}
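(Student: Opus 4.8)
The plan is to study the subgroup $\GT_{arith}(\K^{(n)}) = \Ih_{\K^{(n)}}(G_{\QQ})$ of the finite group $\GT(\K^{(n)})$ (now explicitly described by Proposition \ref{prop:pair-multiplication} and Theorem \ref{thm:GT-Kn-group}) using only two external facts: the surjectivity of the cyclotomic character $\chi$, and the fact (Remark \ref{rem:complex-conjugation}) that the image of complex conjugation under $\Ih_{\K^{(n)}}$ is the pair $(-1, 1)$. First I would feed the commutative diagram \eqref{Ih-N-chi-vir-N} into this: since $\chi_{vir, \K^{(n)}} \circ \Ih_{\K^{(n)}} = \hcP_{K^{(n)}_{\ord}} \circ \chi$ and both $\chi$ and $\hcP_{K^{(n)}_{\ord}}$ are surjective, the restriction of $\chi_{vir, \K^{(n)}}$ to $\GT_{arith}(\K^{(n)})$ is surjective onto $(\ZZ/K^{(n)}_{\ord}\ZZ)^{\times}$. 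Because $K^{(n)}_{\ord} = \lcm(n, 2)$, this already yields $|\GT_{arith}(\K^{(n)})| \ge \phi(\lcm(n,2))$, i.e.\ $\phi(n_0)$ when $\al \le 1$ and $2^{\al-1}\phi(n_0)$ when $\al \ge 2$.

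The remaining task is to gain the extra factor (a factor $2$ when $\al \le 1$, a factor $2^{\al-1}$ when $\al \ge 2$) by exhibiting enough arithmetical $\GT$-shadows inside $\ker(\chi_{vir, \K^{(n)}})$. Two mechanisms feed into this. The first is direct realization: for $g \in G_{\QQ}$, $\Ih_{\K^{(n)}}(g) = \big( (\chi(g)-1)/2 + K^{(n)}_{\ord}\ZZ,\ f_g\, \K^{(n)}_{\F_2} \big)$, and by Theorem \ref{thm:GT-Kn-set} the $\F_2$-component must be of the form $(r^{2k}, r^{-2k}, r^{\varka(m)})$ with the residue of $k$ modulo $2$ determined by $m$ (when $4 \mid n$); whenever $\ord(r^2)$ in $D_n$ is small enough that this forces $k$ completely — in particular for $n = 4$ — the shadow $\Ih_{\K^{(n)}}(g)$ depends only on $\chi(g)$, so surjectivity of $\chi$ forces a full set of kernel shadows into $\GT_{arith}(\K^{(n)})$. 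The second mechanism is group-theoretic closure: $\GT_{arith}(\K^{(n)})$ is a \emph{subgroup}, so I would combine the shadows produced by $\chi$-surjectivity with $(-1,1)$ and exploit the entanglement of the translation component and the unit component inside $\ZZ/n_1\ZZ \rtimes (\ZZ/2n\ZZ)^{\times}$ — encoded by the congruence $k \equiv \nu(u) \bmod 2$ of \eqref{H-n} together with the non-trivial action in the semidirect product. Concretely, I would prove the combinatorial lemma that a subgroup of $\ti{H}_{\al}$ which surjects onto $(\ZZ/2^{\al}\ZZ)^{\times}$ under $\chi_{vir}$ and contains $(-1,1)$ has no complement to $\ker(\chi_{vir})$ and no proper index-$2$ overgroup satisfying both constraints, hence equals $\ti{H}_{\al}$. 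Running the $2$-part argument against $\ti{H}_{\al}$ and the odd-part argument against $\Aff(\ZZ/n_0\ZZ)$ — whose rotation subgroup $(\ZZ/n_0\ZZ)^{\times}$ is captured by $\chi_{vir}$ but whose translation subgroup $\ZZ/n_0\ZZ$ is not — and keeping track of the splitting $\GT(\K^{(n)}) \cong \Aff(\ZZ/n_0\ZZ) \times \ti{H}_{\al}$ of Theorem \ref{thm:GT-Kn-group} gives the bound \eqref{lower-bound}.

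For the final assertion I would simply take $n = 2^{\al}$, so that $n_0 = 1$, $\phi(n_0) = 1$, and the lower bound reads $2^{2\al - 2}$. By Theorem \ref{thm:GT-Kn-group} this equals $|\GT(\K^{(2^{\al})})| = |\ti{H}_{\al}|$, so $\GT_{arith}(\K^{(2^{\al})}) = \GT(\K^{(2^{\al})})$; equivalently, $\Ih_{\K^{(2^{\al})}} : G_{\QQ} \to \GT(\K^{(2^{\al})})$ is surjective.

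I expect the heart of the difficulty to lie in the second paragraph: pinning down exactly how large a portion of $\ker(\chi_{vir, \K^{(n)}})$ is forced into $\GT_{arith}(\K^{(n)})$. Surjectivity of $\chi$ controls only the $\ZZ/K^{(n)}_{\ord}\ZZ$-component of $\Ih_{\K^{(n)}}(g)$ and says nothing directly about $f_g\, \K^{(n)}_{\F_2}$ (equivalently, about the parameter $k$); one is forced to argue that the \emph{group} generated by all such shadows together with $(-1,1)$ must be large purely from the internal arithmetic of $\GT(\K^{(n)})$. This works for the $2$-part because there the translations are intertwined with the units, but for the odd part the translation subgroup $\ZZ/n_0\ZZ \subset \Aff(\ZZ/n_0\ZZ)$ escapes control — which is exactly why surjectivity of $\Ih_{\K^{(n)}}$ is obtained only for $n$ a power of $2$ and why Conjecture \ref{conj:GT-shadows-are-arithmetical} stays open in general.
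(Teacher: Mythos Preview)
Your strategy is the same as the paper's: use the surjectivity of $\chi$ to force the unit component of $\GT_{arith}(\K^{(n)})$ to be large, use the image $(\bar 0,-\bar 1)$ of complex conjugation, and then conjugate to squeeze out elements of the translational kernel. For $\al\ge 3$ the ``combinatorial lemma'' you state is true, and its proof \emph{is} precisely the paper's conjugation argument: once you pick $(\bar k,\bar u)\in S\le\ti H_\al$ with $u\equiv 5\pmod{2^\al}$, the constraint $k\equiv b\pmod 2$ in $\ti H_\al$ forces $k$ odd, $(\bar 0,-\bar 1)$-conjugation gives $(-\bar k,\bar u)$, and their quotient produces a generator of the $2$-divisible translations; together with $\langle -1,5^b\rangle=(\ZZ/2^{\al+1}\ZZ)^\times$ this yields $S=\ti H_\al$.

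The gap is in the low-$\al$ cases and the passage to general $n$. The paper does one thing you do not: before any combinatorics it \emph{passes through $\K^{(2n)}$}. Since $K^{(2n)}_{\ord}=2n$, the diagram \eqref{chi-vir-chi-vir-N} at level $2n$ together with Remark~\ref{rem:cR-H-N-and-Ihara} shows that the map $\chi_n\colon(\bar k,\bar u)\mapsto\bar u$ is surjective from $\GT_{arith}(\K^{(n)})$ onto $(\ZZ/2n\ZZ)^\times$, a full factor of $2$ better than what $\chi_{vir,\K^{(n)}}$ alone gives. This single trick makes $\al=1$ and $\al=2$ immediate (the bound equals $|(\ZZ/2n\ZZ)^\times|$), and for $\al\ge 3$ it keeps the odd and even parts together so that the conjugation argument runs directly inside $\ZZ/n_1\ZZ\rtimes(\ZZ/2n\ZZ)^\times$ without any Goursat-type bookkeeping. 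Without it your combinatorial lemma actually \emph{fails} at $\al=2$: the subgroup $\{(\bar 0,\bar 1),(\bar 0,-\bar 1)\}\le\ti H_2$ surjects onto $(\ZZ/4\ZZ)^\times$ and contains $(\bar 0,-\bar 1)$ but is proper. Your ``direct realization'' patches the single case $n=4$, but for $n=4n_0$ with $n_0>1$ (and for $\al\le 1$) you would still need the $\K^{(2n)}$ reduction step or an equivalent argument, and your proposed factorization through $\Aff(\ZZ/n_0\ZZ)\times\ti H_\al$ does not by itself bound $|\GT_{arith}(\K^{(n)})|$ from below by the product of the sizes of the two projections.
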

\begin{proof} Due to Proposition \ref{prop:K-2n-n}, $\K^{(n_0)} =  \K^{(2 n_0)}$.
Hence, $\GT(\K^{(n_0)}) = \GT(\K^{(2 n_0)})$ for every odd integer $n_0 \ge 3$. 
For this reason, we focus only on the case when $n$ is even.

If $n$ is even, then, due to Proposition \ref{prop:pair-multiplication}, 
$\GT(\K^{(n)})$ is isomorphic a subgroup of 
\begin{equation}
\label{ZZ-by-n1-rtimes-ZZ-by-2n}
\ZZ/n_1\ZZ \rtimes \big(\ZZ/2 n \ZZ \big)^{\times}\,,
\end{equation}
where $n_1 : = 2^{\al - 1} n_0$.

As above, we denote elements of \eqref{ZZ-by-n1-rtimes-ZZ-by-2n} by pairs 
$(\ol{k}, \ol{u})$ with the first entry being a residue class modulo $2^{\al - 1} n_0$
and the second entry being a residue class modulo $2 n = 2^{\al + 1} n_0$. 

Since $(\ol{0}, \ol{1})$ is the identity element of $\GT(\K^{(n)})$ and $(\ol{0}, -\ol{1})$ is the image 
of the complex conjugation, these elements are arithmetical (see Remark \ref{rem:complex-conjugation}).  

Let us denote by $\chi_{n}$ the group homomorphism 
\begin{equation}
\label{chi-n}
\chi_{n}(\ol{k}, \ol{u}) : = \ol{u} \, : \,   \GT_{arith}(\K^{(n)}) \to  
\big(\ZZ/2 n \ZZ \big)^{\times}\,.
\end{equation}

Due to Remark \ref{rem:cR-H-N-and-Ihara}, the restriction 
$\cR_{\K^{(2 n)}, \K^{(n)}}\big|_{ \GT_{arith}(\K^{(2 n)})}$
of the reduction homomorphism $\cR_{\K^{(2 n)}, \K^{(n)}}$
is a homomorphism
from $\GT_{arith}(\K^{(2 n)})$ to $\GT_{arith}(\K^{(n)})$. 
Moreover, it is easy to see that the diagram 
$$
\begin{tikzpicture}
\matrix (m) [matrix of math nodes, row sep=1.5em, column sep=1.5em]
{ \GT_{arith}(\K^{(2 n)})  &  ~ &  \GT_{arith}(\K^{(n)}) \\
~ &  ~\big(\ZZ/2 n \ZZ \big)^{\times} &~ \\};
\path[->, font=\scriptsize]
(m-1-1) edge node[above] {$\cR_{\K^{(2 n)}, \K^{(n)} }$} (m-1-3)
edge node[left] {$\chi_{vir, \K^{(2n)}}~$} (m-2-2)  
(m-1-3) edge node[right] {$~~\chi_{n}$} (m-2-2);
\end{tikzpicture}
$$
commutes. Combining this observation with the surjectivity of  
the homomorphism $\chi_{vir,\, \K^{(2 n)}}$ 
(see Subsection \ref{sec:Ihara-Ih-N}), we conclude that the homomorphism 
$\chi_{n}$ is also surjective.
Hence, for every $\ol{u} \in (\ZZ/2 n \ZZ)^{\times}$, there is 
at least one $\ol{k} \in \ZZ/ n_1 \ZZ$ such that $(\ol{k}, \ol{u}) \in \GT_{arith}(\K^{(n)})$.

For $\ol{u} \in (\ZZ/2n \ZZ)^{\times}$, we set 
$$
T_{\ol{u}} : = \chi_{n}^{-1} (\ol{u}) \subset \GT_{arith}(\K^{(n)}).
$$
Since $T_{\ol{1}}$ is the kernel of the homomorphism 
$\chi_{n} : \GT_{arith}(\K^{(n)}) \to  (\ZZ / 2n \ZZ)^{\times}$ 
and each $T_{\ol{u}}$ is a coset of the subgroup $T_{\ol{1}} \unlhd \GT_{arith}(\K^{(n)})$, 
we conclude that the size $|T_{\ol{u}}|$ of $T_{\ol{u}}$
does not depend on the unit $\ol{u}$. 

Let $\al = 1$, i.e. $n = 2 n_0$ with $n_0$ being odd $\ge 3$. Since  
$$
\big| (\ZZ / 2 n \ZZ)^{\times} \big| = \phi(2 n) = \phi(4 n_0) =  \phi(4) \phi(n_0) = 2 \phi(n_0),
$$
the surjectivity of $\chi_n$ (see \eqref{chi-n}) implies that 
$$
|\GT_{arith}(\K^{(n)})| \ge  2 \phi(n_0).
$$

The case when $\al = 2$ is also straightforward. Indeed, in this case, 
$2 n = 8 n_0$ with $n_0$ being odd $\ge 1$. Since 
$$
\big| (\ZZ / 2n \ZZ)^{\times} \big| = \phi(8 n_0) = \phi(8) \phi(n_0) = 4 \phi(n_0) = 2^{2\cdot 2 - 2} \phi(n_0), 
$$  
the surjectivity of $\chi_n$ (see \eqref{chi-n}) implies that 
the inequality in \eqref{lower-bound} is satisfied for $\al = 2$. 

In the remainder of the proof, we assume that $\al \ge 3$. Hence $\al - 1 \ge 2$. 

Since 
\begin{equation}
\label{order-of-ZZ-mod-2n}
\big| (\ZZ / 2 n \ZZ)^{\times} \big| = \big| (\ZZ / 2^{\al + 1} n_0 \ZZ)^{\times} \big|   = 
\phi(2^{\al + 1} n_0) = 2^{\al} \phi(n_0),
\end{equation}
our goal is to show that the size of each coset $T_{\ol{u}}$ is at least $2^{\alpha-2}$.

First, let us prove that the coset $T_{\ol{5}}$ has at least two elements. 

Let $(\ol{k}, \ol{5}) \in T_{\ol{5}}$. Due to equation \eqref{H-n}, an integer 
$k$ that represents $\ol{k}$ is odd. 

Conjugating $(\ol{k}, \ol{5})$ with $(\ol{0}, -\ol{1}) \in  \GT_{arith}(\K^{(n)})$ we see that 
$(-\ol{k}, \ol{5})$ also belongs to $\GT_{arith}(\K^{(n)})$. If  $\ol{k} = - \ol{k}$, then 
$2 k = 2^{\al - 1} n_0 q$ for some integer $q$.  
Since $\al - 1 \ge 2$ and $k$ is odd, this equality contradicts to the fundamental theorem 
of arithmetic. Thus  $T_{\ol{5}}$ and hence each coset $T_{\ol{u}}$ has a least two elements. 

Since $T_{\ol{1}} = \ker(\chi_n)$ is a subgroup of the cyclic group
$$
\{(\ol{k}, \ol{1})~|~ \ol{k} \in \ZZ/n_1 \ZZ \} \le H_n  \le \ZZ/n_1 \ZZ \rtimes  (\ZZ / 2 n \ZZ)^{\times},
$$
$T_{\ol{1}}$ is also cyclic. 

Let  $(\, \ol{2^{\al_1} k_1}, \ol{1}\,)$ be a generator of $T_{\ol{1}}$. Here $k_1 = 0$
or $k_1$ is an odd (positive) integer. If $k_1 = 0$, then the kernel 
of $\chi_n : \GT_{arith}(\K^{(n)})  \to (\ZZ / 2 n \ZZ)^{\times}$ is trivial. This conclusion 
contradicts to the fact that each coset $T_{\ol{u}}$ has at least two elements.  
Thus $k_1$ is an odd positive integer. Also, note that $\al_1$ must be positive due 
to equation \eqref{H-n}. 

Without loss of generality, we may assume that the integer $2^{\al_1} k_1$ is a divisor of $2^{\al - 1} n_0$. 
Hence $k_1 \mid n_0$ and $1 \le \al_1 \le \al - 1$. 

We conclude that 
\begin{equation}
\label{order-of-T-1}
|T_{\ol{1}}| = 2^{\al - 1 - \al_1}\frac{n_0}{k_1}.
\end{equation}

Again, let $(\ol{k}, \ol{5}) \in T_{\ol{5}}$. Conjugating $(\ol{k}, \ol{5})$ by 
$(\ol{0}, -\ol{1}) \in  \GT_{arith}(\K^{(n)})$, we see that $(-\ol{k}, \ol{5})$ also 
belongs to $T_{\ol{5}}$. 

Since $T_{\ol{5}}$ is a coset of $T_{\ol{1}}$ in $\GT_{arith}(\K^{(n)})$ and 
$T_{\ol{1}}$ is generated by $(\, \ol{2^{\al_1} k_1}, \ol{1}\,)$, we conclude that 
$(-\ol{k}, \ol{5}) = (\, \ol{2^{\al_1} k_1}, \ol{1}\,)^t  \cdot (\ol{k}, \ol{5}) $ for an integer $t$.
Thus 
$$
-k  \equiv k +  2^{\al_1} k_1 t ~\mod~ 2^{\al-1} n_0
$$ 
or equivalently 
\begin{equation}
\label{key-contradiction}
2 k + 2^{\al_1} k_1 t = 2^{\al-1} n_0 q
\end{equation}
for an integer $q$.

Since $k$ is odd and $\al - 1 \ge 2$, $\al_1$ cannot be greater than $1$. 
Otherwise, equality \eqref{key-contradiction} would contradict the fundamental 
theorem of arithmetic.  

We proved that $\al_1 = 1$. Hence equation \eqref{order-of-T-1} implies that 
$$
|T_{\ol{u}}| =  2^{\al - 2}\, \frac{n_0}{k_1} \ge 2^{\al - 2}.
$$

Combining this observation with the calculation in \eqref{order-of-ZZ-mod-2n}, we
conclude that $|\GT_{arith}(\K^{(n)})| \ge 2^{2\alpha-2} \phi(n_0)$. Thus
inequality \eqref{lower-bound} is proved. 

Let us now prove the second statement of the theorem. 

Since $\al \ge 2$, Theorem \ref{thm:GT-Kn-group} implies that 
$\GT(\K^{(2^{\al})})$ is an index $2$ subgroup of 
$\ZZ/2^{\alpha-1}\ZZ \rtimes \big(\ZZ/2^{\alpha+1}\ZZ \big)^{\times}$. 
Therefore, since 
$$
\big| \ZZ/2^{\alpha-1}\ZZ \rtimes \big(\ZZ/2^{\alpha+1}\ZZ \big)^{\times} \big| = 
2^{\al - 1} \cdot \phi(2^{\al+1}) = 2^{\al - 1} \cdot 2^{\al} = 2^{2 \al - 1},
$$
the order of the group $\GT(\K^{(2^{\al})})$ is $2^{2 \al - 2}$. 
Thus inequality \eqref{lower-bound} implies that  
$$
\GT_{arith}(\K^{(2^{\al})}) = \GT( \K^{(2^{\al})} ).
$$

We proved that, for every $\al \in \ZZ_{\ge 2}$, the homomorphism 
$\Ih_{\K^{(2^{\al})}} : G_{\QQ} \to  \GT(\K^{(2^{\al})})$ is surjective. 
\end{proof}

\bigskip

Due to Theorem \ref{thm:main}, we have a new family of non-abelian
finite quotients of the original Grothendieck-Teichmueller group $\GTh$ 
introduced by V. Drinfeld in \cite[Section 4]{Drinfeld}. More precisely, 
\begin{cor}
\label{cor:quotient-of-GTh-G-QQ} 
For every $\al \in \ZZ_{\ge 2}$, the finite group $\GT(\K^{(2^{\alpha})})$ is naturally 
a quotient of $G_{\QQ}$ and a quotient of $\GTh$. If $\al \ge 3$, then the group 
$\GT(\K^{(2^{\alpha})})$ is non-abelian. 
\end{cor}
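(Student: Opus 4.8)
The plan is to read off the two ``quotient'' statements directly from Theorem \ref{thm:main}, and to obtain the non-abelian assertion from the explicit structure of $\GT(\K^{(2^\al)})$ provided by Theorem \ref{thm:GT-Kn-group}.

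For the claim that $\GT(\K^{(2^\al)})$ is a quotient of $G_\QQ$: by Theorem \ref{thm:GT-Kn-set} the object $\K^{(2^\al)}$ is isolated in $\GTSh$, so $\GT(\K^{(2^\al)}) = \GTSh(\K^{(2^\al)}, \K^{(2^\al)})$ is a finite group and $\Ih_{\K^{(2^\al)}} = \PR_{\K^{(2^\al)}} \circ \Ih : G_\QQ \to \GT(\K^{(2^\al)})$ is a group homomorphism; the last assertion of Theorem \ref{thm:main} says that it is surjective, which is exactly what is wanted. For the claim that $\GT(\K^{(2^\al)})$ is a quotient of $\GTh$, I would use that the Ihara embedding factors through the inclusion $\GTh \le \GTh_{gen}$, so that $\Ih_{\K^{(2^\al)}}$ itself factors as the composite $G_\QQ \xrightarrow{\Ih} \GTh \xrightarrow{\,\PR_{\K^{(2^\al)}}|_{\GTh}\,} \GT(\K^{(2^\al)})$ of group homomorphisms; since the composite is onto, the restriction $\PR_{\K^{(2^\al)}}|_{\GTh}$ is a surjective homomorphism, hence $\GT(\K^{(2^\al)})$ is a quotient of $\GTh$.

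For the non-abelian statement when $\al \ge 3$, I would invoke Theorem \ref{thm:GT-Kn-group} with $n = 2^\al$ (so $n_0 = 1$): since $\Aff(\ZZ/\ZZ)$ is the trivial group, it yields an isomorphism $\GT(\K^{(2^\al)}) \cong \ti H_\al$, the index $2$ subgroup of $\ZZ/2^{\al-1}\ZZ \rtimes (\ZZ/2^{\al+1}\ZZ)^\times$ described in \eqref{tilde-H-alpha}. It then suffices to exhibit two non-commuting elements of $\ti H_\al$: I would take $g := (\ol 0, -\ol 1)$ (here $a=1$, $b=0$, $k=0$, so $k \equiv b \mod 2$) and $h := (\ol 1, \ol 5)$ (here $a=0$, $b=1$, $k=1$), both of which lie in $\ti H_\al$, and compute, in the semidirect product whose multiplication is $(\ol{k_1}, \ol{u_1})(\ol{k_2}, \ol{u_2}) = (\ol{k_1} + \ol{u_1}\ol{k_2}, \ol{u_1}\ol{u_2})$ (matching the composition law of Proposition \ref{prop:pair-multiplication}), that $gh = (-\ol 1, -\ol 5)$ whereas $hg = (\ol 1, -\ol 5)$, using $\ol 5 \cdot \ol 0 = \ol 0$ in $\ZZ/2^{\al-1}\ZZ$. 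These coincide if and only if $\ol 1 = -\ol 1$ in $\ZZ/2^{\al-1}\ZZ$, i.e. iff $2^{\al-1} \mid 2$; for $\al \ge 3$ this fails, so $\GT(\K^{(2^\al)})$ is non-abelian.

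There is no genuine obstacle here: the corollary is a short consequence of results already established. The only step requiring care is the last computation, where one must keep the semidirect-product convention consistent with Proposition \ref{prop:pair-multiplication} and verify that the chosen $g$ and $h$ satisfy the congruence $k \equiv b \mod 2$ defining $\ti H_\al$. As a sanity check on the hypothesis $\al \ge 3$, note that for $\al = 2$ the group $(\ZZ/8\ZZ)^\times$ acts trivially on $\ZZ/2\ZZ$, so $\GT(\K^{(4)}) \cong \ti H_2$ is abelian, consistent with the corollary excluding that case.
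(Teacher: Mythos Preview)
Your proposal is correct and follows essentially the same route as the paper's proof: both deduce the quotient statements from the surjectivity of $\Ih_{\K^{(2^\al)}}$ in Theorem \ref{thm:main} together with the factorization of $\Ih$ through $\GTh \hookrightarrow \GTh_{gen}$, and both establish non-abelianness by exhibiting the same pair $(\ol 0,-\ol 1),\,(\ol 1,\ol 5)\in \ti H_\al$ and using $\ol 1 \neq -\ol 1$ in $\ZZ/2^{\al-1}\ZZ$ for $\al\ge 3$. Your version is slightly more explicit (you check membership in $\ti H_\al$, spell out the semidirect-product multiplication, and add the $\al=2$ sanity check), but there is no substantive difference.
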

\begin{proof} The second statement of Theorem \ref{thm:main} implies that the finite 
group $\GT(\K^{(2^{\alpha})})$ is naturally a quotient of $G_{\QQ}$.

Since  $\Ih_{ \K^{(2^{\alpha})} }$ is the composition 
of the homomorphisms 
$$
G_{\QQ} \overset{\Ih}{\tto} \GTh \tto \GTh_{gen} \tto \GT(\K^{(2^{\alpha})})
$$
and the homomorphism $\Ih_{ \K^{(2^{\alpha})} }$ is surjective, we conclude that 
the homomorphism $\GTh \to  \GT(\K^{(2^{\alpha})})$ is also surjective.
Thus the finite group $\GT(\K^{(2^{\alpha})})$ is naturally a quotient of $\GTh$.

Assume that $\al \ge 3$.  Since $\GT(\K^{(2^{\alpha})})$ is isomorphic to the 
subgroup $\ti{H}_{\al}$ (see \eqref{tilde-H-alpha}) of 
$\ZZ/2^{\alpha-1}\ZZ \rtimes \big(\ZZ/2^{\alpha+1}\ZZ \big)^{\times}$, we need to show 
that $\ti{H}_{\al}$ is non-abelian. 

The pairs $(\ol{0}, -\ol{1})$ and $(\ol{1}, \ol{5})$ belong to $\ti{H}_{\al}$ and they 
do not commute in $\ti{H}_{\al}$ since $\ol{1} \neq -\ol{1}$ in $\ZZ/2^{\alpha-1}\ZZ$.  
\end{proof}
\begin{remark}  
\label{rem:non-abelian}
Let $\N \in \NFI_{\PB_3}(\B_3)$. If $\PB_3/\N$ is abelian (or, equivalently, $\F_2/\N_{\F_2}$ is abelian), 
then 
$$
\GT(\N) := \{(m, 1_{\F_2}) ~|~ 0 \le m \le N_{\ord}-1, ~~ \gcd(2m+1, N_{\ord}) =1 \}
$$ 
and every $\GT$-shadow with the target $\N$ is arithmetical due to the surjectivity 
of the cyclotomic character. Thanks to Theorem \ref{thm:main} and its Corollary 
\ref{cor:quotient-of-GTh-G-QQ}, we have a family of isolated objects $\N$ of the groupoid $\GTSh$ 
such that $\PB_3/\N$ is non-abelian, the group $\GT(\N)$ is non-abelian and the version of the Ihara 
homomorphism $\Ih_{\N} : G_{\QQ} \to \GT(\N)$ is surjective.
\end{remark}
\begin{remark}  
\label{rem:open-questions}
In paper \cite{LochakSchneps-Open} (see Question 1.6), P. Lochak and L. Schneps 
asked if anything can be said about finite non-abelian quotients of $\GTh$. The statement 
about $\GTh$ in Corollary \ref{cor:quotient-of-GTh-G-QQ} is a promising step 
in addressing this question. This statement also sheds light on \cite[Question 1.7]{LochakSchneps-Open} 
and it would be very interesting to continue tackling both Question 1.6 and Question 1.7 
from  \cite{LochakSchneps-Open} using $\GT$-shadows. 
We should also mention that Corollary \ref{cor:quotient-of-GTh-G-QQ}
resolves the natural version of \cite[Question 4.7]{GTShadows} for the gentle 
version $\GTh_{gen}$ of the Grothendieck-Teichmueller group  $\GTh$.
\end{remark}

\subsection{A non-abelian infinite profinite quotient of $\GTh$}
\label{sec:infinite-quotient}

Recall \cite[Section 5]{GTgentle} that the assignment 
$$
\N \in \NFI_{\PB_3}^{isolated}(\B_3) \mapsto \GT(\N)
$$ 
upgrades to the functor $\ML$ (called the Main Line functor) from the poset 
$\NFI_{\PB_3}^{isolated}(\B_3)$ of isolated objects of the groupoid $\GTSh$ to the category 
of finite groups (see Remark \ref{rem:ML}).
Moreover, due to \cite[Theorem 5.2]{GTgentle}, the natural map 
from $\GTh_{gen}$ to $\lim(\ML)$ is an isomorphism of topological groups. 

In this section, we will show that the finite groups $\GT(\K^{(2^{\al})})$ for $\al \in \ZZ_{\ge 2}$
assemble into an infinite profinite group and the natural homomorphism 
from $G_{\QQ}$ to this group is surjective. 

For this purpose, we denote by $\Dih_2$ the following subposet of $\NFI_{\PB_3}^{isolated}(\B_3)$: 
\begin{equation}
\label{Dih-2}
\Dih_2 := \{\K^{(2^{\al})}~|~ \al \in \ZZ_{\ge 2} \} \subset \NFI_{\PB_3}^{isolated}(\B_3).
\end{equation}
 
Since each element $\K \in \Dih_2$ is an isolated object of the groupoid $\GTSh$,  
the formula
\begin{equation}
\label{PR-Dih-2}
\PR^{\Dih_2}(\hat{m}, \hat{f})(\K) :=  
\big( \hcP_{K_{\ord}}(\hat{m}), \hcP_{\K_{\F_2}}(\hat{f}) \big), \qquad \K \in \Dih_2
\end{equation}
defines a group homomorphism $\PR^{\Dih_2}$ from $\GTh_{gen}$ to the limit 
$$
\lim\big( \ML \big|_{\Dih_2} \big)
$$
of the functor $\ML\big|_{\Dih_2}$. Note that the poset $\Dih_2$ is totally ordered.
In fact, due to Proposition \ref{prop:structure-of-Dih}, the assignment 
$\al \mapsto \K^{(2^{\al})}$ defines an isomorphism of posets $\ZZ_{\ge 2} \iso \Dih_2$, 
where the set $\ZZ_{\ge 2}$ is considered with the opposite order.  

Since the $\cR_{\K_1, \K_2} : \GT(\K_1) \to \GT(\K_2)$ is surjective for every 
$\K_1, \K_2 \in \Dih_2$ with $\K_1 \le \K_2$ and each $\GT(\K)$ is a finite group 
considered with the discrete topology, the argument given in the proof of 
\cite[Proposition 1.1.4]{RZ-profinite} implies that the natural group homomorphism 
\begin{equation}
\label{from-lim-ML-Dih2-to-GT-K}
\lim\big( \ML \big|_{\Dih_2} \big) \to \GT(\K)
\end{equation} 
is surjective for every $\K \in \Dih_2$. 

Combining this observation with $\displaystyle \lim_{\al \to \infty} \, |\GT(\K^{(2^{\al})}) | = \infty$, we conclude 
that $\lim\big(  \ML\big|_{\Dih_2} \big)$ is an {\it infinite} profinite group. The surjectivity of 
the homomorphism in \eqref{from-lim-ML-Dih2-to-GT-K} and the second statement of Corollary \ref{cor:quotient-of-GTh-G-QQ} 
also imply that the group $\lim\big(  \ML\big|_{\Dih_2} \big)$ is non-abelian. 

Composing the homomorphism $\PR^{\Dih_2}$ with the Ihara embedding $\Ih : G_{\QQ} \into \GTh_{gen}$, 
we get the group homomorphism 
\begin{equation}
\label{Ih-Dih-2}
\Ih^{\Dih_2} : G_{\QQ} \to \lim\big( \ML \big|_{\Dih_2} \big).
\end{equation}

Using Theorem \ref{thm:main}, we can prove that 
\begin{thm}  
\label{thm:infinite-quotient}
The group homomorphism in \eqref{Ih-Dih-2} is surjective.
Hence the infinite profinite group $\lim\big( \ML\big|_{\Dih_2} \big)$ is a quotient of $G_{\QQ}$
and a quotient of the original Grothendieck-Teichmueller group $\GTh$. 
\end{thm}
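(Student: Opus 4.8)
The plan is to reduce the statement to Theorem \ref{thm:main} by a soft compactness argument of exactly the kind already used for the surjectivity of the homomorphism in \eqref{from-lim-ML-Dih2-to-GT-K}. First I would record that $\Dih_2$ is a chain: by Proposition \ref{prop:structure-of-Dih}, for $\al, \al' \ge 2$ one has $\K^{(2^{\al'})} \subseteq \K^{(2^{\al})}$ if and only if $\al \le \al'$. Hence $\Dih_2$ is totally ordered (in particular a directed poset), and $\lim\big( \ML \big|_{\Dih_2} \big)$ is the honest inverse limit of the tower of finite groups $\GT(\K^{(2^{\al})})$, $\al \ge 2$, with transition maps the reduction homomorphisms $\cR_{\K^{(2^{\al+1})},\K^{(2^{\al})}} : \GT(\K^{(2^{\al+1})}) \to \GT(\K^{(2^{\al})})$ (each surjective by Theorem \ref{thm:GTKn-redmap-strong}). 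Write $p_{\al} : \lim\big( \ML \big|_{\Dih_2} \big) \to \GT(\K^{(2^{\al})})$ for the canonical projections.

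Next I would unwind the definitions to check that $p_{\al}\circ \Ih^{\Dih_2} = \Ih_{\K^{(2^{\al})}}$ for every $\al \ge 2$. Comparing \eqref{PR-Dih-2} with \eqref{PR-N} shows that the $\K$-component of $\PR^{\Dih_2}$ is $\PR_{\K}$, i.e.\ $p_{\al}\circ \PR^{\Dih_2} = \PR_{\K^{(2^{\al})}}$; composing with the Ihara embedding and using \eqref{Ih-N} yields $p_{\al}\circ \Ih^{\Dih_2} = \PR_{\K^{(2^{\al})}}\circ \Ih = \Ih_{\K^{(2^{\al})}}$. By the final assertion of Theorem \ref{thm:main}, $\Ih_{\K^{(2^{\al})}}$ is surjective for every $\al \ge 2$, so the image $C := \Ih^{\Dih_2}(G_{\QQ})$ maps onto each $\GT(\K^{(2^{\al})})$ under $p_{\al}$.

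It then remains to invoke the standard fact that a closed subgroup of an inverse limit of finite discrete groups over a directed poset that maps onto every term of the system must be the whole limit. Since $G_{\QQ}$ is profinite and $\Ih^{\Dih_2}$ is continuous, $C$ is compact, hence a closed subgroup of $\lim\big( \ML \big|_{\Dih_2} \big)$. Given $g = (g_{\al})_{\al \ge 2} \in \lim\big( \ML \big|_{\Dih_2} \big)$, the sets $C_{\al} := \{c \in C ~|~ p_{\al}(c) = g_{\al}\}$ are nonempty (by the previous paragraph), closed, and nested, $C_{2}\supseteq C_{3}\supseteq\cdots$, because $p_{\al}(c) = \cR_{\K^{(2^{\al+1})},\K^{(2^{\al})}}\big(p_{\al+1}(c)\big)$ and the $g_{\al}$ are compatible; compactness of $C$ gives $\bigcap_{\al\ge 2} C_{\al}\neq\emptyset$, i.e.\ $g \in C$. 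This is precisely the argument of \cite[Proposition 1.1.4]{RZ-profinite} cited earlier in this section. Hence $\Ih^{\Dih_2}$ is surjective. Finally, since $\Ih^{\Dih_2} = \PR^{\Dih_2}\circ \Ih$ and $\Ih$ factors through $\GTh \subseteq \GTh_{gen}$, the surjectivity of $\Ih^{\Dih_2}$ forces the composite $\GTh \into \GTh_{gen} \xrightarrow{\PR^{\Dih_2}} \lim\big( \ML \big|_{\Dih_2} \big)$ to be surjective as well, so $\lim\big( \ML \big|_{\Dih_2} \big)$ is a quotient of both $G_{\QQ}$ and $\GTh$, exactly as in Corollary \ref{cor:quotient-of-GTh-G-QQ}.

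The only genuinely nontrivial input is Theorem \ref{thm:main}; everything else is formal. The point that needs care is the identity $p_{\al}\circ \Ih^{\Dih_2} = \Ih_{\K^{(2^{\al})}}$, since this is what makes the pointwise-surjectivity hypothesis of the compactness lemma applicable, together with the (routine) verification that $\Dih_2$ is a linearly ordered subposet so that $\lim\big( \ML \big|_{\Dih_2} \big)$ is a genuine inverse limit over a tower.
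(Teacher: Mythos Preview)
Your proposal is correct and follows essentially the same approach as the paper: verify that the projections $p_{\al}\circ \Ih^{\Dih_2}$ coincide with the maps $\Ih_{\K^{(2^{\al})}}$, invoke Theorem \ref{thm:main} for their surjectivity, and then pass to the limit via a compactness argument; the paper cites \cite[Corollary 1.1.6]{RZ-profinite} directly, whereas you spell out the nested-fibers argument by hand. The conclusion about $\GTh$ is obtained identically.
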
  
\begin{proof} Let $\K_1, \K_2 \in \Dih_2$ and $\K_1 \le \K_2$.
Due to Remark \ref{rem:cR-H-N-and-Ihara}, the diagram 
$$
\begin{tikzpicture}
\matrix (m) [matrix of math nodes, row sep=1.5em, column sep=1.5em]
{~  & G_{\QQ} & ~ \\
\GT( \K_1 ) &  ~ & \GT(\K_2) \\};
\path[->, font=\scriptsize]
(m-1-2) edge node[above] {$\Ih_{\K_1}~~~~~$} (m-2-1)
edge node[above] {$~~~~~\Ih_{\K_2}$} (m-2-3)  
(m-2-1) edge node[above] {$\cR_{\K_1, \K_2}$} (m-2-3);
\end{tikzpicture}
$$
commutes. 

Thus the family of group homomorphisms $\big( \Ih_{\K} \big)_{\K \in \Dih_2}$
is a cone for the functor $\ML\big|_{\Dih_2}$. It is easy to see that the 
homomorphism $\Ih^{\Dih_2}$ from $G_{\QQ}$ to $\lim\big( \ML \big|_{\Dih_2} \big)$ 
comes from the universal property of limits. 

Due to Theorem \ref{thm:main}, each homomorphism in the family 
 $\big( \Ih_{\K} \big)_{\K \in \Dih_2}$ is surjective. Hence \cite[Corollary 1.1.6]{RZ-profinite} implies 
that the group homomorphism $\Ih^{\Dih_2}$ is also surjective. Thus the infinite profinite group 
$$
\lim\big( \ML \big|_{\Dih_2} \big)
$$
is indeed isomorphic to a quotient of $G_{\QQ}$. 

Since the Ihara embedding $\Ih : G_{\QQ} \to \GTh_{gen}$ factors through the 
inclusion map $\GTh \to \GTh_{gen}$ and the homomorphism 
$\Ih^{\Dih_2} : G_{\QQ} \to \lim\big( \ML \big|_{\Dih_2} \big)$ is surjective, we conclude that 
the profinite group $\lim\big( \ML \big|_{\Dih_2} \big)$
is isomorphic to a quotient of the original Grothendieck-Teichmueller group $\GTh$. 
\end{proof}

\bigskip

For completeness, we would like to describe the profinite group 
$\displaystyle \lim\big( \ML \big|_{\Dih_2} \big)$ as a concrete subgroup of 
the group $\Aff(\ZZ_2) := \ZZ_2 \rtimes \ZZ_2^{\times}$. For this purpose, we compose the continuous group 
homomorphism 
\begin{equation}
\label{hcP-8-Aff-ZZ2}
\hcP_{8 \ZZ} : \Aff(\ZZ_2) \to \Aff(\ZZ/8\ZZ) = \ZZ/8\ZZ \rtimes (\ZZ/8\ZZ)^{\times}
\end{equation}
with the natural homomorphism 
\begin{equation}
\label{Aff-ZZ8-to-ZZ2-times-unitsZZ8}
\ZZ/8\ZZ \rtimes (\ZZ/8\ZZ)^{\times}  \to  \ZZ/2\ZZ \rtimes  (\ZZ/8\ZZ)^{\times}\,.
\end{equation}
Note that, since $\ZZ/2\ZZ$ does not have non-trivial automorphisms,  
$\ZZ/2\ZZ \rtimes  (\ZZ/8\ZZ)^{\times}$ is just the abelian group 
$\ZZ/2\ZZ \times  (\ZZ/8\ZZ)^{\times}$. 

Due to Theorem 2' in \cite[Chapter 4, Section 1]{Ireland-Rosen}, 
$(\ZZ/8\ZZ)^{\times} \cong \lan -\ol{1} \ran \times \lan \ol{5} \ran$. 
Moreover, $\lan -\ol{1} \ran \cong \ZZ/2\ZZ$ and 
$\lan \ol{5} \ran \cong \ZZ/2\ZZ$. 

Thus, composing $\hcP_{8 \ZZ}$ with the homomorphism in \eqref{Aff-ZZ8-to-ZZ2-times-unitsZZ8}, 
we get a homomorphism
$$
\Aff(\ZZ_2) \to \ZZ/2\ZZ \times \ZZ/2\ZZ \times \ZZ/2\ZZ\,.
$$ 
Let us denote by 
\begin{equation}
\label{Psi}
\Psi : \Aff(\ZZ_2) \to \ZZ/2\ZZ
\end{equation}
the composition of the above homomorphism with the homomorphism from 
$\ZZ/2\ZZ \times \ZZ/2\ZZ \times \ZZ/2\ZZ$  to $\ZZ/2\ZZ$ that sends $(x_1, x_2, x_3)$ to $x_1 + x_3$. 
It is easy to see that $\Psi$ is a continuous map.

Due to the following proposition, the profinite group 
$\displaystyle \lim\big( \ML \big|_{\Dih_2} \big)$ is a clopen subgroup of 
index $2$ in $\Aff(\ZZ_2)$:
\begin{prop}  
\label{prop:lim-ML-Dih2}
The profinite group 
\begin{equation}
\label{Dih-binary}
\lim\big( \ML \big|_{\Dih_2} \big)
\end{equation}
is isomorphic to the kernel of the continuous homomorphism $\Psi : \Aff(\ZZ_2) \to \ZZ/2\ZZ$ in \eqref{Psi}. 
Moreover, this group is the topological closure of the subgroup of $\Aff(\ZZ_2)$ generated 
by\footnote{We identify $\ZZ$ with its image in the ring $\ZZ_2$ of $2$-adic integers.} 
the elements:
$$
(2,1), ~ (1,5), ~ (0, -1) \in \ZZ_2 \rtimes \ZZ_2^{\times}  = \Aff(\ZZ_2).
$$
\end{prop}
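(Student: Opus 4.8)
\textbf{Proof proposal for Proposition \ref{prop:lim-ML-Dih2}.}
The plan is to identify the limit $\lim\big(\ML\big|_{\Dih_2}\big)$ with an explicit clopen subgroup of $\Aff(\ZZ_2)$ by matching it level-by-level with the finite groups $\GT(\K^{(2^{\al})})$. First I would recall from Theorem \ref{thm:GT-Kn-group} (and the discussion following it) that $\GT(\K^{(2^{\al})}) \cong \ti{H}_{\al}$, the index $2$ subgroup of $\ZZ/2^{\al-1}\ZZ \rtimes \big(\ZZ/2^{\al+1}\ZZ\big)^{\times}$ defined in \eqref{tilde-H-alpha}, and that under this identification the reduction homomorphism $\cR_{\K^{(2^{\al+1})},\K^{(2^{\al})}}$ becomes the obvious map induced by the ring surjections $\ZZ/2^{\al}\ZZ \to \ZZ/2^{\al-1}\ZZ$ and $\ZZ/2^{\al+2}\ZZ \to \ZZ/2^{\al+1}\ZZ$. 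Since $\ZZ_2 = \lim_{\al} \ZZ/2^{\al-1}\ZZ$ and $\ZZ_2^{\times} = \lim_{\al} \big(\ZZ/2^{\al+1}\ZZ\big)^{\times}$ (both cofinal systems), taking the limit of the tower $\big(\ti{H}_{\al}\big)_{\al \ge 2}$ along these reduction maps yields a closed subgroup of $\Aff(\ZZ_2) = \ZZ_2 \rtimes \ZZ_2^{\times}$.

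Next I would pin down exactly which subgroup this is. The defining condition of $\ti{H}_{\al}$ is $k \equiv b \bmod 2$ where $\ol{u} = (-\ol{1})^a \ol{5}^b$; this is a condition that only depends on $k \bmod 2$ and on $u \bmod 8$ (since $\lan \ol{5}\ran \subset (\ZZ/8\ZZ)^{\times}$ already has $\ol{5}$ of order $2$, the parity of $b$ is detected at level $\ZZ/8\ZZ$). Concretely, for $(w, v) \in \ZZ_2 \rtimes \ZZ_2^{\times}$, writing $\hcP_{8\ZZ}(v) = (-\ol 1)^a\ol 5^{\,b} \in (\ZZ/8\ZZ)^{\times}$, the coherent family lies in the limit if and only if $w \equiv b \bmod 2$. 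Unwinding the definition of $\Psi$ in \eqref{Psi}: the composite $\Aff(\ZZ_2) \to \ZZ/8\ZZ \rtimes (\ZZ/8\ZZ)^{\times} \to \ZZ/2\ZZ \times \ZZ/2\ZZ \times \ZZ/2\ZZ$ sends $(w,v)$ to $(w \bmod 2,\ a,\ b)$, and then the final map to $\ZZ/2\ZZ$ adds the first and third coordinates, giving $\Psi(w,v) = (w \bmod 2) + b$. Hence $\ker(\Psi) = \{(w,v) : w \equiv b \bmod 2\}$, which is precisely the subgroup described by the limit. Since $\Psi$ is continuous with finite target, $\ker(\Psi)$ is clopen; surjectivity of $\Psi$ (e.g.\ $(1,1) \notin \ker\Psi$) gives index $2$.

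Finally I would check the topological-generation claim. It suffices to show that $(2,1)$, $(1,5)$, $(0,-1)$ all lie in $\ker(\Psi)$ and that their images generate $\ti{H}_{\al}$ in each finite quotient (equivalently, generate a dense subgroup of $\ker\Psi$). For membership: $\Psi(2,1) = 0+0 = 0$ since $2 \equiv 0 \bmod 2$ and $1 = (-1)^0 5^0$; $\Psi(1,5) = 1 + 1 = 0$ since $5 = 5^1$ has $b=1$; $\Psi(0,-1) = 0+0=0$ since $-1 = (-1)^1 5^0$ has $b=0$. For density, I would argue in each $\ti H_\al$: the element $(0,-1)$ together with $(1,5)$ generates the unit part $(\ZZ/2^{\al+1}\ZZ)^{\times} \cong \lan -\ol 1\ran \times \lan \ol 5 \ran$ (using \eqref{ZZ-mod-2alpha-p-1}) in the semidirect-product quotient, and then $(2,1)$ together with $(1,5)\cdot(1,5) = (1 + 5,\, 25)$-type products and conjugates of $(1,5)$ generate the full translation subgroup $\{(\ol k, \ol 1) : k \equiv b\bmod 2\} \cap \ti H_\al$; a short computation using $(0,-1)(1,5)(0,-1) = (-1, 5)$ and $(1,5)(-1,5)^{-1}$-type commutators produces $(2,1)$, closing the loop. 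The main obstacle I anticipate is this last density verification: one must be careful that the elements generate $\ti{H}_\al$ on the nose rather than a proper subgroup, since $\ti H_\al$ is a ``twisted'' index $2$ subgroup and the translation part interacts with the unit part via a nontrivial action; I would handle it by a clean induction on $\al$, using that reduction mod previous level is surjective (Theorem \ref{thm:GTKn-redmap-strong}) so that generation at the top level follows from generation at each finite level plus the standard profinite fact \cite[Corollary 1.1.6]{RZ-profinite}.
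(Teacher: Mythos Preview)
Your proposal is correct and follows essentially the same route as the paper: identify each $\GT(\K^{(2^{\al})})$ with $\ti{H}_{\al}$ via Theorem \ref{thm:GT-Kn-group}, pass to the limit inside $\Aff(\ZZ_2)$, observe that the defining congruence $k\equiv b\bmod 2$ is detected already at the $\ZZ/8\ZZ$ level and hence cuts out exactly $\ker(\Psi)$, and finally prove density by checking that the images of $(2,1),(1,5),(0,-1)$ generate each finite quotient $\ti{H}_{\al}$. The only cosmetic difference is that the paper handles the last step by a direct computation of $(\ol 2,\ol 1)^{t}(\ol 1,\ol 5)^{q}$ rather than your conjugation/commutator sketch, and it invokes the fact that the projection kernels form a neighborhood basis of the identity (equivalent to your appeal to \cite[Corollary 1.1.6]{RZ-profinite}).
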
   
\begin{proof} Due to Proposition \ref{prop:structure-of-Dih}, 
$\K^{(4)} \supsetneq \K^{(8)} \supsetneq  \K^{(16)} \supsetneq \dots$. 
In other words, the assignment 
$$
\al \mapsto \K^{(2^{\al})}
$$
allows us to identify the poset $\Dih_2$ with $\ZZ^{\opp}_{\ge 2}$, where
$\ZZ^{\opp}_{\ge 2}$ is the set $\ZZ_{\ge 2}$ considered with the opposite order. 

Let us denote by $\cF$ the natural functor from the poset $\ZZ^{\opp}_{\ge 2}$
to the category of finite groups that sends $\al$ to the group 
$$
\cF(\al) : = \ZZ/2^{\alpha-1}\ZZ \rtimes \big(\ZZ/2^{\alpha+1}\ZZ \big)^{\times}\,.
$$

The limit $\lim(\cF)$ of $\cF$ is the subgroup of 
\begin{equation}
\label{product-of-semidirect}
\prod_{\al \in \ZZ_{\ge 2}} \ZZ/2^{\alpha-1}\ZZ \rtimes \big(\ZZ/2^{\alpha+1}\ZZ \big)^{\times}
\end{equation}
which consists of functions 
$$
g :  \ZZ_{\ge 2} \to 
\bigsqcup_{\al \in \ZZ_{\ge 2}} \ZZ/2^{\alpha-1}\ZZ \rtimes \big(\ZZ/2^{\alpha+1}\ZZ \big)^{\times}
$$
satisfying the conditions
$$
g(\al) \in   \ZZ/2^{\alpha-1}\ZZ \rtimes \big(\ZZ/2^{\alpha+1}\ZZ \big)^{\times}, \quad \forall~ \al \ge 2, \qquad
\cP_{\al_1, \al_2} \big(g(\al_1) \big) = g(\al_2), \quad \forall~ \al_1 \ge \al_2\,.
$$

It is easy to see that $\Aff(\ZZ_2)$ is the vertex of a natural cone for the 
functor $\cF$. Moreover, the corresponding (continuous) group homomorphism 
$\Aff(\ZZ_2) \to \lim(\cF)$ is an isomorphism of topological groups. For this reason, 
we identify the profinite group $\Aff(\ZZ_2)$ with $\lim(\cF)$. 

Due to Theorem \ref{thm:GT-Kn-group}, $\ML \big|_{\Dih_2}$ is isomorphic to the functor $\ti{\cF}$ 
that sends $\al \in \ZZ_{\ge 2}$ to the following subgroup of $\cF(\al)$:
\begin{equation}
\label{tilde-cF-alpha}
\ti{\cF}(\al) : = \big\{  (\ol{k}, (-\ol{1})^{a} \ol{5}^b) \in 
\ZZ/2^{\alpha-1}\ZZ \rtimes \big(\ZZ/2^{\alpha+1}\ZZ \big)^{\times} ~|~ k \equiv b \mod 2 \big\} ~\le~ \cF(\al).  
\end{equation}

Thus $\lim(\ti{\cF}) \cong \lim\big( \ML \big|_{\Dih_2} \big)$ is the following subgroup of $\lim(\cF) \cong \Aff(\ZZ_2)$:
$$
\{h \in \lim(\cF) ~|~ h(\al) \in \ti{\cF}(\al)\}. 
$$

Note that an element $h \in \lim(\cF)$ belongs to $\lim(\ti{\cF})$ if and only if 
$h(2) \in  \ti{\cF}(2)$. Thus, due to the above identification of $\lim(\cF)$ with 
$\Aff(\ZZ_2)$,  $\lim(\ti{\cF})$ is precisely the kernel of the homomorphism 
$\Psi : \Aff(\ZZ_2) \to \ZZ/2\ZZ$ (see \eqref{Psi}).

Thus the profinite group $\lim\big(  \ML \big|_{\Dih_2} \big) \cong \lim (\ti{\cF})$ is indeed isomorphic 
to the kernel of the homomorphism $\Psi$. 

Since $\Psi$ is continuous and surjective, we conclude that  $\lim\big(  \ML \big|_{\Dih_2} \big)$ is 
a clopen subgroup of index $2$ in $\Aff(\ZZ_2)$. 

It remains to prove that the profinite group in \eqref{Dih-binary} is the topological closure of the 
subgroup 
$$
\lan  (2,1),  (1,5), (0, -1) \ran  \le \Aff(\ZZ_2). 
$$

Since the images of the elements  $(2,1),  (1,5), (0, -1)$ in 
$\ZZ/2^{\alpha-1}\ZZ \rtimes \big(\ZZ/2^{\alpha+1}\ZZ \big)^{\times}$ belong
to $\ti{\cF}(\al)$ for every $\al \in \ZZ_{\ge 2}$, we conclude that  
$(2,1),  (1,5), (0, -1)$ belong to the subgroup 
$\lim\big(  \ML \big|_{\Dih_2} \big) \le \Aff(\ZZ_2)$.

It is easy to see that the kernels of the homomorphisms (for $\al \ge 2$): 
$$
\Aff(\ZZ_2) \to  
\ZZ/2^{\alpha-1}\ZZ \rtimes \big(\ZZ/2^{\alpha+1}\ZZ \big)^{\times}
$$
form a basis of neighborhoods of the identity element $(0, 1) \in \Aff(\ZZ_2)$. 

Thus it suffices to prove that, for every  $\al \ge 2$, the elements 
$(\ol{2},\ol{1}),  (\ol{1},\ol{5}), (\ol{0}, -\ol{1})$ generate the subgroup 
$\ti{\cF}(\al) \le \ZZ/2^{\alpha-1}\ZZ \rtimes \big(\ZZ/2^{\alpha+1}\ZZ \big)^{\times}$ 
(see \eqref{tilde-cF-alpha}). 

This is straightforward. For example, let us prove that, for every pair of odd (positive) integers $k$ and $q$, 
the element $(\ol{k}, \ol{5}^q)$ belongs to the subgroup of 
$\ZZ/2^{\alpha-1}\ZZ \rtimes \big(\ZZ/2^{\alpha+1}\ZZ \big)^{\times}$ 
generated by the elements $(\ol{2},\ol{1}),  (\ol{1},\ol{5}), (\ol{0}, -\ol{1})$. 

It is easy to show that 
$$
(\ol{1},\ol{5})^q = (\ol{1} + \ol{5} + \dots + \ol{5}^{q-1}, \,  \ol{5}^q), 
\qquad
(\ol{2},\ol{1})^t = (\ol{2t}, \ol{1}) 
$$
and 
\begin{equation}
\label{desired}
(\ol{2},\ol{1})^t \,  (\ol{1},\ol{5})^q  =   \big( \ol{2t} + \ol{1} + \ol{5} + \dots + \ol{5}^{q-1}, \,  \ol{5}^q \big).
\end{equation}

Since $1+ 5 + \dots + 5^{q-1}$ and $k$ are odd, there exists an integers $t$ such that 
$$
k = 2t + 1+ 5 + \dots + 5^{q-1}\,.
$$ 
Thus \eqref{desired} implies that the element $(\ol{k}, \ol{5}^q)$ indeed belongs 
to the subgroup of $\ZZ/2^{\alpha-1}\ZZ \rtimes \big(\ZZ/2^{\alpha+1}\ZZ \big)^{\times}$ 
generated by the elements $(\ol{2},\ol{1}),  (\ol{1},\ol{5}), (\ol{0}, -\ol{1})$.

Using similar arguments, one can show that all the remaining elements 
of $\ti{\cF}(\al)$ belong to the subgroup of 
$\ZZ/2^{\alpha-1}\ZZ \rtimes \big(\ZZ/2^{\alpha+1}\ZZ \big)^{\times}$ generated 
by the elements $(\ol{2},\ol{1}),  (\ol{1},\ol{5}), (\ol{0}, -\ol{1})$.
\end{proof}

\appendix

\section{Proof of Proposition \ref{prop:Vadym}}
\label{app:commutator-subgroup}
In this appendix we present the proof of technical Proposition \ref{prop:Vadym}.

Recall that $n \in \ZZ_{\ge 3}$ and $G_n$ is the subgroup of $D_n \times D_n \times D_n$ generated 
by elements $\ol{x} := (r,s,s)$ and $\ol{y} := (rs, r, rs)$. Our goal is to prove that 
\begin{equation}
\label{comm-subgrp-of-Gn}
[G_n, G_n] =  
\{ (r^{2 n_1}, r^{2 n_2}, r^{2 n_3}) ~|~
(n_1, n_2, n_3) \in (2\ZZ)^3 ~\txt{ or }~
(n_1, n_2, n_3) \in (2\ZZ+1)^3 \}. 
\end{equation}

It is easy to see that the subset 
$$
C_n :=  \{(r^{2 n_1}, r^{2 n_2}, r^{2 n_3}) ~|~ (n_1, n_2, n_3) \in (2\ZZ)^3 ~\txt{ or }~
(n_1, n_2, n_3) \in (2\ZZ+1)^3\} ~\subset~ G_n
$$
is a (normal) subgroup of $G_n$.

Since $G_n$ is generated by two elements and the commutator subgroup $[\F_2, \F_2]$
of $\F_2$ is generated by elements of the form 
\begin{equation}
\label{generators-comm-F2}
[x^t, y^h] = x^t y^h x^{-t} y^{-h}\,, \qquad t,h \in \ZZ, 
\end{equation}
we conclude that $[G_n,G_n]$ is generated by the elements 
\begin{equation}
\label{generators-comm-G}
[\ol{x}^t, \ol{y}^h], \qquad t, h \in \ZZ.  
\end{equation}

We need to consider 4 cases: $t, h$ are even, $t$ is even and $h$ is odd, 
$t$ is odd and $h$ is even, $t, h$ are odd. 

If $t, h$ are even, then $[\ol{x}^t, \ol{y}^h] = (1,1,1)$. 

If $t$ is odd and $h$ is even, then we get 
$$
\ol{x}^t \ol{y}^h \ol{x}^{-t} \ol{y}^{-h} = 
(r^t, s, s) (1, r^h,1) (r^{-t}, s, s) (1,r^{-h}, 1) = (1, [s, r^h], 1) = (1, r^{-2h}, 1).
$$

If $t$ is even and $h$ is odd, then we get 
$$
\ol{x}^t \ol{y}^h \ol{x}^{-t} \ol{y}^{-h} = 
(r^t, 1, 1) (rs, r^h, rs) (r^{-t}, 1, 1) (rs, r^{-h}, rs) = ([r^t, rs], 1,1) = (r^{2t},1,1). 
$$

Finally, if $t$ is odd and $h$ is odd, then we get 
$$
\ol{x}^t \ol{y}^h \ol{x}^{-t} \ol{y}^{-h} = 
(r^t, s, s) (rs, r^h, rs) (r^{-t}, s, s) (rs, r^{-h}, rs) 
$$
$$
= ([r^t, rs], [s, r^h], [s, rs]) = (r^{2t}, r^{-2h}, r^{-2}).
$$

Thus, we conclude that $[G_n,G_n]$ is generated by elements of the form 
\begin{equation}
\label{generators-commG}
\begin{array}{c}
(1, r^{2t}, 1), \qquad  (r^{2t},1,1), \qquad t \in 2\ZZ, \\[0.18cm]
(r^{2n_1}, r^{2n_2}, r^{2}) \qquad n_1, n_2 \in 2\ZZ + 1. 
\end{array}
\end{equation}

Due to this observation, $(1,1,r^4) \in [G_n,G_n]$ and hence 
$$
(1,1, r^{2t}) \in [G_n,G_n], \qquad \forall~ t \in 2 \ZZ.
$$ 
Moreover, $(r^2, r^2, r^2) \in [G_n,G_n]$. 

Since 
$$
(r^{2t},1,1), ~ (1, r^{2t},1), ~ (1,1, r^{2t}) \in [G_n,G_n] 
\qquad \forall ~ t \in 2\ZZ,
$$
and $(r^2, r^2, r^2) \in [G_n,G_n]$, we conclude that $C_n \subset [G_n, G_n]$. 

Since the elements in \eqref{generators-commG} belong to $C_n$, we also have 
the inclusion $[G_n,G_n] \subset C_n$. 
 
Thus the commutator subgroup $[G_n, G_n]$ coincides with $C_n$ and 
Proposition \ref{prop:Vadym} is proved.  \qed

\bigskip

\noindent\textsc{Department of Pure Mathematics and Mathematical Statistics,\\ 
University of Cambridge, Cambridge, \\
England, CB3 0WB, United Kingdom\\
\emph{E-mail address:} {\bf ib538@cam.ac.uk}}

\bigskip

\noindent\textsc{Department of Mathematics,\\
Temple University, \\
Wachman Hall Rm. 638\\
1805 N. Broad St.,\\
Philadelphia PA, 19122 USA \\
\emph{E-mail address:} {\bf vald@temple.edu}}

\bigskip

\noindent\textsc{Department of Mathematics,\\
Massachusetts Institute of Technology\\
77 Massachusetts Ave.\\
Cambridge, MA 02139 USA\\
\emph{E-mail address:} {\bf holikov@mit.edu}}

\bigskip

\noindent\textsc{University of Cambridge, Trinity College,\\
Cambridge, England, CB2 1TQ, United Kingdom\\
\emph{E-mail address:} {\bf vapash27@gmail.com}}

\end{document}